  \newtheorem{proposition}{Proposition}[section]
  \newtheorem{lemma}[proposition]{Lemma}
  \newtheorem{corollary}[proposition]{Corollary}
  \newtheorem{theorem}[proposition]{Theorem}
  \theoremstyle{definition}
  \newtheorem{definition}[proposition]{Definition}
  \theoremstyle{remark}
  \newtheorem{remark}[proposition]{Remark}
\begin{document}

\title{Covariant Bimodules Over Monoidal Hom-Hopf Algebras}
\author{SERKAN KARA\c{C}UHA}
\address{Department of Mathematics, FCUP, University of Porto, Rua Campo Alegre
687, 4169-007 Porto, Portugal}
\email{s.karacuha34@gmail.com, karacuha@itu.edu.tr}
\keywords{adjoint Hom-actions, Hom-module algebra, covariant Hom-bimodules, Hom-Yetter-Drinfel'd modules}

\begin{abstract}
Covariant Hom-bimodules are introduced and the structure theory of them in the Hom-setting is studied in a detailed way.
The category of bicovariant Hom-bimodules is proved to be a (pre)braided monoidal category and its structure theory is also provided in coordinate form. The notion of Hom-Yetter-Drinfel'd modules is presented and it is shown that the category of Hom-Yetter-Drinfel'd modules is a (pre)braided tensor category as well. As one of the main results, a (pre)braided monoidal equivalence between these tensor categories is verified, which extends the fundamental theorem of Hom-Hopf modules.
\end{abstract}

\maketitle
\section{Introduction}
Covariant bimodules have been studied in \cite{Woronowicz} to construct differential calculi on Hopf algebras over a field $k$. The concept of bicovariant bimodule (or Hopf bimodule) in \cite{Woronowicz} is considered as Hopf algebraic analogue to the notion of vector fibre bundle over a Lie group equipped with the left and right actions of the group, that is, the object analogous to the module of $1$-forms is the bicovariant bimodule which is an $H$-bimodule and an $H$-bicomodule satisfying Hopf module compatibility conditions between each of $H$-actions and each of $H$-coactions. The structure theory of covariant bimodules in a coordinate-free setting was introduced in \cite{Schauenburg}, where bicovariant bimodules are termed two-sided two-cosided Hopf modules; see also \cite{KlimykSchmudgen} for a survey of the theory in both abstract Hopf algebra language and coordinate form. With regard to knot theory and solutions of the quantum Yang-Baxter equation, the notion of a Yetter Drinfel'd module over a bialgebra  $H$ has been investigated profoundly in \cite{Yetter,RadfordTowber}, where it is defined as an $H$-module and an $H$-comodule with a compatibility condition different than the one describing a Hopf module. One of the most essential features in \cite{Yetter,RadfordTowber} is the fact that Yetter-Drinfel'd modules over a bialgebra $H$ constitute a prebraided monoidal (=tensor) category which is braided monoidal one if $H$ is a Hopf algebra with an invertible antipode. For a symmetric tensor category admitting (co-)equalizers the main result (Thm. $5.7$) in \cite{Schauenburg} expresses that the structure theorem of Hopf modules extends to an equivalence between the category of bicovariant bimodules and the category of Yetter-Drinfel'd modules over an Hopf algebra $H$. If these categories are endued with monoidal structures specific to each one, the aforementioned equivalence is braided monoidal as well, in case $H$ has a bijective antipode.

Hom-type algebras have been introduced in the form of Hom-Lie algebras in \cite{HartwigLarssonSilvestrov}, where the Jacobi identity were twisted along a linear endomorphism. This Hom-type generalization of algebras appeared as a part of the study of discretizations and deformations of vector fields and differential calculi (see \cite{AmmarMakhlouf,Hu,LarssonSilvestrov,LarssonSilvestrov1,LarssonSilvestrov2,Liu,RichardSilvestrov}) regarding the effort to deform the Witt and the Virasoro algebras. In the meantime, Hom-associative algebras have been introduced in \cite{MakhloufSilvestrov} to give rise to a Hom-Lie algebra using the commutator bracket. Other Hom-type structures such as Hom-coalgebras, Hom-bialgebras, Hom-Hopf algebras and their properties have been considered in \cite{MakhloufSilvestrov1,MakhloufSilvestrov2,Yau}. Definitions of Hom-bialgebra and Hom-Hopf algebra proposed in \cite{MakhloufSilvestrov2} contain two different endomorphisms governing the Hom-associativity and Hom-coassociativity, and in \cite{Yau} the two endomorphisms are the same to twist both associativity and coassociativity. One of the main approaches to provide a Hom-type generalization of algebras is the so-called \emph{twisting principle} which has been first suggested in \cite{Yau1}. It has since then been used to construct Hom-type objects and related algebraic structures from the classical ones and appropriate endomorphisms; for instance see \cite{AmmarMakhlouf,FregierGohr,Gohr,MakhloufPanaite,Yau2,Yau3,Yau4,Yau5}. Another important approach has been developed in a framework of tensor categories in \cite{CaenepeelGoyvaerts}, where the authors constructed a symmetric monoidal category $\mathcal{\widetilde{H}(C)}$ for a monoidal category $\mathcal{C}$ such that the associativity constraints are non-trivial by comprising automorphisms and their inverses of the related objects. By putting  $\mathcal{C}=\mathcal{M}_k$, the category of modules over a commutative ring $k$, the algebras, coalgebras, bialgebras, Hopf and Lie algebras in $\widetilde{\mathcal{H}}(\mathcal{M}_k)$ match up with their Hom-type counterparts with slight variations. These are called \emph{monoidal} Hom-algebras, Hom-coalgebras, etc. In Section 3 of \cite{CaenepeelGoyvaerts} a generalization of the fundamental theorem of Hopf modules has also been given in the Hom-setting.

In the present paper we extend the fundamental theorem of Hom-Hopf modules to a (pre-)braided monoidal equivalence between the category $^{H}_{H}\widetilde{\mathcal{H}}(\mathcal{M}_k)_H^H$ of bicovariant Hom-bimodules and the category $\widetilde{\mathcal{H}}(\mathcal{YD})^{H}_H$ of right-right Hom-Yetter-Drinfel'd modules over a monoidal Hom-Hopf algebra $(H,\alpha)$, where we equip the category $^{H}_{H}\widetilde{\mathcal{H}}(\mathcal{M}_k)_H^H$ with the tensor product over $H$, which is defined by a coequalizer modified by the related automorphisms and their inverses, and the category $\widetilde{\mathcal{H}}(\mathcal{YD})^{H}_H$ with the tensor product over a commutative ring $k$ with diagonal action and codiagonal coaction. In the meanwhile, we generalize the structure theory of covariant bimodules to the Hom-setting, and by considering the connection between bicovariant bimodules and Yetter-Drinfel'd modules over a Hopf algebra in the Hom-context we propose the notion of Hom-Yetter-Drinfeld'd modules. Below one sees how the rest of the paper proceeds.

In Section 2, we review definitions of some algebraic objects like algebra, coalgebra, bialgebra and Hopf algebra in the tensor category $\mathcal{\widetilde{H}(C)}$; moreover we recall basic definitions and propositions about monoidal Hom-structures and the structure theorem of Hom-Hopf modules. In Section 3, we consider definitions of Hopf modules, covariant bi(co)modules and Yetter-Drinfel'd modules within the tensor category $\mathcal{\widetilde{H}(C)}$. We then prove, in terms of commutation relations, some results regarding bijections between right (co)module structures on an object and right (co)module structures on the corresponding free left module turning it into a certain kind of covariant bimodule, e.g. the result stating the existence of a one-to-one correspondence between right module structures on the object $(V,\nu)$ and right module structures making $(H\otimes V,\alpha\otimes\nu)$ a left-covariant bimodule (see \cite{Schauenburg} for the classical version of these results). In Section 4 (Section 5, Section 6), we introduce the notion of left-covariant Hom-bimodules (right-covariant Hom-bimodules, bicovariant Hom-bimodules) to have a twisted, generalized version of the concept of left(right-, bi)-covariant bimodules, and furthermore we show that the category of left-covariant Hom-bimodules (right-covariant and bicovariant Hom-bimodules) is a tensor category. The structures of left-covariant and bicovariant Hom-bimodules are given in coordinate forms. What we additionally get in Section 6 is the fact that the category of bicovariant Hom-bimodules is (pre-)braided monoidal category. In Section 7, we present the definition of a Hom-Yetter-Drinfel'd module as a deformation of the classical one and prove that the category of Hom-Yetter-Drinfel'd modules is a (pre)braided tensor category. In \cite{MakhloufPanaite}, the twisting principle has been used to study Yetter-Drinfel'd modules over Hom-bialgebras; since different approaches have been used, the definitions and results obtained in this paper vary from the ones in that reference. On the other hand, the compatibility condition for Hom-Yetter-Drinfel'd modules acquired here is equivalent to the ones in \cite{ChenZhang} and \cite{LiuShen}. As the main consequence of the section, we demonstrate that the category of Hom-Yetter-Drinfel'd modules is braided monoidal equivalent to the category of bicovariant Hom-bimodules, which is among the essential purposes of this paper.

\section{Preliminaries}
Throughout the paper, $k$ denotes a fixed commutative ring and $\otimes$ means tensor product over $k$, i.e., $\otimes_k$. For a $k$-coalgebra $C$ and a right $k$-comodule $M$ over $C$, we drop the summation sign in the Sweedler's notation and write $\Delta:C\to C\otimes C,\;c\mapsto c_1\otimes c_2$ for the comultiplication and $\rho:M\to M\otimes C,\;m\mapsto m_{(0)}\otimes m_{(1)}$ for the coaction, respectively. For a solid and useful knowledge about (braided) tensor categories one can see the references \cite{Kassel} and \cite{MacLane}. One can find detailed explanations and proofs for the following definitions and propositions in \cite{CaenepeelGoyvaerts}.

 We associate to a category $\mathcal{C}$ a new category $\mathcal{H(C)}$ whose objects are ordered pairs $(A,\alpha)$, with $A\in\mathcal{C}$ and $\alpha\in Aut_\mathcal{C}(A)$, and morphisms $f:(A,\alpha)\to(B,\beta)$ are morphisms $f:A\to B$ in $\mathcal{C}$ satisfying
 \begin{equation}\beta\circ f=f\circ\alpha.\end{equation} This category is termed \emph{Hom-category} associated to $\mathcal{C}$.
If $\mathcal{C}=(\mathcal{C},\otimes,I,a,l,r)$ is a monoidal category then so is $\mathcal{H(C)}=(\mathcal{H(C)},\otimes,(I,I),a,l,r)$ with the tensor product
 \begin{equation}\label{Hom-tensor-prod}(A,\alpha)\otimes(B,\beta)=(A\otimes B,\alpha\otimes\beta )\end{equation}
 for $(A,\alpha)$ and $(B,\beta)$ in $\mathcal{H(C)}$, and the tensor product of morphisms is given by the tensor product of morphisms in $\mathcal{C}$. With the following proposition we have a modified version of the category $\mathcal{H(C)}$.

 \begin{proposition}If \: $\mathcal{C}=(\mathcal{C},\otimes,I,a,l,r)$ is a monoidal category, then $\mathcal{\widetilde{H}(C)}=(\mathcal{H(C)},\otimes,(I,I),\tilde{a},\tilde{l},\tilde{r})$ with the tensor product given by (\ref{Hom-tensor-prod}), with the associativity constraint $\tilde{a}$ defined by
 \begin{equation}\tilde{a}_{A,B,C}=a_{A,B,C}\circ((\alpha\otimes id_B)\otimes \gamma^{-1})=(\alpha\otimes (id_B\otimes \gamma^{-1}))\circ a_{A,B,C},\end{equation} for $(A,\alpha)$, $(B,\beta)$, $(C,\gamma)$ $\in$ $\mathcal{H(C)}$, and the right and left unit constraints $\tilde{r}$, $\tilde{l}$ given by
  \begin{equation}\tilde{r}_A=\alpha\circ r_A=r_A\circ(\alpha\otimes I);\:\: \tilde{l}_A=\alpha\circ l_A=l_A\circ(I\otimes\alpha), \end{equation}
  is also a monoidal category.
  On the other hand, if $\mathcal{C}=(\mathcal{C},\otimes,I,a,l,r,c)$ is a braided monoidal category, then $\mathcal{\widetilde{H}(C)}=(\mathcal{H(C)},\otimes,(I,I),\tilde{a},\tilde{l},\tilde{r},c)$ is also a braided monoidal category with the same commutativity constraint $c$.
 \end{proposition}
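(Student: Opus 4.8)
The plan is to verify that the stated data define a genuine monoidal category by reducing each axiom to its counterpart in $\mathcal{C}$, the recurring tools being the naturality of the constraints $a,l,r$ (and later $c$) in $\mathcal{C}$ together with the bifunctoriality of $\otimes$. First I would record that $\otimes$ is a well-defined bifunctor on $\mathcal{H}(\mathcal{C})$: on objects $\alpha\otimes\beta$ is an automorphism of $A\otimes B$ with inverse $\alpha^{-1}\otimes\beta^{-1}$, and if $f,g$ are morphisms in $\mathcal{H}(\mathcal{C})$ then $(\alpha'\otimes\beta')(f\otimes g)=(f\alpha)\otimes(g\beta)=(f\otimes g)(\alpha\otimes\beta)$, so $f\otimes g$ is again a morphism in $\mathcal{H}(\mathcal{C})$.

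Next I would treat the constraints. The two displayed expressions for $\tilde{a}_{A,B,C}$ coincide by the naturality of $a$ applied to $\alpha,id_B,\gamma^{-1}$, and likewise the two forms of $\tilde{l},\tilde{r}$ coincide by naturality of $l,r$. The same naturality shows each constraint intertwines the twisting automorphisms: for instance both $(\alpha\otimes(\beta\otimes\gamma))\circ\tilde{a}_{A,B,C}$ and $\tilde{a}_{A,B,C}\circ((\alpha\otimes\beta)\otimes\gamma)$ reduce to $a_{A,B,C}\circ((\alpha^2\otimes\beta)\otimes id_C)$, so $\tilde{a}_{A,B,C}$ is a morphism in $\mathcal{H}(\mathcal{C})$; being a composite of isomorphisms it is invertible. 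Naturality of the families $\tilde{a},\tilde{l},\tilde{r}$ in $\mathcal{H}(\mathcal{C})$ then follows by combining the intertwining relations $\alpha' f=f\alpha$ for morphisms (which give $(\gamma')^{-1}h=h\gamma^{-1}$, etc.) with bifunctoriality and the naturality of $a,l,r$.

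The heart of the proof, and the step I expect to be the main obstacle, is the pentagon for $\tilde{a}$. Here I would substitute $\tilde{a}=a\circ(\text{twist})$ everywhere and repeatedly apply the naturality of $a$ to slide every twisting automorphism to the far right, past all the structural maps $a$; on each side this leaves exactly the composite $(id_A\otimes a_{B,C,D})\circ a_{A,B\otimes C,D}\circ(a_{A,B,C}\otimes id_D)$ respectively $a_{A,B,C\otimes D}\circ a_{A\otimes B,C,D}$, followed by an accumulated twist of the source object $((A\otimes B)\otimes C)\otimes D$. The pentagon for $a$ in $\mathcal{C}$ equates the two structural composites, and a short bifunctoriality computation shows both accumulated twists equal $((\alpha^2\otimes\beta)\otimes\gamma^{-1})\otimes\delta^{-2}$; hence the pentagon holds. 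The triangle identity is obtained by the same slide-and-match technique, now much shorter: after using the triangle for $a,l,r$ one is left with $r_A\circ(\alpha\otimes id_I)=\tilde{r}_A$ on one tensor factor and $\beta\circ\beta^{-1}=id$ on the other.

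Finally, for the braided case, naturality of $c$ in $\mathcal{C}$ with the pair $(\alpha,\beta)$ is precisely the assertion that $c_{A,B}$ is a morphism $(A\otimes B,\alpha\otimes\beta)\to(B\otimes A,\beta\otimes\alpha)$ in $\mathcal{H}(\mathcal{C})$, and naturality of the whole family is inherited from $\mathcal{C}$. The two hexagon axioms for the pair $(\tilde{a},c)$ are then verified by the identical strategy used for the pentagon: substitute the twisted associators, use the naturality of both $a$ and $c$ to move all twisting automorphisms to one side, invoke the corresponding hexagon for $(a,c)$ in $\mathcal{C}$, and observe that the leftover twists on the two sides coincide (both reduce to $(id_A\otimes\beta)\otimes\gamma^{-1}$). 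This establishes that $\widetilde{\mathcal{H}}(\mathcal{C})$ is (braided) monoidal.
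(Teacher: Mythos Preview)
Your proposal is correct and follows the standard route: reduce each coherence axiom for $\tilde a,\tilde l,\tilde r$ (and $c$) to the corresponding axiom in $\mathcal{C}$ by using naturality of $a,l,r,c$ to slide all twisting automorphisms to one end, then check that the residual twists on the two sides agree. Your specific computations of these residual twists (e.g.\ $((\alpha^{2}\otimes\beta)\otimes\gamma^{-1})\otimes\delta^{-2}$ for the pentagon and $(id_A\otimes\beta)\otimes\gamma^{-1}$ for the first hexagon) are correct.

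As for the comparison: the present paper does not give its own proof of this proposition. It is stated as background with the remark that detailed proofs can be found in \cite{CaenepeelGoyvaerts}, and later (in the proof of the associator proposition for left-covariant Hom-bimodules) the pentagon and naturality of $\tilde a$ are again referred to Prop.~1.1 of that reference. Your argument is exactly the kind of verification carried out there, so there is no substantive difference in approach to report.
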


\begin{proposition}Let $\mathcal{C}$ be a monoidal category. Then the functor $(F,\varphi_0,\varphi_2):\mathcal{H(C)}\to\mathcal{\widetilde{H}(C)} $ defined as $F:\mathcal{H(C)}\to\mathcal{H(C)}$ identity functor, $\varphi_0:I\to I$ the identity, and for $(A,\alpha)$,$(B,\beta)$ $\in$ $\mathcal{H(C)}$, $$\varphi_2(A,B)=\alpha\otimes\beta:F(A)\otimes F(B)=F(A\otimes B)\to F(A\otimes B)=A\otimes B,$$
is strong monoidal. Hence, the monoidal categories $\mathcal{H(C)}$ and $\mathcal{\widetilde{H}(C)}$ are tensor isomorphic. If $\mathcal{C}$ is a braided monoidal category, the tensor functor $(F,\varphi_0,\varphi_2)$ is an isomorphism of braided tensor categories.
\end{proposition}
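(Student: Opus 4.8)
The plan is to verify directly the three coherence axioms defining a strong monoidal functor for the triple $(F,\varphi_0,\varphi_2)$, and then to deduce both the tensor isomorphism and its braided refinement with essentially no further work. First I would check that $\varphi_2$ is a natural isomorphism with values in $\mathcal{\widetilde{H}(C)}$. For $(A,\alpha),(B,\beta)\in\mathcal{H(C)}$ the map $\varphi_2(A,B)=\alpha\otimes\beta$ is an endomorphism of $(A\otimes B,\alpha\otimes\beta)$ (it trivially commutes with $\alpha\otimes\beta$, so the morphism condition (2.1) holds), and it is invertible with inverse $\alpha^{-1}\otimes\beta^{-1}$ because $\alpha,\beta\in Aut_{\mathcal{C}}$. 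Naturality is again exactly (2.1): for $f:(A,\alpha)\to(A',\alpha')$ and $g:(B,\beta)\to(B',\beta')$ the interchange law gives $(\alpha'\otimes\beta')\circ(f\otimes g)=(\alpha'\circ f)\otimes(\beta'\circ g)=(f\circ\alpha)\otimes(g\circ\beta)=(f\otimes g)\circ(\alpha\otimes\beta)$.

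The one substantive computation is the associativity hexagon. Since $F$ is the identity functor, the source associator is $a$ and the target associator is $\tilde{a}$, so the axiom to verify reads
\[
a_{A,B,C}\circ\varphi_2(A\otimes B,C)\circ(\varphi_2(A,B)\otimes id_C)=\varphi_2(A,B\otimes C)\circ(id_A\otimes\varphi_2(B,C))\circ\tilde{a}_{A,B,C}.
\]
Inserting $\varphi_2(A\otimes B,C)=(\alpha\otimes\beta)\otimes\gamma$ and $\varphi_2(A,B)\otimes id_C=(\alpha\otimes\beta)\otimes id_C$, the interchange law collapses the left-hand side to $a_{A,B,C}\circ((\alpha^2\otimes\beta^2)\otimes\gamma)$. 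For the right-hand side I would first compose $\varphi_2(A,B\otimes C)\circ(id_A\otimes\varphi_2(B,C))=\alpha\otimes(\beta^2\otimes\gamma^2)$, then push this past $a_{A,B,C}$ using naturality of the associativity constraint $a$, obtaining $a_{A,B,C}\circ((\alpha\otimes\beta^2)\otimes\gamma^2)$; composing with $\tilde{a}_{A,B,C}=a_{A,B,C}\circ((\alpha\otimes id_B)\otimes\gamma^{-1})$ and applying the interchange law once more yields $a_{A,B,C}\circ((\alpha^2\otimes\beta^2)\otimes\gamma)$ as well, the cancellation $\gamma^2\circ\gamma^{-1}=\gamma$ being precisely where the inverse built into $\tilde{a}$ is consumed. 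Hence both sides coincide.

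The two unit axioms are immediate. With $\varphi_0=id_I$ one has $\varphi_2(I,A)\circ(\varphi_0\otimes id_A)=id_I\otimes\alpha$ and $\varphi_2(A,I)\circ(id_A\otimes\varphi_0)=\alpha\otimes id_I$, so the left and right unit axioms become $l_A\circ(id_I\otimes\alpha)=\tilde{l}_A$ and $r_A\circ(\alpha\otimes id_I)=\tilde{r}_A$, which are literally the definitions of $\tilde{l}_A$ and $\tilde{r}_A$ from the previous proposition. This establishes that $(F,\varphi_0,\varphi_2)$ is strong monoidal. The tensor isomorphism is then formal: $F$ is the identity on the underlying category $\mathcal{H(C)}$, hence an isomorphism of categories, and $\varphi_0,\varphi_2$ are isomorphisms, so $\mathcal{H(C)}$ and $\mathcal{\widetilde{H}(C)}$ are isomorphic as monoidal categories.

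For the braided case, the preceding proposition endows both categories with the same braiding $c$, so the braided-functor axiom (again with $F$ the identity) reduces to $(\beta\otimes\alpha)\circ c_{A,B}=c_{A,B}\circ(\alpha\otimes\beta)$; this is exactly the naturality square of $c$ applied to the endomorphisms $\alpha$ and $\beta$, so it holds automatically and the isomorphism upgrades to one of braided tensor categories. I expect the associativity hexagon to be the only genuine obstacle, and even there the difficulty is purely bookkeeping: tracking the powers of $\alpha,\beta,\gamma$ and recognizing that naturality of $a$ together with the $\gamma^{-1}$ hidden in $\tilde{a}$ is exactly what forces the two sides to agree.
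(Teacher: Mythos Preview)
Your proof is correct. The paper itself does not supply a proof of this proposition: it is stated in the preliminaries with the blanket remark that ``detailed explanations and proofs for the following definitions and propositions'' can be found in \cite{CaenepeelGoyvaerts}. Your direct verification of the monoidal-functor axioms is exactly the standard argument one would give, and each step checks out: the naturality of $\varphi_2$ via condition (2.1), the hexagon via naturality of $a$ together with the $\gamma^{-1}$ built into $\tilde a$, the unit axioms as literal restatements of the definitions of $\tilde l$ and $\tilde r$, and the braided axiom as naturality of $c$. The only cosmetic point is that in your hexagon paragraph the phrase ``composing with $\tilde a_{A,B,C}$'' appears \emph{after} you have already used the $a_{A,B,C}$ part of $\tilde a$; it would read more cleanly to first substitute $\tilde a_{A,B,C}=a_{A,B,C}\circ((\alpha\otimes id_B)\otimes\gamma^{-1})$ into the right-hand side and then apply naturality of $a$ once. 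The computation itself is unaffected.
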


Let $\mathcal{C}$ be a monoidal category; we describe algebras and coalgebras in $\mathcal{\widetilde{H}(C)}$ as follows
\begin{definition}An algebra $\tilde{A}$ and a coalgebra $\tilde{C}$ in $\mathcal{\widetilde{H}(C)}$ are of the following forms, respectively,
\begin{enumerate}
\item $\tilde{A}=(A,\alpha,\tilde{m}_A,\eta_A)$, where $\tilde{m}_A=m_A\circ(\alpha\otimes\alpha)=\alpha\circ m_A$, $(A,m_A,\eta_A)$ is an algebra in $\mathcal{C}$, and $\alpha$ is an algebra automorphism of $A$.
    \item $\tilde{C}=(C,\gamma,\tilde{\Delta},\varepsilon_C)$, where $\tilde{\Delta}_C=(\gamma^{-1}\otimes\gamma^{-1})\circ\Delta_C=\Delta_C\circ\gamma^{-1}$, $(C,\Delta_C,\varepsilon_C)$ is a coalgebra in $\mathcal{C}$, and $\gamma$ is a coalgebra automorphism of $C$.
\end{enumerate}
\end{definition}

\begin{definition}We now describe modules and comodules on algebras $\tilde{A}$ and coalgebras $\tilde{C}$ in $\mathcal{\widetilde{H}(C)}$, respectively, hereinbelow.
\begin{enumerate}
\item A left $\tilde{A}$-module comprises an object $(M,\mu)$ $\in$ $\mathcal{\widetilde{H}(C)}$ together with a morphism $\phi:A\otimes M\to M$ in $\mathcal{\widetilde{H}(C)}$, satisfying the commutation relations $$\phi\circ(id_A\otimes\phi)\circ\tilde{a}_{A,A,M}=\phi\circ(\tilde{m}_A\otimes id_M);\: \phi\circ(\eta_A\otimes id_M)=\tilde{l}_M,$$
    where $\phi$ is said to be a left action of $\tilde{A}$ on $(M,\mu)$. Let $(N,\nu)$ be another left $\tilde{A}$-module with the action $\varphi:A\otimes N\to N$. Then a morphism $f:(M,\mu)\to(N,\nu)$ in $\mathcal{\widetilde{H}(C)}$ is called left $\tilde{A}$-linear if the relation $f\circ\phi=\varphi\circ(id_A\otimes f)$ holds. We indicate by $_A\mathcal{\widetilde{H}(M)}$ the category of left $\tilde{A}$-modules and left $\tilde{A}$-linear morphisms.

    \item A left $\tilde{C}$-comodule consists of an object $(M,\mu)$ $\in$ $\mathcal{\widetilde{H}(C)}$ together with a morphism $\rho:M \to C\otimes M$ in $\mathcal{\widetilde{H}(C)}$ fulfilling the following conditions
        $$\tilde{a}_{C,C,M}\circ(\tilde{\Delta}_C\otimes id_M)\circ\rho=(id_A\otimes \rho)\circ\rho;\: (\varepsilon_C\otimes id_M)\circ\rho=\tilde{l}^{-1}_M,$$
        where $\rho$ is called a left coaction of $\tilde{C}$ on $(M,\mu)$. A morphism $f:(M,\mu)\to(N,\nu)$ in $\mathcal{\widetilde{H}(C)}$ is termed left $\tilde{C}$-colinear if the condition $\sigma\circ f=(id_C\otimes f)\circ\rho,$ where $\sigma:N\to C\otimes N$ is the left coaction of $\tilde{C}$ on $(N,\nu)$. $^C\mathcal{\widetilde{H}(M)}$ denotes the category of left $\tilde{C}$-comodules and left $\tilde{C}$-linear morphisms.
\end{enumerate}
\end{definition}

Let $\mathcal{C}$ now be a braided tensor category. Then we have
\begin{definition} A bialgebra $\tilde{H}$ in $\mathcal{\widetilde{H}(C)}$ is the sextuple $\tilde{H}=(H,\alpha,\tilde{m}_H,\eta_H,\tilde{\Delta}_H,\varepsilon_H)$, where $\tilde{m}_H=m_H\circ(\alpha\otimes\alpha)=\alpha\circ m_H$, $\tilde{\Delta}_H=(\alpha^{-1}\otimes\alpha^{-1})\circ\Delta_H=\Delta_H\circ\alpha^{-1}$, $(H,m_H,\eta_H,\Delta_H,\varepsilon_H)$ is a bialgebra in $\mathcal{C}$, and $\alpha$ is a bialgebra automorphism of $H$. $\tilde{H}$ is a Hopf algebra in $\mathcal{\widetilde{H}(C)}$ if $H$ is a Hopf algebra in $\mathcal{C}$.
\end{definition}

If we take $\mathcal{C}$ as the category $\mathcal{M}_k$ of $k$-modules, where $k$ is a commutative ring, then the associativity and the unit constraints are given by $$\tilde{a}_{A,B,C}((a\otimes b)\otimes c)=\alpha(a)\otimes(b\otimes\gamma^{-1}(c)), $$
$$\tilde{l}_A(x\otimes a)=x\alpha(a)=\tilde{r}_A(a\otimes x).$$

\begin{definition} An algebra in $\widetilde{\mathcal{H}}(\mathcal{M}_k)$ is called a monoidal Hom-algebra and a coalgebra in
$\widetilde{\mathcal{H}}(\mathcal{M}_k)$ is termed a monoidal Hom-coalgebra, that is, respectively,
\begin{enumerate}
\item A monoidal Hom-algebra is an object $(A,\alpha) \in \widetilde{\mathcal{H}}(\mathcal{M}_k)$ together with a $k$-linear map $m_A:A\otimes A\to A,\: a\otimes b\mapsto ab=m_A(a\otimes b)$ and an element $1_A \in A$ such that
\begin{equation}\alpha(ab)=\alpha(a)\alpha(b)\:\: ;\:\: \alpha(1_A)=1_A,\end{equation}
\begin{equation}\label{Hom-associativity_Weak_unitality}\alpha(a)(bc)=(ab)\alpha(c)\:\: ;\:\: a1_A=1_Aa=\alpha(a),\end{equation}
for all $a,b,c \in A$.
\item A monoidal Hom-coalgebra is an object $(C,\gamma) \in \widetilde{\mathcal{H}}(\mathcal{M}_k)$ together with $k$-linear maps $\Delta:C\to C\otimes C,\:\Delta(c)=c_1\otimes c_2$ and $\varepsilon :C\to k$ such that
    \begin{equation}\Delta(\gamma(c))=\gamma(c_1)\otimes \gamma(c_2)\:;\: \varepsilon(\gamma(c))=\varepsilon(c),\end{equation}
    \begin{equation}\label{Hom_coassociativity_Weak_counitality}\gamma^{-1}(c_1)\otimes \Delta(c_2)=c_{11}\otimes(c_{12}\otimes \gamma^{-1}(c_2))\:;\: c_1\varepsilon(c_2)=\gamma^{-1}(c)=\varepsilon(c_1)c_2,\end{equation}
    for all $c \in C.$
\end{enumerate}
\end{definition}

\begin{definition}Now we consider modules and comodules over a Hom-algebra and a Hom-coalgebra, respectively.
\begin{enumerate}
\item A right $(A,\alpha)$-Hom-module consists of an object $(M,\mu) \in \widetilde{\mathcal{H}}(\mathcal{M}_k)$ together with a $k$-linear map $\psi:M\otimes A\to M,\: \psi(m\otimes a)=ma$, in $\widetilde{\mathcal{H}}(\mathcal{M}_k)$ satisfying the following
   \begin{equation}\mu(m)(ab)= (ma)\alpha(b)\:; \: m1_A=\mu(m),\end{equation} for all $m\in M$ and $a, b \in A$. $\psi$ being a morphism in $ \widetilde{\mathcal{H}}(\mathcal{M}_k)$ means that
   \begin{equation}\mu(ma)= \mu(m)\alpha(a).\end{equation} We call a morphism $f:(M,\mu)\to(N,\nu)$ in $\widetilde{\mathcal{H}}(\mathcal{M}_k)$ right $A$-linear if $f(ma)=f(m)a$ for all $m \in M$ and $a \in A$. We denote by $\widetilde{\mathcal{H}}(\mathcal{M}_k)_{A}$ the category of right $(A,\alpha)$-Hom-modules and $A$-linear morphisms.
   \item A right $(C,\gamma)$-Hom-comodule consists of an object $(M,\mu) \in \widetilde{\mathcal{H}}(\mathcal{M}_k)$ together with a $k$-linear map $\rho:M\to M\otimes C,\: \rho(m)=m_{[0]}\otimes m_{[1]}$, in $\widetilde{\mathcal{H}}(\mathcal{M}_k)$ such that
       \begin{equation}\mu^{-1}(m_{[0]})\otimes \Delta(m_{[1]})=m_{[0][0]}\otimes(m_{[0][1]}\otimes \gamma^{-1}(m_{[1]}))\: ; \: m_{[0]}\varepsilon(m_{[1]})=\mu^{-1}(m)\end{equation} for all $m \in M.$ $\rho$ being a morphism in $\widetilde{\mathcal{H}}(\mathcal{M}_k)$ stands for \begin{equation}\rho(\mu(m))= \mu(m_{[0]})\otimes \gamma(m_{[1]}),\end{equation} and we call a morphism $f:(M,\mu)\to(N,\nu)$ in $\widetilde{\mathcal{H}}(\mathcal{M}_k)$ right $C$-colinear if $f(m_{[0]})\otimes m_{[1]}=f(m)_{[0]}\otimes f(m)_{[1]}$ for all $m \in M$. $\widetilde{\mathcal{H}}(\mathcal{M}_k)^{C}$ denotes the category of right $(C,\gamma)$-Hom-comodules and $C$-colinear morphisms.
\end{enumerate}
\end{definition}

\begin{definition}  A bialgebra in $\widetilde{\mathcal{H}}(\mathcal{M}_k)$ is called a monoidal Hom-bialgebra and a Hopf algebra in $\widetilde{\mathcal{H}}(\mathcal{M}_k)$ is called a monoidal Hom-Hopf algebra, in other words
\begin{enumerate}
\item A monoidal Hom-bialgebra $(H,\alpha)$ is a sextuple $(H,\alpha,m,\eta,\Delta,\varepsilon)$ where $(H,\alpha,m,\eta)$ is a monoidal Hom-algebra and $(H,\alpha,\Delta,\varepsilon)$ is a monoidal Hom-coalgebra  such that
    \begin{equation}\Delta(hh')=\Delta(h)\Delta(h')\:;\: \Delta(1_H)=1_H\otimes1_H,\end{equation}
    \begin{equation}\varepsilon(hh')=\varepsilon(h)\varepsilon(h')\:;\: \varepsilon(1_H)=1,\end{equation}
    for any $h,h' \in H$.
    \item A monoidal Hom-Hopf algebra $(H,\alpha)$ is a septuple  $(H,\alpha,m,\eta,\Delta,\varepsilon,S)$ where $(H,\alpha,m,\eta,\Delta,\varepsilon)$ is a monoidal Hom-bialgebra and $S:H \to H$ is a morphism in $\widetilde{\mathcal{H}}(\mathcal{M}_k)$ such that $S\ast id_H=id_H\ast S=\eta\circ\varepsilon$.
\end{enumerate}
\end{definition}
$S$ is called antipode and it has the following properties
\begin{equation*} S(gh)=S(h)S(g)\: ; \: S(1_H)=1_H\:;\end{equation*}
\begin{equation*} \Delta(S(h))=S(h_2)\otimes S(h_1)\: ; \: \varepsilon\circ S=\varepsilon,\end{equation*}
for any elements $g,h$ of the monoidal Hom-Hopf algebra $H$.

\begin{definition}Let $(H,\alpha)$ be a monoidal Hom-Hopf algebra. Then an object $(M,\mu)$ in $\widetilde{\mathcal{H}}(\mathcal{M}_k)$ is called a left \emph{$(H,\alpha)$-Hom-Hopf module} if $(M,\mu)$ is both a left $(H,\alpha)$-Hom-module and a left $(H,\alpha)$-Hom-comodule such that the compatibility relation
\begin{equation}\rho(hm)=h_1m_{(-1)}\otimes h_2m_{(0)}\end{equation} holds for $h \in H$ and $m \in M$, where $\rho: M \to H\otimes M$ is a left $H$-coaction on $M$. A morphism of two $(H,\alpha)$-Hom-Hopf modules is a $k$-linear map which is both left $H$-linear and left $H$-colinear. The category of left $(H,\alpha)$-Hom-Hopf modules and the morphisms between them is denoted by $_{H}^{H}\widetilde{\mathcal{H}}(\mathcal{M}_k)$.
\end{definition}
We have also the following fundamental result:
\begin{theorem}\label{inverse-functors}$(F,G)$ is a pair of inverse equivalences, where the functors $F$ and $G$ are defined by
\begin{equation}F=(H\otimes -,\alpha\otimes -):\widetilde{\mathcal{H}}(\mathcal{M}_k) \to\: _{H}^{H}\widetilde{\mathcal{H}}(\mathcal{M}_k),\end{equation}
\begin{equation}G=\: ^{coH}(-):\: _{H}^{H}\widetilde{\mathcal{H}}(\mathcal{M}_k) \to \widetilde{\mathcal{H}}(\mathcal{M}_k).\end{equation}
\end{theorem}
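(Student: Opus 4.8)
The plan is to prove the asserted equivalence by exhibiting explicit unit and counit natural isomorphisms, following the classical Fundamental Theorem of Hopf modules and inserting the structure automorphisms wherever the Hom-axioms demand. First I would make both functors precise. For $(V,\nu)\in\widetilde{\mathcal{H}}(\mathcal{M}_k)$, the object $F(V,\nu)=(H\otimes V,\alpha\otimes\nu)$ is endowed with the \emph{free} left Hom-Hopf module structure: $(H,\alpha)$ acts by the (twisted) multiplication on the first tensorand and coacts by the (twisted) comultiplication on the first tensorand. One checks that these satisfy the left Hom-module and left Hom-comodule axioms and the compatibility relation $\rho(hx)=h_1 x_{(-1)}\otimes h_2 x_{(0)}$, so that $F(V,\nu)\in{}_{H}^{H}\widetilde{\mathcal{H}}(\mathcal{M}_k)$; functoriality is immediate since $F$ applies $H\otimes-$ to morphisms. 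For a Hom-Hopf module $(M,\mu)$ with coaction $\rho(m)=m_{(-1)}\otimes m_{(0)}$, I define the coinvariants
\begin{equation*}
M^{coH}=\{\,m\in M:\rho(m)=1_H\otimes\mu^{-1}(m)\,\}.
\end{equation*}
Because $\rho$ is a morphism in $\widetilde{\mathcal{H}}(\mathcal{M}_k)$, one has $\rho(\mu(m))=\alpha(m_{(-1)})\otimes\mu(m_{(0)})$, from which $M^{coH}$ is seen to be stable under $\mu^{\pm1}$; hence $G(M,\mu)=(M^{coH},\mu|_{M^{coH}})$ is a well-defined object of $\widetilde{\mathcal{H}}(\mathcal{M}_k)$, and $G$ acts on morphisms by restriction.

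Next I would establish the natural isomorphism $G\circ F\cong\mathrm{Id}$, the easier direction. Since the coaction on $H\otimes V$ is comultiplication on the first leg, the coinvariance equation (combined with the counit axiom) forces that leg into the line spanned by $1_H$, whence $(H\otimes V)^{coH}=1_H\otimes V$. The assignment $v\mapsto 1_H\otimes\nu^{-1}(v)$ then furnishes a morphism $(V,\nu)\to G(F(V,\nu))$ in $\widetilde{\mathcal{H}}(\mathcal{M}_k)$ whose inverse is induced by $1_H\otimes v\mapsto\nu(v)$; naturality in $(V,\nu)$ is routine.

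The harder direction is $F\circ G\cong\mathrm{Id}$. The candidate isomorphism $\theta_{M}:H\otimes M^{coH}\to M$ is the (twisted) action $h\otimes m\mapsto h\cdot m$, which is a morphism of Hom-Hopf modules by the module and compatibility axioms. For its inverse I would use the canonical projection onto coinvariants
\begin{equation*}
P(m)=S(m_{(-1)})\,m_{(0)},
\end{equation*}
suitably twisted, and set $m\mapsto m_{(-1)}\otimes P(m_{(0)})$. The two essential lemmas are: (i) $P$ takes values in $M^{coH}$, i.e.\ $\rho(P(m))=1_H\otimes\mu^{-1}(P(m))$, proved by expanding $\rho(P(m))$ with the Hopf-module compatibility, applying Hom-coassociativity to reorganize the iterated coaction, and collapsing with the convolution identity $S\ast id_H=\eta\circ\varepsilon$; and (ii) $\theta_M$ and the projection map are mutually inverse, which again rests on $S\ast id_H=id_H\ast S=\eta\circ\varepsilon$ together with Hom-associativity of the action. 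Naturality of $\theta$ then follows from the $H$-linearity and $H$-colinearity of morphisms.

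The main obstacle is not the conceptual skeleton --- which is exactly the classical proof --- but the disciplined bookkeeping of the automorphisms $\alpha$ on $(H,\alpha)$ and $\mu$ on $(M,\mu)$ together with their inverses, so that every map written above is a genuine morphism in $\widetilde{\mathcal{H}}(\mathcal{M}_k)$ and each module/comodule identity is honored. Concretely, the sharpest point is step (i): applying Hom-coassociativity (\ref{Hom_coassociativity_Weak_counitality}) to $\Delta(m_{(-1)})$ produces the twist $\alpha^{-1}$ on one leg, and these twists must be matched precisely against those coming from the left action and from $\mu^{-1}$ before the antipode can be invoked to collapse the expression to $1_H\otimes\mu^{-1}(P(m))$. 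Once the twists in (i) and in the invertibility check (ii) are reconciled, the unit and counit isomorphisms assemble into the desired pair of inverse equivalences $(F,G)$.
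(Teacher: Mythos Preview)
Your proposal is correct and follows exactly the standard approach. The paper itself does not prove this theorem in place---it is quoted from \cite{CaenepeelGoyvaerts}---but the very ingredients you outline (the projection $P_L(m)=S(m_{(-1)})m_{(0)}$ onto ${}^{coH}M$, the action map $\theta:h\otimes m\mapsto hm$, and its inverse $\vartheta:m\mapsto m_{(-1)}\otimes P_L(m_{(0)})$) are precisely what the paper spells out later in the lemma preceding and the proof of Theorem~\ref{fundamental-thm-left-cov-bimod} for the richer left-covariant bimodule setting, so your sketch coincides with the argument the paper relies on.
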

Above, we get $^{coH}M=\{m\in M|\rho(m)=1_H\otimes \mu^{-1}(m)\}$ for a left $(H,\alpha)$-Hom-Hopf module $(M,\mu)$, which is called the submodule of \emph{left Hom-coinvariants}, and $(^{coH}M,\mu|_{^{coH}M})$ is in $\widetilde{\mathcal{H}}(\mathcal{M}_k)$.

\section{Some Correspondences in $\mathcal{\widetilde{H}(C)}$}
Let $\tilde{H}$ be a bialgebra in $\mathcal{\widetilde{H}(C)}$. We denote by $_{H}\mathcal{\widetilde{H}}(\mathcal{M})$ the category of left $\tilde{H}$-modules, by $^{H}\mathcal{\widetilde{H}}(\mathcal{M})$ the category of left $\tilde{H}$-comodules, by $_{H}\mathcal{\widetilde{H}}(\mathcal{M})_{H}$ the category of $\tilde{H}$-bimodules, etc.

\begin{definition}
An object $(M,\mu)$ in $\mathcal{\widetilde{H}(C)}$ is said to be a \emph{left} $\tilde{H}$-\emph{Hopf} \emph{module} if it is a left $\tilde{H}$-module with an action $\psi: H\otimes M \to M$ and a left $\tilde{H}$-comodule with a coaction $\rho: M \to H\otimes M$ such that the compatibility condition
 \begin{equation}\label{comp-cond-Hopf-mod}\rho\circ\psi=\psi'\circ(id_H\otimes \rho)\end{equation}
 is fulfilled.
\end{definition}
Above, $\psi'$ indicates the diagonal action of $\tilde{H}$ on the tensor product $H\otimes M$ of its modules, that is,
\begin{eqnarray*}\psi'
&=&(\tilde{m}_H\otimes\psi)\circ\tilde{a}_{H\otimes H,H,M}\circ(\tilde{a}^{-1}_{H,H,H}\otimes id_M)\circ((id_H\otimes\tau_{H,H})\otimes id_M)\\
& &\circ(\tilde{a}_{H,H,H}\otimes id_M)\circ\tilde{a}^{-1}_{H\otimes H,H,M}\circ(\tilde{\Delta}_H\otimes(id_H\otimes id_M)).
\end{eqnarray*}
Condition (\ref{comp-cond-Hopf-mod}) expresses the $\tilde{H}$-linearity of $\rho$, which is equivalent to the $\tilde{H}$-colinearity of $\psi$, i.e.,
$\rho\circ\psi=(id_H\otimes \psi)\circ\rho',$ where
\begin{eqnarray*}\rho'
&=&(\tilde{m}_H\otimes(id_H\otimes id_M))\circ\tilde{a}_{H\otimes H,H,M}\circ(\tilde{a}^{-1}_{H,H,H}\otimes id_M)\circ((id_H\otimes\tau_{H,H})\otimes id_M)\\
& &\circ(\tilde{a}_{H,H,H}\otimes id_M)\circ\tilde{a}^{-1}_{H\otimes H,H,M}\circ(\tilde{\Delta}_H\otimes\rho)
\end{eqnarray*}
is the codiagonal coaction of $\tilde{H}$ on the tensor product $H\otimes M$ of its comodules.

A morphism $f:(M,\mu)\to(N,\nu)$ between left $\tilde{H}$-Hopf modules, in $\mathcal{\widetilde{H}(C)}$, is a \emph{Hopf module morphism} if it is both $\tilde{H}$-linear and $\tilde{H}$-colinear. Left $\tilde{H}$-Hopf modules together with $\tilde{H}$-Hopf module morphisms constitute a category which is denoted by $^{H}_{H}\mathcal{\widetilde{H}}(\mathcal{M})$. The categories $^{H}\mathcal{\widetilde{H}}(\mathcal{M})_{H}$, $_{H}\mathcal{\widetilde{H}}(\mathcal{M})^{H}$ and $\mathcal{\widetilde{H}}(\mathcal{M})^{H}_{H}$ are formed in a similar way.

\begin{definition}$(M,\mu)$ $\in$ $\mathcal{\widetilde{H}(C)}$ is called \emph{left-covariant} $\tilde{H}$-\emph{bimodule} if it is a left $\tilde{H}$-module with an action $\psi: H\otimes M \to M$, a right $\tilde{H}$-module with an action $\phi: M\otimes H \to M$ and a left $\tilde{H}$-comodule with a coaction $\rho: M \to H\otimes M$ such that the following compatibility conditions
 \begin{equation}\label{comp-cond-left-cov-1}\phi\circ\psi''=\psi\circ(id_H\otimes \phi),\end{equation}
 \begin{equation}\label{comp-cond-left-cov-2}\rho\circ\psi=\psi'\circ(id_H\otimes \rho),\end{equation}
 \begin{equation}\label{comp-cond-left-cov-3}\rho\circ\phi=\phi'\circ(\rho\otimes id_H)\end{equation}
are satisfied, where $\psi''$ is the diagonal left action of $\tilde{H}$ on $M\otimes H$, $\psi'$ is the diagonal action defined above and $\phi'$ is the diagonal right action of $\tilde{H}$ on $H\otimes M$.
\end{definition}
We denote by $^{H}_{H}\mathcal{\widetilde{H}}(\mathcal{M})_H$ the category of left-covariant $\tilde{H}$-bimodules together with those morphisms in $\mathcal{\widetilde{H}(C)}$ that are left and right $\tilde{H}$-linear and left $\tilde{H}$-colinear. In the same way the category $_{H}\mathcal{\widetilde{H}}(\mathcal{M})_H^H$ \emph{right-covariant} $\tilde{H}$-\emph{bimodules} is defined.

We also make the following definition dual to the previous one:

\begin{definition}Any object $(M,\mu)$ in $\mathcal{\widetilde{H}(C)}$ is termed \emph{right-covariant} $\tilde{H}$-\emph{bicomodule} if it is a left $\tilde{H}$-comodule with a coaction $\rho: M \to H\otimes M$, a right $\tilde{H}$-comodule with a coaction $\sigma:M \to M\otimes H$ and a right $\tilde{H}$-module with an action $\phi: M\otimes H \to M$ with the requirements that $\sigma$ is left $\tilde{H}$-colinear and $\phi$ is both left and right $\tilde{H}$-colinear.
\end{definition}
By $^{H}\mathcal{\widetilde{H}}(\mathcal{M})^H_H$ we indicate the category of right-covariant $\tilde{H}$-bicomodules together with the morphisms that are left and right $\tilde{H}$-colinear and right $\tilde{H}$-linear. One can define the category $^{H}_H\mathcal{\widetilde{H}}(\mathcal{M})^H$ in the same manner.

\begin{theorem}Let $\tilde{A}$ and $\tilde{B}$ be two algebras in $\widetilde{\mathcal{H}}(\mathcal{C})$. Give the canonical left $\tilde{A}$-module structure to $(A\otimes V,\alpha\otimes\nu)$, where $(V,\nu)$ is an object in $\widetilde{\mathcal{H}}(\mathcal{C})$. There is a one-to-one correspondence between
\begin{enumerate}
\item right $\tilde{B}$-module structures making $(A\otimes V,\alpha\otimes\nu)$ an $(\tilde{A},\tilde{B})$-bimodule,
\item morphisms $f: (V\otimes B,\nu\otimes\beta)\to (A\otimes V,\alpha\otimes\nu)$ in $\widetilde{\mathcal{H}}(\mathcal{C})$ satisfying the following commutation relations
\begin{equation}\label{cond-1-for-f}f\circ(id_V\otimes \tilde{m}_B)\circ\tilde{a}_{V,B,B}=(\tilde{m}_A\otimes id_V)\circ\tilde{a}^{-1}_{A,A,V}\circ(id_A\otimes f)\circ \tilde{a}_{A,V,B}\circ(f\otimes id_B)\end{equation}
\begin{equation}\label{cond-2-for-f}f\circ(id_V\otimes \eta_B)= (\eta_A\otimes id_V)\circ\tilde{l}^{-1}_V\circ\tilde{r}_V.\end{equation}
\end{enumerate}
\end{theorem}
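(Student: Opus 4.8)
The plan is to write down the two maps realizing the bijection and to show that, under the coherence isomorphisms $\tilde a,\tilde l,\tilde r$, each module axiom on one side is equivalent to the corresponding commutation relation on the other. Throughout I abbreviate the canonical left $\tilde A$-action on $(A\otimes V,\alpha\otimes\nu)$ by
\[
\lambda:=(\tilde m_A\otimes id_V)\circ\tilde a^{-1}_{A,A,V}\colon A\otimes(A\otimes V)\to A\otimes V,
\]
so that the right-hand side of (\ref{cond-1-for-f}) is exactly $\lambda\circ(id_A\otimes f)\circ\tilde a_{A,V,B}\circ(f\otimes id_B)$; this observation is what suggests both constructions.

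\textbf{From (1) to (2).} Given a right $\tilde B$-action $\phi\colon(A\otimes V)\otimes B\to A\otimes V$ making $(A\otimes V,\alpha\otimes\nu)$ an $(\tilde A,\tilde B)$-bimodule, I would restrict $\phi$ to the \emph{unit slice} $1_A\otimes V$ by setting
\[
f:=\phi\circ\big((\eta_A\otimes id_V)\otimes id_B\big)\circ\big(\tilde l^{-1}_V\otimes id_B\big),
\]
which is a morphism $(V\otimes B,\nu\otimes\beta)\to(A\otimes V,\alpha\otimes\nu)$ in $\widetilde{\mathcal H}(\mathcal C)$ because $\phi$, $\eta_A$ and the constraints are. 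I expect (\ref{cond-1-for-f}) to drop out of the right-module associativity of $\phi$ after plugging the slice into its first argument and reassociating with $\tilde a$, and (\ref{cond-2-for-f}) to be the right-unit axiom $\phi\circ(id\otimes\eta_B)=\tilde r$ read on the same slice.

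\textbf{From (2) to (1).} Conversely, given $f$ subject to (\ref{cond-1-for-f})--(\ref{cond-2-for-f}) I would define
\[
\phi:=\lambda\circ(id_A\otimes f)\circ\tilde a_{A,V,B}\colon(A\otimes V)\otimes B\to A\otimes V,
\]
encoding $(a\otimes v)\cdot b=a\cdot f(v\otimes b)$; it is a morphism in $\widetilde{\mathcal H}(\mathcal C)$ since $\lambda$, $f$ and $\tilde a$ are. There are then three things to verify. First, that $\phi$ is a right $\tilde B$-action: its associativity axiom should reduce, using the associativity of the single action $\lambda$ and the pentagon for $\tilde a$, precisely to (\ref{cond-1-for-f}), while its unit axiom should reduce, using the Hom-unitality of $\lambda$ and the triangle identity, precisely to (\ref{cond-2-for-f}). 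Second, that left and right actions are compatible, i.e. $(A\otimes V,\alpha\otimes\nu)$ is a bimodule; because $\phi$ is assembled out of $\lambda$, this should follow formally from associativity of $\lambda$ together with naturality of $\tilde a$, with no extra hypothesis on $f$. Finally, I would check that the two assignments are mutually inverse: the round trip starting from $\phi$ returns $\phi$ by the bimodule identity combined with the Hom-unitality $\lambda\circ(\eta_A\otimes id)=\tilde l$, and the round trip starting from $f$ returns $f$ by the same unitality computation and coherence.

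The genuinely routine part is the bookkeeping of the twists $\alpha,\nu,\beta$ and their inverses that $\tilde a,\tilde l,\tilde r$ force into every composite. The step I expect to be the main obstacle is the unit axiom in the direction (2)$\Rightarrow$(1): there the composite $\tilde l^{-1}_V\circ\tilde r_V$ on the right of (\ref{cond-2-for-f}) has to cancel exactly against the Hom-unitality datum built into $\lambda$ (the categorical form of $1_Aa=\alpha(a)$), and keeping the structure automorphism $\alpha\otimes\nu$ correctly aligned through that cancellation is where the nontriviality of the constraints in $\widetilde{\mathcal H}(\mathcal C)$ is really felt.
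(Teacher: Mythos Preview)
Your proposal is correct and follows essentially the same approach as the paper. Your definitions of $f$ and $\phi$ coincide with the paper's (the paper writes $f=\phi\circ\tilde a^{-1}_{A,V,B}\circ(\eta_A\otimes id_{V\otimes B})\circ\tilde l^{-1}_{V\otimes B}$, which equals your $\phi\circ((\eta_A\otimes id_V)\otimes id_B)\circ(\tilde l^{-1}_V\otimes id_B)$ by naturality of $\tilde a$ and unit coherence), and your verification strategy---bimodule compatibility from associativity of $\lambda$ and naturality of $\tilde a$, right-module associativity reducing to (\ref{cond-1-for-f}) via the pentagon, and unitality reducing to (\ref{cond-2-for-f}) via the triangle---matches the paper's computations step for step; you additionally make the mutual-inverse check explicit, which the paper leaves implicit.
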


\begin{proof}The canonical left $\tilde{A}$-module structure on $A\otimes V$ is given by $\varphi:=(\tilde{m}_A\otimes id_V)\tilde{a}^{-1}_{A,A,V}:A\otimes(A\otimes V)\to A\otimes V$ and let $\phi:(A\otimes V)\otimes B\to A\otimes V$ be a right $\tilde{B}$-module structure which makes $A\otimes V$ an $(\tilde{A},\tilde{B})$-bimodule. If we take $f=\phi\circ\tilde{a}^{-1}_{A,V,B}\circ(\eta_A\otimes(id_V\otimes id_B))\circ\tilde{l}^{-1}_{V\otimes B}$, then

 \begin{eqnarray*}\lefteqn{(\tilde{m}_A\otimes id_V)\circ\tilde{a}^{-1}_{A,A,V}\circ(id_A\otimes f)\circ\tilde{a}_{A,V,B}\circ(f \otimes id_B)}\hspace{1em}\\
 &=&(\tilde{m}_A\otimes id_V)\circ\tilde{a}^{-1}_{A,A,V}\circ(id_A\otimes \phi)\circ(id_A\otimes\tilde{a}^{-1}_{A,V,B})\circ(id_A\otimes(\eta_A\otimes(id_V\otimes id_B)))\\
 & &\circ(id_A\otimes\tilde{l}^{-1}_{V\otimes B})\circ\tilde{a}_{A,V,B}\circ(f\otimes id_B)\\
 &=&\phi\circ((\tilde{m}_A\otimes id_V)\otimes id_B)\circ(\tilde{a}^{-1}_{A,A,V}\otimes id_B)\circ\tilde{a}^{-1}_{A,A\otimes V,B}\circ(id_A\otimes\tilde{a}^{-1}_{A,V,B})\\
 & &\circ(id_A\otimes(\eta_A\otimes(id_V\otimes id_B)))\circ(id_A\otimes\tilde{l}^{-1}_{V\otimes B})\circ\tilde{a}_{A,V,B}\circ(f\otimes id_B)\\
 &=&\phi\circ((\tilde{m}_A\otimes id_V)\otimes id_B)\circ(\tilde{a}^{-1}_{A,A,V}\otimes id_B)\circ\tilde{a}^{-1}_{A,A\otimes V,B}\circ(id_A\otimes((\eta_A\otimes id_V)\otimes id_B))\\
 & &\circ(id_A\otimes\tilde{a}^{-1}_{I,V,B})\circ(id_A\otimes\tilde{l}^{-1}_{V\otimes B})\circ\tilde{a}_{A,V,B}\circ(f\otimes id_B)\\
 &=&\phi\circ((\tilde{m}_A\otimes id_V)\otimes id_B)\circ(\tilde{a}^{-1}_{A,A,V}\otimes id_B)\circ((id_A\otimes(\eta_A\otimes id_V))\otimes id_B)\circ\tilde{a}^{-1}_{A,I\otimes V,B}\\
 & &\circ(id_A\otimes\tilde{a}^{-1}_{I,V,B})\circ(id_A\otimes\tilde{l}^{-1}_{V\otimes B})\circ\tilde{a}_{A,V,B}\circ(f\otimes id_B)\\
 &=&\phi\circ((\tilde{m}_A\otimes id_V)\otimes id_B)\circ(((id_A\otimes\eta_A)\otimes id_V)\otimes id_B)\circ(\tilde{a}^{-1}_{A,I,V}\otimes id_B)\circ\tilde{a}^{-1}_{A,I\otimes V,B}\\
 & &\circ(id_A\otimes\tilde{a}^{-1}_{I,V,B})\circ(id_A\otimes\tilde{l}^{-1}_{V\otimes B})\circ\tilde{a}_{A,V,B}\circ(f\otimes id_B)\\
 &=&\phi\circ((\tilde{r}_A\otimes id_V)\otimes id_B)\circ(\tilde{a}^{-1}_{A,I,V}\otimes id_B)\circ\tilde{a}^{-1}_{A,I\otimes V,B}\circ(id_A\otimes\tilde{a}^{-1}_{I,V,B})\circ(id_A\otimes\tilde{l}^{-1}_{V\otimes B})\\
 & &\circ\tilde{a}_{A,V,B}\circ(f\otimes id_B)\\
 &=&\phi\circ((id_A\otimes\tilde{l}_V)\otimes id_B)\circ\tilde{a}^{-1}_{A,I\otimes V,B}\circ(id_A\otimes\tilde{a}^{-1}_{I,V,B})
 \circ(id_A\otimes\tilde{l}^{-1}_{V\otimes B})\circ\tilde{a}_{A,V,B}\circ(f\otimes id_B)\\
 &=&\phi\circ((id_A\otimes\tilde{l}_V)\otimes id_B)\circ\tilde{a}^{-1}_{A,I\otimes V,B}\circ(id_A\otimes(\tilde{l}^{-1}_V\otimes id _B))
 \circ\tilde{a}_{A,V,B}\circ(f\otimes id_B)\\
 &=&\phi\circ((id_A\otimes\tilde{l}_V)\otimes id_B)\circ\tilde{a}^{-1}_{A,I\otimes V,B}\circ\tilde{a}_{A,I\otimes V,B}
 \circ((id_A\otimes\tilde{l}^{-1}_V)\otimes id _B)\circ(f\otimes id_B)\\
 &=&\phi\circ((id_A\otimes\tilde{l}_V)\otimes id_B)\circ((id_A\otimes\tilde{l}^{-1}_V)\otimes id _B)\circ(f\otimes id_B)\\
 &=&\phi\circ(\phi\otimes id_B)(\tilde{a}^{-1}_{A,V,B}\otimes id_B)\circ((\eta_A\otimes(id_V\otimes id_B))\otimes id_B)\circ(\tilde{l}^{-1}_{V\otimes B}\otimes id_B)\\
 &=&\phi\circ((id_A\otimes id_V)\otimes\tilde{m}_B)\circ\tilde{a}_{A\otimes V,B,B}\circ(\tilde{a}^{-1}_{A,V,B}\otimes id_B)\circ((\eta_A\otimes(id_V\otimes id_B))\otimes id_B)\\
 & &\circ(\tilde{l}^{-1}_{V\otimes B}\otimes id_B)\\
 &=&\phi\circ\tilde{a}^{-1}_{A,V,B}\circ(id_A\otimes (id_V\otimes\tilde{m}_B))\circ(id_A\otimes\tilde{a}_{V,B,B})\circ\tilde{a}_{A,V\otimes B,B}
 \circ(\tilde{a}_{A,V,B}\otimes id_B)\\
 & &\circ(\tilde{a}^{-1}_{A,V,B}\otimes id_B)\circ((\eta_A\otimes(id_V\otimes id_B))\otimes id_B)\circ(\tilde{l}^{-1}_{V\otimes B}\otimes id_B)\\
 &=&\phi\circ\tilde{a}^{-1}_{A,V,B}\circ(id_A\otimes (id_V\otimes\tilde{m}_B))\circ(id_A\otimes\tilde{a}_{V,B,B})\circ(\eta_A\otimes((id_V\otimes id_B)\otimes id_B))\\
 & &\circ\tilde{a}_{I,V\otimes B,B}\circ(\tilde{l}^{-1}_{V\otimes B}\otimes id_B)\\
 &=&\phi\circ\tilde{a}^{-1}_{A,V,B}\circ(id_A\otimes (id_V\otimes\tilde{m}_B))\circ(\eta_A\otimes(id_V\otimes (id_B\otimes id_B)))\circ(id_I\otimes\tilde{a}_{V, B,B})\\
 & &\circ\tilde{a}_{I,V\otimes B,B}\circ(\tilde{l}^{-1}_{V\otimes B}\otimes id_B)\\
 &=&\phi\circ\tilde{a}^{-1}_{A,V,B}\circ(id_A\otimes (id_V\otimes\tilde{m}_B))\circ(\eta_A\otimes(id_V\otimes (id_B\otimes id_B)))\circ(id_I\otimes\tilde{a}_{V, B,B})\circ\tilde{l}^{-1}_{(V\otimes B)\otimes B}\\
 &=&\phi\circ\tilde{a}^{-1}_{A,V,B}\circ(\eta_A\otimes(id_V\otimes id_B))\circ(id_I\otimes(id_V\otimes\tilde{m}_B))\circ(id_I\otimes\tilde{a}_{V, B,B})\circ\tilde{l}^{-1}_{(V\otimes B)\otimes B}\\
 &=&\phi\circ\tilde{a}^{-1}_{A,V,B}\circ(\eta_A\otimes(id_V\otimes id_B))\circ\tilde{l}^{-1}_{V\otimes B}\circ(id_V\otimes\tilde{m}_B)\circ\tilde{a}_{V,B,B}\\
 &=&f\circ(id_V\otimes\tilde{m}_B)\circ\tilde{a}_{V,B,B}\:,
\end{eqnarray*}
where in the second equation the bimodule compatibility condition (i.e., left $\tilde{A}$-linearity of $\phi$) $\phi\circ(\varphi\otimes id_B)\circ\tilde{a}^{-1}_{A,A\otimes V,B}=\varphi\circ(id_A\otimes \phi)$ was used and the twelfth equation has followed by the associativity condition for $\phi$,
\begin{eqnarray*}f(id_V\otimes\eta_B)&=&\phi\circ\tilde{a}^{-1}_{A,V,B}\circ(\eta_A\otimes(id_V\otimes id_B))\circ\tilde{l}^{-1}_{V\otimes B}\circ(id_V\otimes\eta_B)\\
&=&\phi\circ\tilde{a}^{-1}_{A,V,B}\circ\tilde{a}_{A,V,B}\circ((\eta_A\otimes id_V)\otimes id_B)\circ(\tilde{l}^{-1}_{V}\otimes id_B)\circ(id_V\otimes\eta_B)\\
&=&\phi\circ((\eta_A\otimes id_V)\otimes id_B)\circ((id_I\otimes id_V)\otimes\eta_B)\circ(\tilde{l}^{-1}_{V}\otimes id_I)\\
&=&\phi\circ((id_A\otimes id_V)\otimes \eta_B)\circ((\eta_A\otimes id_V)\otimes id_I)\circ(\tilde{l}^{-1}_{V}\otimes id_I)\\
&=&\phi\circ((id_A\otimes id_V)\otimes \eta_B)\circ\tilde{r}^{-1}_{A\otimes V}\circ(\eta_A\otimes id_V)\circ\tilde{l}^{-1}_V\circ\tilde{r}^{-1}_V\\
&=&\tilde{r}_{A\otimes V}\circ\tilde{r}^{-1}_{A\otimes V}\circ(\eta_A\otimes id_V)\circ\tilde{l}^{-1}_V\circ\tilde{r}^{-1}_V\\
&=&(\eta_A\otimes id_V)\circ\tilde{l}^{-1}_V\circ\tilde{r}^{-1}_V \:,
\end{eqnarray*}
where in the penultimate equation the unity condition for $\phi$ has been used.

On the other hand, suppose that $f:V\otimes B\to A\otimes V$ is given fulfilling the relations (\ref{cond-1-for-f}) and (\ref{cond-2-for-f}). If we put $\phi=\varphi\circ(id_A\otimes f)\circ\tilde{a}_{A,V,B}$, where $\varphi$ is the canonical left $\tilde{A}$-module structure on $A\otimes V$, then $\phi$ is $\tilde{A}$-linear:
\begin{eqnarray*}\varphi\circ(id_A\otimes \phi)&=&\varphi\circ(id_A\otimes \varphi)\circ(id_A\otimes(id_A\otimes f))\circ(id_A\otimes \tilde{a}_{A,V,B})\\
&=&\varphi\circ(\tilde{m}_A\otimes(id_A\otimes id_V))\circ\tilde{a}^{-1}_{A,A,A\otimes V}\circ(id_A\otimes(id_A\otimes f))\circ(id_A\otimes \tilde{a}_{A,V,B})\\
&=&\varphi\circ(\tilde{m}_A\otimes(id_A\otimes id_V))\circ((id_A\otimes id_A)\otimes f)\circ\tilde{a}^{-1}_{A,A,V\otimes B}\circ(id_A\otimes \tilde{a}_{A,V,B})\\
&=&\varphi\circ(id_A\otimes f)\circ(\tilde{m}_A\otimes(id_V\otimes id_B))\circ\tilde{a}^{-1}_{A,A,V\otimes B}\circ(id_A\otimes \tilde{a}_{A,V,B})\\
&=&\varphi\circ(id_A\otimes f)\circ(\tilde{m}_A\otimes(id_V\otimes id_B))\circ\tilde{a}_{A\otimes A,V,B}\circ(\tilde{a}^{-1}_{A,A,V}\otimes id_B)\circ\tilde{a}^{-1}_{A,A\otimes V,B}\\
&=&\varphi\circ(id_A\otimes f)\circ\tilde{a}_{A,V,B}\circ((\tilde{m}_A\otimes id_V)\otimes id_B)\circ(\tilde{a}^{-1}_{A,A,V}\otimes id_B)\circ\tilde{a}^{-1}_{A,A\otimes V,B}\\
&=&\varphi\circ(id_A\otimes f)\circ\tilde{a}_{A,V,B}\circ(\varphi\otimes id_B)\circ\tilde{a}^{-1}_{A,A\otimes V,B}\\
&=&\phi\circ(\varphi\otimes id_B)\circ\tilde{a}^{-1}_{A,A\otimes V,B}\:,
\end{eqnarray*}
where we have used the associativity of $\varphi$ in the second equality and the fifth equality has followed by the fact that $\tilde{a}$ satisfies the Pentagon Axiom.

In what follows we prove that $\phi$ makes $A\otimes V$ a right $\tilde{B}$-module: $\phi$ is associative due to
\begin{eqnarray*}\phi\circ(\phi\otimes id_B)&=&\phi\circ(\varphi\otimes id_B)\circ((id_A\otimes f)\otimes id_B)\circ(\tilde{a}_{A,V,B}\otimes id_B)\\
&=&\varphi\circ(id_A\otimes\phi)\circ\tilde{a}_{A,A\otimes V,B}\circ((id_A\otimes f)\otimes id_B)\circ(\tilde{a}_{A,V,B}\otimes id_B)\\
&=&\varphi\circ(id_A\otimes\phi)\circ(id_A\otimes (f\otimes id_B))\circ\tilde{a}_{A,V\otimes B,B}\circ(\tilde{a}_{A,V,B}\otimes id_B)\\
&=&\varphi\circ(id_A\otimes (\tilde{m}_A\otimes id_V)\circ\tilde{a}^{-1}_{A,A,V}\circ(id_A\otimes f)\circ\tilde{a}_{A,V,B}\circ(f\otimes id_B))\\
& &\circ\tilde{a}_{A,V\otimes B,B}\circ(\tilde{a}_{A,V,B}\otimes id_B)\\
&=&\varphi\circ(id_A\otimes f\circ(id_V\otimes \tilde{m}_B)\circ\tilde{a}_{V,B,B})\circ\tilde{a}_{A,V\otimes B,B}\circ(\tilde{a}_{A,V,B}\otimes id_B)\\
&=&\varphi\circ(id_A\otimes f)\circ\tilde{a}_{A,V,B}\circ\tilde{a}^{-1}_{A,V,B}\circ(id_A\otimes(id_V\otimes\tilde{m}_B))\circ(id_A\otimes\tilde{a}_{V,B,B})\\
& &\circ\tilde{a}_{A,V\otimes B,B}\circ(\tilde{a}_{A,V,B}\otimes id_B)\\
&=&\varphi\circ(id_A\otimes f)\circ\tilde{a}_{A,V,B}\circ((id_A\otimes id_V)\otimes\tilde{m}_B)\circ\tilde{a}^{-1}_{A,V,B\otimes B}
\circ(id_A\otimes\tilde{a}_{V,B,B})\\
& &\circ\tilde{a}_{A,V\otimes B,B}\circ(\tilde{a}_{A,V,B}\otimes id_B)\\
&=&\phi\circ((id_A\otimes id_V)\otimes\tilde{m}_B)\circ\tilde{a}_{A\otimes V,B,B}\:,
\end{eqnarray*}
where the second equality is a consequence of the left $\tilde{A}$-linearity of $\phi$, the fifth one results from (\ref{cond-1-for-f}) and in the last equality Pentagon Axiom for $\tilde{a}$ has been applied, and $\phi$ satisfies the unity condition in so far as
\begin{eqnarray*}\lefteqn{\phi\circ((id_A\otimes id_V)\otimes \eta_B)}\hspace{3em}\\
&=&(\tilde{m}_A\otimes id_V)\circ\tilde{a}^{-1}_{A,A,V}\circ(id_A\otimes f)\circ\tilde{a}_{A,V,B}\circ((id_A\otimes id_V)\otimes \eta_B)\\
&=&(\tilde{m}_A\otimes id_V)\circ\tilde{a}^{-1}_{A,A,V}\circ(id_A\otimes f)\circ(id_A\otimes (id_V\otimes \eta_B))\circ\tilde{a}_{A,V,I}\\
&=&(\tilde{m}_A\otimes id_V)\circ\tilde{a}^{-1}_{A,A,V}\circ(id_A\otimes f\circ(id_V\otimes \eta_B))\circ\tilde{a}_{A,V,I}\\
&=&(\tilde{m}_A\otimes id_V)\circ\tilde{a}^{-1}_{A,A,V}\circ(id_A\otimes(\eta_A\otimes id_V)\circ\tilde{l}^{-1}_{V}\circ\tilde{r})\circ\tilde{a}_{A,V,I}\\
&=&(\tilde{m}_A\otimes id_V)\circ((id_A\otimes \eta_A)\otimes id_V)\circ\tilde{a}^{-1}_{A,I,V}\circ(id_A\otimes\tilde{l}^{-1}_{V})
\circ(id_A\otimes\tilde{r})\circ\tilde{a}_{A,V,I}\\
&=&(\tilde{m}_A\circ(id_A\otimes \eta_A)\otimes id_V)\circ\tilde{a}^{-1}_{A,I,V}\circ(id_A\otimes\tilde{l}^{-1}_{V})\circ\tilde{r}_{A\otimes V}\\
&=&(\tilde{r}_A\otimes id_V)\circ\tilde{a}^{-1}_{A,I,V}\circ(id_A\otimes\tilde{l}^{-1}_{V})\circ\tilde{r}_{A\otimes V}\\
&=&(id_A\otimes\tilde{l}_{V})\circ\tilde{a}_{A,I,V}\circ\tilde{a}^{-1}_{A,I,V}\circ(id_A\otimes\tilde{l}^{-1}_{V})\circ\tilde{r}_{A\otimes V}\\
&=&\tilde{r}_{A\otimes V}\:,
\end{eqnarray*}
where we have used (\ref{cond-2-for-f}) in the fourth equality and the Triangle Axiom in the penultimate one.
\end{proof}
We now make use of the theorem above, which has been given in the general tensor category context, to prove the undermentioned theorem.

\begin{theorem}\label{bijection-left-cov-bimod}Let $(V,\nu)$ $\in$ $\mathcal{\widetilde{H}}(\mathcal{C})$ and let $(H\otimes V,\alpha\otimes\nu)$ $\in$ $^{H}_H\mathcal{\widetilde{H}}(\mathcal{M})$ with the canonical $\tilde{H}$-module and $\tilde{H}$-comodule structures, that is, with the action  $(\tilde{m}_H\otimes id_V)\tilde{a}^{-1}_{H,H,V}:H\otimes(H\otimes V)\to H\otimes V$ and the coaction $\tilde{a}_{H,H,V}(\tilde{\Delta}\otimes id_V):H\otimes V\to H\otimes(H\otimes V)$. Then there is a bijection between right $\tilde{H}$-module structures making $(H\otimes V,\alpha\otimes\nu)$ a left-covariant $\tilde{H}$-bimodule and right $\tilde{H}$-module structures on $(V,\nu)$.
\end{theorem}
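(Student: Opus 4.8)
The plan is to invoke the preceding theorem with $\tilde A=\tilde B=\tilde H$ and then to single out, among the bimodule structures it produces, precisely those that are in addition left $\tilde H$-colinear. Since the canonical left action $\varphi=(\tilde m_H\otimes id_V)\circ\tilde a^{-1}_{H,H,V}$ is exactly the module structure used there, that theorem yields a bijection between right $\tilde H$-module structures $\phi$ making $(H\otimes V,\alpha\otimes\nu)$ an $(\tilde H,\tilde H)$-bimodule and morphisms $f\colon(V\otimes H,\nu\otimes\alpha)\to(H\otimes V,\alpha\otimes\nu)$ obeying (\ref{cond-1-for-f}) and (\ref{cond-2-for-f}), the inverse assignments being $\phi\mapsto f=\phi\circ\tilde a^{-1}_{H,V,H}\circ(\eta_H\otimes(id_V\otimes id_H))\circ\tilde l^{-1}_{V\otimes H}$ and $f\mapsto\phi=\varphi\circ(id_H\otimes f)\circ\tilde a_{H,V,H}$.

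Now $(H\otimes V,\alpha\otimes\nu)$ already carries its canonical left module and left comodule structures, which satisfy the Hopf-module compatibility (\ref{comp-cond-left-cov-2}) by hypothesis; hence, on top of the bimodule property already built into $f$, making it a left-covariant $\tilde H$-bimodule amounts to the single remaining requirement (\ref{comp-cond-left-cov-3}), that the right action $\phi$ be a morphism of left $\tilde H$-comodules. The first real step is therefore to rephrase (\ref{comp-cond-left-cov-3}) as a condition on $f$. Substituting $\phi=\varphi\circ(id_H\otimes f)\circ\tilde a_{H,V,H}$ and expanding both the canonical coaction $\rho=\tilde a_{H,H,V}\circ(\tilde\Delta\otimes id_V)$ and the diagonal right coaction $\phi'$, I expect (\ref{comp-cond-left-cov-3}) to be equivalent to the left $\tilde H$-colinearity of $f$ itself, where the source $(V\otimes H,\nu\otimes\alpha)$ is endowed with the left codiagonal coaction in which $V$ is a trivial comodule and $H$ the regular one, and $(H\otimes V,\alpha\otimes\nu)$ carries the canonical coaction; in Sweedler shorthand, this is the relation $\rho(f(v\otimes h))=h_1\otimes f(v\otimes h_2)$.

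The second step is to match such colinear $f$ with right $\tilde H$-module structures on $(V,\nu)$. Starting from a right action $\triangleleft\colon V\otimes H\to V$, I put $f(v\otimes h)=h_1\otimes(v\triangleleft h_2)$ (with the appropriate insertion of $\tilde a$, $\alpha$ and $\nu$), verify directly that it is colinear in the above sense, and check that (\ref{cond-1-for-f}) and (\ref{cond-2-for-f}) translate exactly into the associativity and unit axioms of $\triangleleft$. Conversely, from a colinear $f$ I recover a candidate action by applying the counit to the $H$-leg of the output, $\triangleleft=(\varepsilon_H\otimes id_V)\circ f$ up to the unit constraint; the colinearity relation together with Hom-counitality then forces the normal form $f(v\otimes h)=h_1\otimes(v\triangleleft h_2)$, which shows that the two assignments are mutually inverse. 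Composing this correspondence with the bijection of the first step produces the desired bijection between left-covariant $\tilde H$-bimodule structures on $(H\otimes V,\alpha\otimes\nu)$ and right $\tilde H$-module structures on $(V,\nu)$.

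The step I expect to be the main obstacle is rewriting (\ref{comp-cond-left-cov-3}) as the colinearity of $f$: this is where all the Hom-theoretic bookkeeping accumulates, since one must transport the associativity constraints $\tilde a^{\pm1}$, the unit constraints $\tilde l$ and $\tilde r$, the braiding $\tau_{H,H}$ concealed in the diagonal coaction $\phi'$, and the twisting automorphisms $\alpha$ and $\nu$ consistently through a long composition, with ample opportunity to misplace a constraint. By contrast, once colinearity is available the remaining verifications are routine, because colinearity lets me replace $f$ by its normal form $h_1\otimes(v\triangleleft h_2)$ and reduce the matching of (\ref{cond-1-for-f})--(\ref{cond-2-for-f}) with the module axioms to standard Sweedler manipulations. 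Note that only the bialgebra structure and the counit of $\tilde H$ enter, so no appeal to the antipode or to the fundamental theorem is required.
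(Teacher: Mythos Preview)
Your plan is correct and lands on the same reduction as the paper, but you take a slightly different route to get the colinearity of $f$, and it is worth noting that the paper's route sidesteps precisely the step you flag as the ``main obstacle.''

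The paper does not apply the previous theorem in the base category $\widetilde{\mathcal H}(\mathcal C)$ and then separately argue that condition~(\ref{comp-cond-left-cov-3}) is equivalent to the left $\tilde H$-colinearity of $f$. Instead it applies the previous theorem \emph{inside} the monoidal category $^{H}\widetilde{\mathcal H}(\mathcal M)$ of left $\tilde H$-comodules. Since the canonical $\varphi$, the associators, and the unit constraints are all morphisms in $^{H}\widetilde{\mathcal H}(\mathcal M)$, the bijection of the previous theorem, run in that category, directly produces a bijection between right $\tilde H$-actions making $H\otimes V$ a left-covariant bimodule and \emph{left $\tilde H$-colinear} morphisms $f$ satisfying (\ref{cond-1-for-f})--(\ref{cond-2-for-f}). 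This eliminates the long Hom-theoretic rewriting of (\ref{comp-cond-left-cov-3}) that you anticipated; all the $\tilde a^{\pm1}$, $\tilde l$, $\tilde r$, $\tau_{H,H}$ bookkeeping is absorbed into the single observation that the previous theorem is valid in any monoidal category admitting the relevant structures.

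From that point on your argument and the paper's coincide: one uses the adjunction-type bijection
\[
F_X:\ {}^{H}\!\operatorname{Hom}(X,H\otimes V)\longrightarrow \operatorname{Hom}(X,V),\qquad f\mapsto \tilde l_V\circ(\varepsilon_H\otimes id_V)\circ f,
\]
with inverse $g\mapsto (id_H\otimes g)\circ\rho_X$, which is exactly your pair ``apply $\varepsilon_H$ to the $H$-leg'' / ``normal form $f(v\otimes h)=h_1\otimes(v\triangleleft h_2)$.'' The paper then checks, by applying $F_{(V\otimes H)\otimes H}$ and $F_{V\otimes I}$ to both sides, that (\ref{cond-1-for-f}) and (\ref{cond-2-for-f}) correspond to associativity and unitality of $\triangleleft$, just as you propose. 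So your second step is identical to the paper's; the only genuine difference is that the paper's categorical trick spares you the direct verification of colinearity that you identified as the hard part. Your approach is still valid---the equivalence of (\ref{comp-cond-left-cov-3}) with colinearity of $f$ really does follow from the colinearity of $\varphi$ and $\tilde a$---it is just more laborious.
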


\begin{proof}By performing the previous theorem to the left $\tilde{H}$-module $(H\otimes V,\alpha\otimes\nu)$ in the category $^{H}\mathcal{\widetilde{H}}(\mathcal{M})$ we obtain a bijection between right $\tilde{H}$-module structures making $H\otimes V$ an $\tilde{H}$-bimodule and left $\tilde{H}$-colinear morphisms $f: (V\otimes H,\nu\otimes\alpha)\to (H\otimes V,\alpha\otimes\nu)$ fulfilling
\begin{enumerate}
\item\label{cond-1-for-f-AB=H} $f\circ(id_V\otimes \tilde{m}_H)\circ\tilde{a}_{V,H,H}=(\tilde{m}_H\otimes id_V)\circ\tilde{a}^{-1}_{H,H,V}\circ(id_H\otimes f)\circ \tilde{a}_{H,V,H}\circ(f\otimes id_H),$
\item\label{cond-2-for-f-AB=H} $f\circ(id_V\otimes \eta_H)= (\eta_H\otimes id_V)\circ\tilde{l}^{-1}_V\circ\tilde{r}_V.$
\end{enumerate}
For any object $(X,\chi)$ in $^{H}\mathcal{\widetilde{H}}(\mathcal{M})$ with the coaction $\rho:X\to H\otimes X$, there is the bijective mapping $$F_X:\:^{\mathcal{H}}Hom(X, H\otimes V) \to Hom(X,V),\: f\mapsto \tilde{l}_{V}\circ(\varepsilon_H\otimes id_V)\circ f$$
with the inverse given by $g \mapsto (id_H\otimes g)\circ \rho$. Let us take $f: V\otimes H\to H\otimes V$ and put $\psi=\tilde{l}_{V}\circ(\varepsilon_H\otimes id_V)\circ f$, then we get

\begin{eqnarray*}\lefteqn{F_{(V\otimes H)\otimes H}((\tilde{m}_H\otimes id_V)\circ\tilde{a}^{-1}_{H,H,V}\circ(id_H\otimes f)\circ \tilde{a}_{H,V,H}\circ(f\otimes id_H))}\hspace{3em}\\
&=&\tilde{l}_{V}\circ(\varepsilon_H\otimes id_V)\circ(\tilde{m}_H\otimes id_V)\circ\tilde{a}^{-1}_{H,H,V}\circ(id_H\otimes f)\circ \tilde{a}_{H,V,H}\circ(f\otimes id_H)\\
&=&\tilde{l}_{V}\circ(\varepsilon_H\circ\tilde{m}_H\otimes id_V)\circ\tilde{a}^{-1}_{H,H,V}\circ(id_H\otimes f)\circ \tilde{a}_{H,V,H}\circ(f\otimes id_H)\\
&=&\tilde{l}_{V}\circ(\tilde{l}_{I}\circ(\varepsilon_H\otimes\varepsilon_H)\otimes id_V)\circ\tilde{a}^{-1}_{H,H,V}\circ(id_H\otimes f)\circ \tilde{a}_{H,V,H}\circ(f\otimes id_H)\\
&=&\tilde{l}_{V}\circ(\varepsilon_H\otimes id_V)\circ(id_H\otimes \tilde{l}_V)\circ(id_H\otimes(\varepsilon_H\otimes id_V))\circ(id_H\otimes f)\circ \tilde{a}_{H,V,H}\circ(f\otimes id_H)\\
&=&\tilde{l}_{V}\circ(\varepsilon_H\otimes id_V)\circ f\circ(\tilde{l}_V\otimes id_H)\circ((\varepsilon_H\otimes id_V)\otimes id_H)\circ\tilde{a}^{-1}_{H,V,H} \circ\tilde{a}_{H,V,H}\circ(f\otimes id_H)\\
&=&\psi\circ(\psi\otimes id_H),
\end{eqnarray*}
and
\begin{eqnarray*}F_{(V\otimes H)\otimes H}(f\circ(id_V\otimes \tilde{m}_H)\circ\tilde{a}_{V,H,H})&=&\tilde{l}_{V}\circ(\varepsilon_H\otimes id_V)\circ f\circ(id_V\otimes \tilde{m}_H)\circ\tilde{a}_{V,H,H}\\
&=&\psi\circ(id_V\otimes \tilde{m}_H)\circ\tilde{a}_{V,H,H}.
\end{eqnarray*}
Thus the associativity of $\psi$ holds if and only if $F_{(V\otimes H)\otimes H}(f\circ(id_V\otimes \tilde{m}_H)\circ\tilde{a}_{V,H,H})=F_{(V\otimes H)\otimes H}((\tilde{m}_H\otimes id_V)\circ\tilde{a}^{-1}_{H,H,V}\circ(id_H\otimes f)\circ \tilde{a}_{H,V,H}\circ(f\otimes id_H))$, which is equivalent to the relation (\ref{cond-1-for-f-AB=H}) due to the fact that $F_{(V\otimes H)\otimes H}$ is a bijective map. By a similar argument, we get the equivalence between the unity of $\psi$ and the relation (\ref{cond-2-for-f-AB=H}) since $F_{V\otimes I}(f\circ(id_V\otimes\eta_H))=\psi\circ(id_V\otimes \eta_H)$ and $F_{V\otimes I}((\eta_H\otimes id_V)\circ\tilde{l}^{-1}_V\circ\tilde{r}_V)=\tilde{r}_V$ and $F_{V\otimes I}$ is a bijection.
\end{proof}
By applying the theorem above in the opposite category we get
\begin{corollary}\label{bijection-left-cov-bicomod}Let $(V,\nu)$ $\in$ $\mathcal{\widetilde{H}}(\mathcal{C})$ and let $(H\otimes V,\alpha\otimes\nu)$ $\in$ $^{H}_H\mathcal{\widetilde{H}}(\mathcal{M})$ with the canonical $\tilde{H}$-module and $\tilde{H}$-comodule structures. There is a one-to-one correspondence between right $\tilde{H}$-comodule structures on $H\otimes V$ making it a left-covariant $\tilde{H}$-bicomodule and the right $\tilde{H}$-comodule structures on $V$.
\end{corollary}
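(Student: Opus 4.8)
The plan is to obtain the statement from Theorem~\ref{bijection-left-cov-bimod} by a formal duality argument, i.e.\ by applying that theorem in the opposite category. The first step is to record that the construction $\mathcal{C}\mapsto\mathcal{\widetilde{H}(C)}$ is compatible with passage to opposites: the objects of $\mathcal{H}(\mathcal{C}^{op})$ are again pairs $(A,\alpha)$ with $\alpha$ an automorphism, a morphism condition $\beta\circ f=f\circ\alpha$ reverses to the analogous condition in $\mathcal{C}$, and the tensor product and unit object are unchanged, so that $\mathcal{\widetilde{H}(C)}^{op}$ is, as a braided tensor category, (isomorphic to) $\mathcal{\widetilde{H}}(\mathcal{C}^{op})$. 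Here the twisted associativity constraint of $\mathcal{\widetilde{H}}(\mathcal{C}^{op})$ is built from $\tilde{a}^{-1}$ and the unit constraints from $\tilde{l}^{-1},\tilde{r}^{-1}$, so that the Pentagon and Triangle identities invoked in the proof of the theorem underlying Theorem~\ref{bijection-left-cov-bimod} hold verbatim in $\mathcal{\widetilde{H}(C)}^{op}$.

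Next I would set up the translation dictionary. Under $\mathcal{\widetilde{H}(C)}\rightsquigarrow\mathcal{\widetilde{H}(C)}^{op}$ a coalgebra becomes an algebra and a bialgebra remains a bialgebra, so $\tilde{H}$ gives a bialgebra $\tilde{H}^{op}$ in $\mathcal{\widetilde{H}(C)}^{op}$ with $\tilde{m}_H,\tilde{\Delta}_H$ and $\eta_H,\varepsilon_H$ interchanged. A left (resp.\ right) $\tilde{H}$-comodule in $\mathcal{\widetilde{H}(C)}$ is precisely a left (resp.\ right) $\tilde{H}^{op}$-module in $\mathcal{\widetilde{H}(C)}^{op}$, and conversely; in particular the canonical left coaction $\tilde{a}_{H,H,V}\circ(\tilde{\Delta}\otimes id_V)$ on $(H\otimes V,\alpha\otimes\nu)$ becomes the canonical left $\tilde{H}^{op}$-action $(\tilde{m}_{H^{op}}\otimes id_V)\circ\tilde{a}^{-1}$, while the canonical left action becomes the canonical left $\tilde{H}^{op}$-coaction. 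Consequently an object of $^{H}_H\mathcal{\widetilde{H}}(\mathcal{M})$ with its canonical structures is carried to an object of the left Hopf-module category over $\tilde{H}^{op}$ with its canonical structures.

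With the dictionary in place I would check that the three compatibility conditions defining a left-covariant $\tilde{H}$-bicomodule in $^{H}_H\mathcal{\widetilde{H}}(\mathcal{M})^H$ go over, one at a time, to the conditions (\ref{comp-cond-left-cov-1})--(\ref{comp-cond-left-cov-3}) defining a left-covariant $\tilde{H}^{op}$-bimodule: each colinearity requirement is the arrow-reversal of the corresponding linearity requirement, and the diagonal (resp.\ codiagonal) (co)actions $\psi',\psi'',\phi'$ are the reverses of one another. Then I simply invoke Theorem~\ref{bijection-left-cov-bimod} for the bialgebra $\tilde{H}^{op}$ and the object $(V,\nu)$ in $\mathcal{\widetilde{H}(C)}^{op}$: it produces a bijection between right $\tilde{H}^{op}$-module structures making $(H\otimes V,\alpha\otimes\nu)$ a left-covariant $\tilde{H}^{op}$-bimodule and right $\tilde{H}^{op}$-module structures on $(V,\nu)$. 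Reading this bijection back in $\mathcal{\widetilde{H}(C)}$ yields exactly the asserted one-to-one correspondence between right $\tilde{H}$-comodule structures on $H\otimes V$ turning it into a left-covariant $\tilde{H}$-bicomodule and right $\tilde{H}$-comodule structures on $V$.

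The main obstacle I expect is entirely in the bookkeeping of the first two steps rather than in any fresh computation. One must verify that the $\alpha$-twists encoded in $\tilde{a},\tilde{l},\tilde{r}$ (precisely the feature distinguishing the Hom-setting from an ordinary tensor category) behave correctly under arrow reversal, so that the canonical comodule/module structures and the covariance conditions correspond \emph{on the nose} and not merely up to some uncontrolled twist by $\alpha$. Once the compatibility of $\mathcal{\widetilde{H}}(-)$ with $(-)^{op}$ and the matching of structures are secured, no further work is required, since the substantive content already resides in Theorem~\ref{bijection-left-cov-bimod}.
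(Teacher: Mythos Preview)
Your approach is correct and is precisely the one the paper takes: the corollary is obtained by applying Theorem~\ref{bijection-left-cov-bimod} in the opposite category, with the evident dictionary exchanging (co)modules and (co)actions. Your elaboration of the bookkeeping (compatibility of $\mathcal{\widetilde{H}}(-)$ with $(-)^{op}$ and matching of the canonical structures and covariance conditions) simply spells out what the paper leaves implicit.
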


\begin{definition}$(M,\mu)$ $\in$ $\mathcal{\widetilde{H}(C)}$ is called \emph{bicovariant} $\tilde{H}$-\emph{bimodule} if it is a left $\tilde{H}$-module with an action $\psi: H\otimes M \to M$, a right $\tilde{H}$-module with an action $\phi: M\otimes H \to M$, a left $\tilde{H}$-comodule with a coaction $\rho: M \to H\otimes M$ and a right $\tilde{H}$-comodule with a coaction $\sigma: M \to M\otimes H$ such that in addition to (\ref{comp-cond-left-cov-1})-(\ref{comp-cond-left-cov-3}) the following compatibility conditions also hold
 \begin{equation}\rho''\circ\sigma=(id_H\otimes \sigma)\circ\rho,\end{equation}
 \begin{equation}\sigma\circ\psi=\psi''\circ(id_H\otimes \sigma),\end{equation}
 \begin{equation}\sigma\circ\phi=\phi''\circ(\sigma\otimes id_H),\end{equation}
 where $\rho''$ is the codiagonal left coaction of $\tilde{H}$ on $M\otimes H$, $\psi''$ is the diagonal left action of $\tilde{H}$ on $M\otimes H$ and $\phi''$ is the diagonal right action of $\tilde{H}$ on $M\otimes H$.
\end{definition}
We denote by $^{H}_{H}\mathcal{\widetilde{H}}(\mathcal{M})^{H}_H$ the category of bicovariant $\tilde{H}$-bimodules together with those morphisms in $\mathcal{\widetilde{H}(C)}$ that are $\tilde{H}$-linear and $\tilde{H}$-colinear on both sides.

\begin{definition}A \emph{right-right Yetter-Drinfel'd module} $(V,\nu)$ in $\mathcal{\widetilde{H}(C)}$ is a right $\tilde{H}$-module with an action $\psi: V\otimes H \to V$ and a right $\tilde{H}$-comodule with a coaction $\rho: V \to V\otimes H$ such that the following compatibility condition
 \begin{equation*}\psi'\circ(\rho\otimes id_H)=(id_V\otimes \tilde{m}_H)\circ(id_V\otimes\tau_{H,H})\circ\tilde{a}_{V,H,H}\circ(\rho\circ\psi\otimes id_H)\circ\tilde{a}^{-1}_{V,H,H}\circ(id_V\otimes\tau_{H,H})\circ(id_V\otimes \tilde{\Delta}_H)\end{equation*}
 holds, where $\psi'$ is the diagonal action of $\tilde{H}$ on $V\otimes H$. This condition is expressed elementwise as follows, for $h\in H$ and $v\in V$,
 \begin{equation}\label{Y-D-condition}v_{(0)}\lhd \alpha^{-1}(h_1)\otimes\alpha(v_{(1)})h_2=(v\lhd h_2)_{(0)}\otimes h_1(v\lhd h_2)_{(1)},\end{equation}
 if we write $\psi(v\otimes h)=v\lhd h$ and $\rho(v)=v_{(0)}\otimes v_{(1)}$.
\end{definition}
The category of right-right Yetter-Drinfel'd modules together with those morphisms in $\mathcal{\widetilde{H}(C)}$ that are both $\tilde{H}$-linear and $\tilde{H}$-colinear is indicated by $\mathcal{YD}^H_H$.

\begin{theorem}Let $(V,\nu)$ $\in$ $\mathcal{\widetilde{H}}(\mathcal{C})$ and let $(H\otimes V,\alpha\otimes\nu)$ $\in$ $^{H}_H\mathcal{\widetilde{H}}(\mathcal{M})$ with the canonical $\tilde{H}$-module and $\tilde{H}$-comodule structures. Then there is a one-to-one correspondence  between
 \begin{enumerate}
 \item right $\tilde{H}$-module structures and right $\tilde{H}$-comodule structures making $(H\otimes V,\alpha\otimes\nu)$ bicovariant $\tilde{H}$-bimodule,  \item   right-right Yetter-Drinfel'd module structures on $(V,\nu)$.
 \end{enumerate}
\end{theorem}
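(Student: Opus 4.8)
The plan is to reduce the statement to the two correspondences already established in Theorem \ref{bijection-left-cov-bimod} and Corollary \ref{bijection-left-cov-bicomod}, and then to identify the single remaining compatibility as the Yetter-Drinfel'd condition. Since $(H\otimes V,\alpha\otimes\nu)$ is given as the canonical left $\tilde H$-Hopf module, its left action $\psi$ and left coaction $\rho$ are fixed; a bicovariant $\tilde H$-bimodule structure on it is therefore exactly the extra data of a right action $\phi\colon(H\otimes V)\otimes H\to H\otimes V$ and a right coaction $\sigma\colon H\otimes V\to(H\otimes V)\otimes H$ subject to the compatibility relations of the relevant definitions. Note first that (\ref{comp-cond-left-cov-2}) is automatic here, being the defining left Hopf-module relation of the canonical structure.

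First I would separate the relations into packets. The relations (\ref{comp-cond-left-cov-1})--(\ref{comp-cond-left-cov-3}) say precisely that $(H\otimes V,\psi,\phi,\rho)$ is a left-covariant $\tilde H$-bimodule, so by Theorem \ref{bijection-left-cov-bimod} the admissible right actions $\phi$ are in bijection with right $\tilde H$-module structures $\lhd$ on $(V,\nu)$. Dually, the bicomodule relation $\rho''\circ\sigma=(id_H\otimes\sigma)\circ\rho$ together with $\sigma\circ\psi=\psi''\circ(id_H\otimes\sigma)$ (and the automatic left-comodule axiom for $\rho$) say that $(H\otimes V,\rho,\sigma,\psi)$ is a left-covariant $\tilde H$-bicomodule, so by Corollary \ref{bijection-left-cov-bicomod} the admissible right coactions $\sigma$ are in bijection with right $\tilde H$-comodule structures $\rho_V$ on $(V,\nu)$. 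These two bijections are independent, and together they show that giving $\phi$ and $\sigma$ subject to all the conditions except the last is the same as giving a right action $\lhd$ and a right coaction $\rho_V$ on $V$, with no coupling between them so far. What remains is the single relation $\sigma\circ\phi=\phi''\circ(\sigma\otimes id_H)$, i.e.\ the right-$\tilde H$-colinearity of $\phi$.

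The crux is to show that, under the two bijections above, this last relation is equivalent to the Yetter-Drinfel'd compatibility (in the elementwise form (\ref{Y-D-condition})) for $(V,\lhd,\rho_V)$. Here I would insert the explicit inverse correspondences supplied by the proofs of Theorem \ref{bijection-left-cov-bimod} and Corollary \ref{bijection-left-cov-bicomod}: these express $\phi$ and $\sigma$ on $H\otimes V$ as composites built from $\lhd$, $\rho_V$, the canonical structures, the unit $\eta_H$ and counit $\varepsilon_H$, the associativity constraints $\tilde a$, and the braiding $\tau_{H,H}$. Substituting these composites into $\sigma\circ\phi=\phi''\circ(\sigma\otimes id_H)$ and then applying $\varepsilon_H$ to the $H$-leg coming from the free factor (to project from $H\otimes V$ back to $V$, as in the maps $F_X$), the morphism-level identity collapses to a relation between $\lhd$ and $\rho_V$ alone, which I claim is exactly the defining Yetter-Drinfel'd condition.

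The main obstacle is precisely this last translation: unlike the essentially one-sided bookkeeping in the earlier proofs, it genuinely mixes action and coaction and must carry both $\tau_{H,H}$ and the nontrivial constraints $\tilde a$ through every step. In elementwise form it amounts to checking that the powers of $\alpha^{\pm1}$ produced by $\tilde a$ and by $\tilde m_H=\alpha\circ m_H$, $\tilde\Delta_H=\Delta_H\circ\alpha^{-1}$ reorganize into exactly the $\alpha^{\pm1}$ displayed in (\ref{Y-D-condition}). Because every correspondence used is a bijection and the $\varepsilon_H$-projection is inverted against the coaction by $F_X$, all the implications are reversible, so the equivalence of this last relation with (\ref{Y-D-condition}) yields the asserted one-to-one correspondence between bicovariant $\tilde H$-bimodule structures on $(H\otimes V,\alpha\otimes\nu)$ and right-right Yetter-Drinfel'd module structures on $(V,\nu)$.
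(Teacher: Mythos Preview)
Your proposal is correct and follows essentially the same approach as the paper: invoke Theorem~\ref{bijection-left-cov-bimod} and Corollary~\ref{bijection-left-cov-bicomod} to obtain the right action and right coaction on $V$, then reduce the remaining right Hopf-module compatibility $\sigma\circ\phi=\phi''\circ(\sigma\otimes id_H)$ to the Yetter--Drinfel'd condition~(\ref{Y-D-condition}) by an $\varepsilon_H$-projection. The paper carries out this last step by an explicit elementwise computation of both sides for a general $(g\otimes v)\otimes h$ and then specializes to $g=1_H$ with the projection $(\varepsilon_H\otimes(id_V\otimes id_H))\circ\tilde a_{H,V,H}$, which is precisely the translation you identify as the main obstacle.
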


\begin{proof} The right $\tilde{H}$-module structure $v\otimes h\mapsto v\lhd h$ and the right $\tilde{H}$-comodule structure $v\mapsto v_{(0)}\otimes v_{(1)}$ on $(V,\nu)$ are induced by the correspondences in (\ref{bijection-left-cov-bimod}) and (\ref{bijection-left-cov-bicomod}). What is left to finish the proof is to show the equivalence of the right $\tilde{H}$-Hopf module condition on $H\otimes V$ to the compatibility condition (\ref{Y-D-condition}) on $V$. Let's write $\phi':(H\otimes V)\otimes H\to H\otimes V$ for the diagonal right action on $H\otimes V$, $\phi'':((H\otimes V)\otimes H)\otimes H\to (H\otimes V)\otimes H$ for the diagonal right action on $(H\otimes V)\otimes H$ and $\sigma':H\otimes V\to (H\otimes V)\otimes H$ for the codiagonal right coaction on $H\otimes V$. So we have, for $g,h \in H$ and $v \in V$,

\begin{eqnarray}\label{L-right-Hopf-mod-cond}\nonumber\sigma'(\phi'((g\otimes v)\otimes h))&=&\sigma'(\alpha(g)h_1\otimes v\lhd\alpha^{-1}(h_2))\\
\nonumber&=&(\alpha^{-1}((\alpha(g)h_1)_1)\otimes(v\lhd\alpha^{-1}(h_2))_{(0)})\otimes(\alpha(g)h_1)_2\alpha((v\lhd\alpha^{-1}(h_2))_{(1)})\\
\nonumber&=&(g_1\alpha^{-1}(h_{11})\otimes(v\lhd\alpha^{-1}(h_2))_{(0)})\otimes(\alpha(g_2)h_{12})\alpha((v\lhd\alpha^{-1}(h_2))_{(1)})\\
\nonumber&=&(g_1\alpha^{-1}(h_1)\otimes(v\lhd\alpha^{-1}(h_{22}))_{(0)})\otimes(\alpha(g_2)h_{21})\alpha((v\lhd\alpha^{-1}(h_{22}))_{(1)})\\
\nonumber&=&(g_1\alpha^{-1}(h_1)\otimes(v\lhd\alpha^{-1}(h_{22}))_{(0)})\otimes\alpha(g_2)(h_{21}\alpha((v\lhd\alpha^{-1}(h_{22}))_{(1)}))\\
&=&(g_1\alpha^{-1}(h_1)\otimes(v\lhd\alpha^{-1}(h_2)_2)_{(0)})\otimes\alpha(g_2)\alpha(\alpha^{-1}(h_2)_1(v\lhd\alpha^{-1}(h_2)_2)_{(1)}),
\end{eqnarray}

\begin{eqnarray}\label{R-right-Hopf-mod-cond}\nonumber\lefteqn{\phi''((\sigma'\otimes id_H)((g\otimes v)\otimes h))}\hspace{5em}\\
\nonumber&=&\phi''(((\alpha^{-1}(g_1)\otimes v_{(0)})\otimes g_2\alpha(v_{(1)}))\otimes h)\\
\nonumber&=&(\alpha^{-1}(g_1)\otimes v_{(0)})\cdot\alpha^{-1}(h_1)\otimes\alpha(g_2\alpha(v_{(1)}))h_2\\
\nonumber&=&(\alpha(\alpha^{-1}(g_1))\alpha^{-1}(h_1)_1\otimes v_{(0)}\lhd\alpha^{-1}(\alpha^{-1}(h_1)_2))\otimes(\alpha(g_2)\alpha^{2}(v_{(1)}))h_2\\
\nonumber&=&(g_1\alpha^{-1}(h_{11})\otimes v_{(0)}\lhd\alpha^{-1}(\alpha^{-1}(h_{12})))\otimes\alpha(g_2)(\alpha^{2}(v_{(1)})h_2)\\
\nonumber&=&(g_1\alpha^{-1}(h_1)\otimes v_{(0)}\lhd\alpha^{-1}(\alpha^{-1}(h_{21})))\otimes\alpha(g_2)(\alpha^{2}(v_{(1)})h_{22})\\
&=&(g_1\alpha^{-1}(h_1)\otimes v_{(0)}\lhd\alpha^{-1}(\alpha^{-1}(h_2)_1))\otimes\alpha(g_2)\alpha(\alpha(v_{(1)})\alpha^{-1}(h_2)_2).
\end{eqnarray}
Thereby if the condition (\ref{Y-D-condition}) on $(V,\nu)$ holds then the right hand sides of (\ref{L-right-Hopf-mod-cond}) and (\ref{R-right-Hopf-mod-cond}) are equal, and thus the left hand sides of (\ref{L-right-Hopf-mod-cond}) and (\ref{R-right-Hopf-mod-cond}) are equal, that is, the requirement that $(H\otimes V,\alpha\otimes\nu)$ be a right Hopf module is fulfilled. Conversely, if we assume that $H\otimes V$ is a right $\tilde{H}$-Hopf module, then by applying $ (\varepsilon_H\otimes(id_V\otimes id_H))\circ\tilde{a}_{H,V,H}$ to the equation $(\sigma'\circ\phi')((1_H\otimes v)\otimes h)=(\phi''\circ(\sigma'\otimes id_H))((1_H\otimes v)\otimes h)$ we obtain the condition (\ref{Y-D-condition}) on $V$.
\end{proof}

\section{Left-Covariant Hom-Bimodules}

\begin{definition}Let $(A,\alpha)$ and $(B,\beta)$ be two monoidal Hom-algebras. A \emph{left} $(A,\alpha)$, \emph{right} $(B,\beta)$ \emph{Hom-bimodule} consists of an object $(M,\mu) \in \widetilde{\mathcal{H}}(\mathcal{M}_k)$ together with morphisms $\phi:A\otimes M\to M$, $\phi(a\otimes m)=am$ and $\varphi:M\otimes B\to M$, $\varphi(m\otimes b)=mb$, in $\widetilde{\mathcal{H}}(\mathcal{M}_k)$ such that
\begin{equation}\alpha(a)(a'm)=(aa')\mu(m) \qquad and \qquad 1_Am=\mu(m), \end{equation}
\begin{equation}(mb')\beta(b)=\mu(m)(b'b) \qquad and \qquad m1_B=\mu(m) \end{equation}
with the compatibility condition
\begin{equation}(am)\beta(b)=\alpha(a)(mb) \end{equation}
for all $a\in A$, $b\in B$ and $m\in M$.
The fact that $\phi$ and $\varphi$ are morphisms in $\widetilde{\mathcal{H}}(\mathcal{M}_k)$ means that
\begin{equation}\mu(am)=\alpha(a)\mu(m), \end{equation}
\begin{equation}\mu(mb)=\mu(m)\beta(b) \end{equation}
respectively. A morphism $f:(M,\mu)\to (N,\nu)$ in $\widetilde{\mathcal{H}}(\mathcal{M}_k)$ is called a morphism of $[(A,\alpha),(B,\beta)]$-Hom-bimodules if it preserves both $A$-action and $B$-action, that is, \begin{equation} f(am)=af(m)\end{equation} and \begin{equation}f(mb)=f(m)b ,\end{equation} respectively, and the following property is satisfied \begin{equation}f((am)\beta(b))=\alpha(a)(f(m)b),\end{equation} for all $a\in A$, $b\in B$ and $m\in M$.
\end{definition}

\begin{lemma}\label{adjoint-Hom-actions}Let $(H,\alpha)$ be a monoidal Hom-Hopf algebra and $(M,\mu)$ a $(H,\alpha)$-Hom-bimodule. For $h \in H$ and $m\in M$,
\begin{enumerate}
\item the linear map $$M\otimes H\to M,\: m\otimes h\mapsto \widetilde{ad}_R(h)(m)=(S(h_1)\mu^{-1}(m))\alpha(h_2)$$ defines a right $(H,\alpha)$-Hom-module structure on $(M,\mu)$, and
    \item the linear mapping $$H\otimes M\to M,\: h\otimes m\mapsto \widetilde{ad}_L(h)(m)=\alpha(h_1)(\mu^{-1}(m)S(h_2))$$ gives $(M,\mu)$ a left $(H,\alpha)$-Hom-module structure.
\end{enumerate}
\end{lemma}

\begin{proof}
\begin{enumerate}
\item\label{proof-right-adj-Hom-act} We first set $m\lhd h=\widetilde{ad}_R(h)(m)$ for $h \in H$ and $m\in M$. Let $g$ also be in $H$, then
\begin{eqnarray*} \mu(m)\lhd (hg)&=&(S((hg)_1)\mu^{-1}(\mu(m)))\alpha((hg)_2)\\
&=&(S(g_1)S(h_1)m)(\alpha(h_2)\alpha(g_2))\\
&=&(\alpha(S(g_1))(S(h_1)\mu^{-1}(m)))(\alpha(h_2)\alpha(g_2))\\
&=&\alpha^{2}(S(g_1))((S(h_1)\mu^{-1}(m))(h_2g_2))\\
&=&\alpha^{2}(S(g_1))(((\alpha^{-1}(S(h_1))\mu^{-2}(m))h_2)\alpha(g_2))\\
&=&(\alpha(S(g_1))((\alpha^{-1}(S(h_1))\mu^{-2}(m))h_2))\alpha^{2}(g_2))\\
&=&(S(\alpha(g_1))\alpha^{-1}((S(h_1)\mu^{-1}(m))\alpha(h_2)))\alpha^{2}(g_2))\\
&=&(S(\alpha(g_1))\alpha^{-1}(m\lhd h))\alpha(\alpha(g_2))\\
&=&(m\lhd h)\lhd \alpha(g).
\end{eqnarray*}
$m\lhd 1_H=(S(1_H)\mu^{-1}(m))\alpha(1_H)=(1_H\mu^{-1}(m))1_H=m1_H=\mu(m)$, which finishes the proof.
\item The proof is carried out as in (1).
\end{enumerate}
\end{proof}

\begin{remark}\label{adjoint-Hom-actions_remark} Since a monoidal Hom-Hopf algebra $(H,\alpha)$ is a $(H,\alpha)$-Hom bimodule, by taking $(M,\mu)$ as $(H,\alpha)$ in the above lemma, the mappings   $\widetilde{ad}_R$ and $\widetilde{ad}_L$ give us the so-called {\it right} and {\it left adjoint Hom-action} of $(H,\alpha)$ on itself, respectively.
\end{remark}

\begin{definition} Let $(B,\beta)$ be a monoidal Hom-bialgebra. A right $(B,\beta)$-{\it Hom-module algebra} $(A,\alpha)$ is a monoidal Hom-algebra and a right $(B,\beta)$-Hom-module with a Hom-action $\rho_{A}: A\otimes B \to A,\: a\otimes b \mapsto a\lhd b$ such that, for any $a,a' \in A$ and $b\in B$
\begin{equation}\label{module-Hom-algebra-conds}(aa')\lhd b =(a\lhd b_1)(a'\lhd b_2)\:\: and \:\: 1_A\lhd b=\varepsilon(b)1_A. \end{equation}
\end{definition}

\begin{proposition} The right adjoint Hom-action $\widetilde{ad}_R$ (resp. the left adjoint Hom-action $\widetilde{ad}_L$ ) turns the monoidal Hom-Hopf algebra $(H,\alpha)$ into a right $(H,\alpha)$-Hom-module algebra (resp. a left  $(H,\alpha)$-Hom-module algebra).
\end{proposition}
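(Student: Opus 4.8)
The plan is to lean on Lemma~\ref{adjoint-Hom-actions} and then verify only the two extra module-algebra axioms~(\ref{module-Hom-algebra-conds}). Indeed, Lemma~\ref{adjoint-Hom-actions}(1) already supplies the right $(H,\alpha)$-Hom-module structure $g\lhd k=(S(k_1)\alpha^{-1}(g))\alpha(k_2)$, so all that remains is to show that this action is multiplicative and unital with respect to the algebra structure of $H$, i.e.\ that
\begin{equation*}(gh)\lhd k=(g\lhd k_1)(h\lhd k_2)\qquad\text{and}\qquad 1_H\lhd k=\varepsilon(k)1_H\end{equation*}
for all $g,h,k\in H$. The statement for $\widetilde{ad}_L$ is the mirror image, obtained by the symmetric computation with the roles of the two antipode relations $S(k_1)k_2=\varepsilon(k)1_H$ and $k_1S(k_2)=\varepsilon(k)1_H$ interchanged; hence I would spell out only the right adjoint case.

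First I would settle the unit condition, which is immediate: since $\alpha^{-1}(1_H)=1_H$, the weak unitality $a1_H=\alpha(a)$ of~(\ref{Hom-associativity_Weak_unitality}) and the multiplicativity of $\alpha$ give
\begin{equation*}1_H\lhd k=(S(k_1)1_H)\alpha(k_2)=\alpha(S(k_1))\alpha(k_2)=\alpha(S(k_1)k_2)=\varepsilon(k)\alpha(1_H)=\varepsilon(k)1_H,\end{equation*}
where the fourth equality is the antipode relation $S(k_1)k_2=\varepsilon(k)1_H$.

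The substance of the argument is the multiplicativity condition. Here I would begin from the right-hand side
\begin{equation*}(g\lhd k_1)(h\lhd k_2)=\big((S(k_{11})\alpha^{-1}(g))\alpha(k_{12})\big)\big((S(k_{21})\alpha^{-1}(h))\alpha(k_{22})\big)\end{equation*}
and transform it into the left-hand side $(gh)\lhd k=(S(k_1)(\alpha^{-1}(g)\alpha^{-1}(h)))\alpha(k_2)$. Three tools would be used in turn: Hom-coassociativity~(\ref{Hom_coassociativity_Weak_counitality}), which identifies the inner coproduct legs $k_{12}$ and $k_{21}$ as the two halves $x_1,x_2$ of the coproduct of a common middle factor $x$; repeated Hom-associativity $\alpha(a)(bc)=(ab)\alpha(c)$, to regroup the four tensor legs and bring $\alpha(k_{12})$ immediately to the left of $S(k_{21})$; and the antipode relation $k_1S(k_2)=\varepsilon(k)1_H$, to collapse the adjacent pair $\alpha(k_{12})S(k_{21})$ to a counit scalar. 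After this collapse, weak counitality reduces the surviving double coproduct of $k$ back to the single coproduct $k_1\otimes k_2$, the $g$- and $h$-legs amalgamate into $\alpha^{-1}(g)\alpha^{-1}(h)$, and one is left with exactly $(S(k_1)(\alpha^{-1}(g)\alpha^{-1}(h)))\alpha(k_2)$.

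I expect the only real difficulty to be the $\alpha$-bookkeeping, not anything conceptual. Each use of Hom-associativity shifts a factor of $\alpha$ between adjacent legs, so the identity closes up only if every such shift is matched by a compensating $\alpha^{\pm1}$ coming from the definition of $\lhd$, from the twisted comultiplication $\Delta(\alpha^{-1}(x))=\alpha^{-1}(x_1)\otimes\alpha^{-1}(x_2)$, or from $S\alpha=\alpha S$. In particular, maneuvering the middle pair into the precise shape $\alpha(x_1)S(x_2)$ required by the antipode relation---up to the correct power of $\alpha$---is the step demanding the most care; once that alignment is secured the collapse and the final counit reduction are routine, and the left adjoint case then follows by the entirely analogous computation.
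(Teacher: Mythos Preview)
Your proposal is correct and follows essentially the same route as the paper: both invoke Lemma~\ref{adjoint-Hom-actions} for the module structure, then verify the two axioms in~(\ref{module-Hom-algebra-conds}) by expanding $(g\lhd k_1)(h\lhd k_2)$, regrouping via Hom-associativity until the middle legs $k_{12}$ and $S(k_{21})$ sit adjacent, invoking a Hom-coassociativity identity to recognise them as the two halves of a coproduct (up to powers of $\alpha$), and collapsing with $x_1S(x_2)=\varepsilon(x)1_H$. The paper records the specific coassociativity rearrangement $h_1\otimes h_{211}\otimes h_{212}\otimes h_{22}=\alpha(h_{11})\otimes\alpha^{-1}(h_{12})\otimes\alpha^{-1}(h_{21})\otimes h_{22}$ to make the $\alpha$-bookkeeping explicit, which is exactly the alignment step you flag as the delicate point.
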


\begin{proof} We prove only the case of $\widetilde{ad}_R$. Since we have already verified in the Lemma (\ref{adjoint-Hom-actions}) and Remark (\ref{adjoint-Hom-actions_remark}) that $\widetilde{ad}_R$ determines a right $(H,\alpha)$-Hom-module structure on itself, we are left to prove that the conditions in (\ref{module-Hom-algebra-conds}) are accomplished: In fact,

\begin{eqnarray*}(g\lhd k_1)(h\lhd k_2)&=&((S(k_{11})\alpha^{-1}(g))\alpha(k_{12}))((S(k_{21})\alpha^{-1}(h))\alpha(k_{22}))\\
&=&((S(k_{11})\alpha^{-1}(g))\alpha(k_{12}))(\alpha(S(k_{21}))(\alpha^{-1}(h)k_{22}))\\
&=&(S(\alpha(k_{11}))g)(\alpha(k_{12})(S(k_{21})(\alpha^{-2}(h)\alpha^{-1}(k_{22}))))\\
&=&(S(\alpha(k_{11}))g)((k_{12}S(k_{21}))(\alpha^{-1}(h)k_{22}))\\
&=&(S(k_1)g)((\alpha(k_{211})S(\alpha(k_{212})))(\alpha^{-1}(h)k_{22}))\\
&=&(S(k_1)g)(\alpha(\varepsilon(k_{21})1_H)(\alpha^{-1}(h)k_{22}))\\
&=&(S(k_1)g)(1_H(\alpha^{-1}(h)\alpha^{-1}(k_2)))\\
&=&(S(k_1)g)(hk_2)\\
&=&\alpha(S(k_1))((\alpha^{-1}(g)\alpha^{-1}(h))k_2)\\
&=&(S(k_1)\alpha^{-1}(gh))\alpha(k_2)\\
&=&(gh)\lhd k,
\end{eqnarray*}
where the fifth line is a consequence of
\begin{equation}h_1\otimes h_{211}\otimes h_{212}\otimes h_{22}=\alpha(h_{11})\otimes\alpha^{-1} (h_{12})\otimes\alpha^{-1}( h_{21})\otimes h_{22},\end{equation}
which follows from the relation
\begin{equation}(id\otimes(\Delta\otimes id))\circ(id\otimes\Delta)\circ\Delta=(id\otimes\tilde{a}^{-1}_{H,H,H})\circ\tilde{a}_{H,H,H\otimes H}\circ(id_{H\otimes H}\otimes\Delta)\circ(\Delta\otimes id)\circ\Delta,\end{equation}
and
\begin{eqnarray*}1_H\lhd h&=&(S(h_1)\alpha^{-1}(1_H))\alpha(h_2)=\alpha(S(h_1))\alpha(h_2)\\
&=&\alpha(\varepsilon(h)1)=\varepsilon(h)1_H.
\end{eqnarray*}
In the case of $\widetilde{ad}_L$, similar computations are performed.
\end{proof}

\begin{definition}A \emph{left-covariant} $(H,\alpha)$-\emph{Hom-bimodule} is an $(H,\alpha)$-Hom-bimodule $(M,\mu) \in \widetilde{\mathcal{H}}(\mathcal{M}_k)$ which is a left $(H,\alpha)$-Hom-comodule, with Hom-coaction $\rho: M\to H\otimes M$, $m\mapsto m_{(-1)}\otimes m_{(0)}$, in $\widetilde{\mathcal{H}}(\mathcal{M}_k)$ such that
\begin{equation}\rho((hm)\alpha(g))=\Delta(\alpha(h))(\rho(m)\Delta(g)).\end{equation}
\end{definition}

We here recall the left coinvariant of $(H,\alpha)$ on $(M,\mu)$ for a left $(H,\alpha)$-Hom-Hopf module $(M,\mu)$, $^{coH}M=\{m\in M|\rho(m)=1_H\otimes \mu^{-1}(m)\}$, which is in $\widetilde{\mathcal{H}}(\mathcal{M}_k)$.

\begin{lemma}Let $(M,\mu)$ be a left-covariant Hom-bimodule over $(H,\alpha)$. There exists a unique $k$-linear projection $P_L:M\longrightarrow\: ^{coH}M$, $m\mapsto S(m_{(-1)})m_{(0)}$, in $\widetilde{\mathcal{H}}(\mathcal{M}_k)$, such that, for all $h\in H$ and $m\in M,$
\begin{equation}\label{structure-eqn-left-cov-Hom-bimod1}P_L(hm)=\varepsilon(h)\mu(P_L(m)). \end{equation}
We also have the following relations
\begin{equation}\label{structure-eqn-left-cov-Hom-bimod2}m=m_{(-1)}P_L( m_{(0)}), \end{equation}
\begin{equation}\label{structure-eqn-left-cov-Hom-bimod3}P_L(mh)=\widetilde{ad}_R(h)P_L(m). \end{equation}
\end{lemma}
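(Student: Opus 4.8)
The plan is to exploit the left-covariance identity $\rho((hm)\alpha(g))=\Delta(\alpha(h))(\rho(m)\Delta(g))$ through its two natural specializations. Setting $g=1_H$ recovers the left Hom-Hopf-module compatibility $\rho(hm)=h_1m_{(-1)}\otimes h_2m_{(0)}$, while setting $h=1_H$ yields the right compatibility $\rho(mg)=m_{(-1)}g_1\otimes m_{(0)}g_2$ (up to the $\alpha$-twists coming from $\tilde{a}$ and from $\rho$ being a morphism). I will also use the left-comodule coassociativity in the form $m_{(-1)1}\otimes\alpha^{-1}(m_{(-1)2})\otimes\mu^{-1}(m_{(0)})=m_{(-1)}\otimes m_{(0)(-1)}\otimes m_{(0)(0)}$ and the counit relation $\varepsilon(m_{(-1)})m_{(0)}=\mu^{-1}(m)$, both read off from the Hom-comodule axioms. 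First I would record that $P_L=\phi\circ(S\otimes id_M)\circ\rho$, with $\phi$ the left action, is a morphism in $\widetilde{\mathcal{H}}(\mathcal{M}_k)$ (so $\mu\circ P_L=P_L\circ\mu$), since $S$, $\rho$ and $\phi$ all intertwine the structure automorphisms. For $m\in{}^{coH}M$ one has $\rho(m)=1_H\otimes\mu^{-1}(m)$, hence $P_L(m)=S(1_H)\mu^{-1}(m)=1_H\mu^{-1}(m)=m$; thus $P_L$ fixes ${}^{coH}M$ pointwise and will be a genuine projection once its image is shown to lie in ${}^{coH}M$.

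The technical core is to verify $P_L(m)\in{}^{coH}M$, that is, $\rho(P_L(m))=1_H\otimes\mu^{-1}(P_L(m))$. I would apply $\rho$ to $S(m_{(-1)})m_{(0)}$, push it through the left action by means of the $g=1_H$ compatibility, and rewrite $\Delta(S(m_{(-1)}))$ by the antipode coproduct rule $\Delta(S(h))=S(h_2)\otimes S(h_1)$. Feeding in coassociativity to align the legs of $\Delta(m_{(-1)})$ with $\rho(m_{(0)})$, the inner factor becomes an expression of the shape $S(m_{(-1)2})\,m_{(-1)3}$, which collapses via the convolution identity $S(h_1)h_2=\varepsilon(h)1_H$ to leave $1_H$ in the first tensorand and $S(m_{(-1)})m_{(0)}$, suitably $\mu^{-1}$-twisted, in the second. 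This is the Hom-analogue of the classical computation $\rho(S(m_{(-1)})m_{(0)})=1\otimes S(m_{(-1)})m_{(0)}$, and it certifies coinvariance.

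The three displayed relations then follow by the same machinery. For $P_L(hm)=\varepsilon(h)\mu(P_L(m))$, expand $P_L(hm)=S((hm)_{(-1)})(hm)_{(0)}$, substitute $(hm)_{(-1)}\otimes(hm)_{(0)}=h_1m_{(-1)}\otimes h_2m_{(0)}$, use $S(h_1m_{(-1)})=S(m_{(-1)})S(h_1)$ with Hom-associativity, and contract $S(h_1)h_2=\varepsilon(h)1_H$ to extract the scalar $\varepsilon(h)$ and the surviving $\mu$-twist. For the reconstruction $m=m_{(-1)}P_L(m_{(0)})$, write the right side as $m_{(-1)}(S(m_{(0)(-1)})m_{(0)(0)})$, apply coassociativity to replace the triple $(m_{(-1)},m_{(0)(-1)},m_{(0)(0)})$ by $(m_{(-1)1},\alpha^{-1}(m_{(-1)2}),\mu^{-1}(m_{(0)}))$, and collapse $m_{(-1)1}S(m_{(-1)2})=\varepsilon(m_{(-1)})1_H$ together with the counit relation to return $m$. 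Uniqueness is immediate once reconstruction is in hand: any $k$-linear projection $Q$ onto ${}^{coH}M$ with $Q(hn)=\varepsilon(h)\mu(Q(n))$ agrees with $P_L$ on every element $m=m_{(-1)}P_L(m_{(0)})$, since both send $h\otimes n$ with $n$ coinvariant to $\varepsilon(h)\mu(n)$.

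Finally, the adjoint formula $P_L(mh)=\widetilde{ad}_R(h)P_L(m)$ is obtained from the $h=1_H$ specialization: starting from $P_L(mh)=S((mh)_{(-1)})(mh)_{(0)}$ and inserting $(mh)_{(-1)}\otimes(mh)_{(0)}=m_{(-1)}h_1\otimes m_{(0)}h_2$, the antipode reverses $S(m_{(-1)}h_1)=S(h_1)S(m_{(-1)})$, and a rearrangement by Hom-associativity brings the result into the shape $(S(h_1)\alpha^{-1}(P_L(m)))\alpha(h_2)$, which is precisely $\widetilde{ad}_R(h)P_L(m)$ as defined in Lemma \ref{adjoint-Hom-actions}. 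The main obstacle throughout is not conceptual but combinatorial: every use of coassociativity, of $\Delta\circ S$, and of the convolution identities must be accompanied by the correct powers of $\alpha$ and $\mu$ dictated by the constraints $\tilde{a}$, $\tilde{l}$, $\tilde{r}$, and keeping these twists balanced---so that the antipode identities apply at the matching $\alpha$-level---is where the care is required.
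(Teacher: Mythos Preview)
Your proposal is correct and follows essentially the same route as the paper's proof: both verify coinvariance of $P_L(m)$ by expanding $\rho(S(m_{(-1)})m_{(0)})$ via the left Hopf-module compatibility and collapsing with $S(h_1)h_2=\varepsilon(h)1_H$, derive the three displayed identities from the two specializations of the covariance relation together with Hom-coassociativity, and deduce uniqueness from the reconstruction formula. Your caveat about tracking the $\alpha,\mu$-twists is well placed (indeed your stated form of coassociativity has the $\alpha^{-1}$ on the wrong tensor leg relative to the paper's convention $\alpha^{-1}(m_{(-1)})\otimes m_{(0)(-1)}\otimes m_{(0)(0)}=m_{(-1)1}\otimes m_{(-1)2}\otimes\mu^{-1}(m_{(0)})$), but this is exactly the bookkeeping you flag and does not affect the argument.
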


\begin{proof} We show that $P_L(m)$ is in $^{coH}M: $ Indeed,
\begin{eqnarray*}\rho(P_L(m))&=&\rho(S(m_{(-1)})m_{(0)})=(S(m_{(-1)})m_{(0)})_{(-1)}\otimes(S(m_{(-1)})m_{(0)})_{(0)}\\
&=&S(m_{(-1)})_1m_{(0)(-1)}\otimes S(m_{(-1)})_2m_{(0)(0)}\\
&=&S(m_{(-1)2})m_{(0)(-1)}\otimes S(m_{(-1)1})m_{(0)(0)}\\
&=&S(\alpha(m_{(0)(-1)1}))\alpha(m_{(0)(-1)2})\otimes S(\alpha^{-1}(m_{(-1)})m_{(0)(0)}\\
&=&\alpha(S(m_{(0)(-1)1})m_{(0)(-1)2})\otimes \alpha^{-1}(S(m_{(-1)}))m_{(0)(0)}\\
&=&\alpha(\varepsilon(m_{(0)(-1)}1_H))\otimes \alpha^{-1}(S(m_{(-1)}))m_{(0)(0)}\\
&=&1_H\otimes \alpha^{-1}(S(m_{(-1)}))\varepsilon(m_{(0)(-1)})m_{(0)(0)}\\
&=&1_H\otimes \alpha^{-1}(S(m_{(-1)}))\mu^{-1}(m_{(0)})\\
&=&1_H\otimes \mu^{-1}(S(m_{(-1)}m_{(0)}))=1_H\otimes \mu^{-1}(P_L(m)),
\end{eqnarray*}
where in the fifth equality we have used
\begin{equation}m_{(-1)1}\otimes m_{(-1)2}\otimes m_{(0)(-1)}\otimes m_{(0)(0)}=\alpha^{-1}(m_{(-1)})\otimes \alpha(m_{(0)(-1)1}) \otimes \alpha(m_{(0)(-1)2})\otimes m_{(0)(0)},\end{equation}
which results from the fact that the following relation holds:
\begin{equation}(\Delta\otimes id)\circ(id\otimes\rho)\circ\rho=\tilde{a}^{-1}_{H,H,H\otimes M}\circ (id\otimes\tilde{a}_{H,H,M})\circ(id\otimes(\Delta\otimes id))\circ(id\otimes\rho)\circ \rho.\end{equation}
Now we prove that $M=H\cdot ^{coH}M$
\begin{eqnarray*} m_{(-1)}P_L(m_{(0)})&=& m_{(-1)}(S(m_{(0)(-1)})m_{(0)(0)})\\
&=&(\alpha^{-1}(m_{(-1)})S(m_{(0)(-1)}))\mu(m_{(0)(0)})\\
&=& (m_{(-1)1}S(m_{(-1)2}))m_{(0)}\\
&=&\varepsilon(m_{(-1)}1_Hm_{(0)}\\
&=&\mu(\varepsilon(m_{(-1)}m_{(0)})\\
&=&\mu(\mu^{-1}(m))=m,
\end{eqnarray*}
where we have used the Hom-coassociativity condition for the left Hom-comodules in the third equation.
\begin{eqnarray*} P_L(hm)&=& S(h_1m_{(-1)})(h_2m_{(0)})\\
&=&(S(m_{(-1)})S(h_1))(h_2m_{(0)})\\
&=& \alpha((S(m_{(-1)}))(S(h_1)(\alpha^{-1}(h_2)\mu^{-1}m_{(0)}))\\
&=&\alpha((S(m_{(-1)}))((\alpha^{-1}(S(h_1))\alpha^{-1}(h_2))m_{(0)})\\
&=&\alpha((S(m_{(-1)}))((\alpha^{-1}(S(h_1)h_2)m_{(0)})\\
&=&\varepsilon(h)(S(m_{(-1)})1_H)\mu(m_{0}\\
&=&\varepsilon(h)\mu(S(m_{(-1)}m_{0}))=\varepsilon(h)\mu(P_L(m)).
\end{eqnarray*}

\begin{eqnarray*} P_L(mh)&=& S(m_{(-1)}h_1)(m_{(0)}h_2)\\
&=&(S(h_1)S(m_{(-1)}))(m_{(0)}h_2)\\
&=&((\alpha^{-1}(S(h_1))\alpha^{-1}(S(m_{(-1)})))m_{(0)})\alpha(h_2)\\
&=&(S(h_1)(\alpha^{-1}(S(m_{(-1)}))\mu^{-1}(m_{(0)})))\alpha(h_2)\\
&=&(S(h_1)\mu^{-1}(S(m_{(-1)})m_{(0)}))\alpha(h_2)\\
&=&\widetilde{ad}_R(h)P_L(m).
\end{eqnarray*}
If $m$ belongs to $^{coH}M$, then
$$P_L(m)=S(1_H)\mu^{-1}(m)=m$$
proving that $P_L$ is a $k$-projection of $M$ onto $^{coH}M$.
Let $P'_L:M\longrightarrow ^{coH}M$ be another $k$-projection such that $P'_L(hm)=\varepsilon(h)\mu(P'_L(m))$, then, by the fact that $P'_L$ is a morphism in $\widetilde{\mathcal{H}}(\mathcal{M}_k),$ we have
\begin{eqnarray*}P'_L(m)&=&P'_L(m_{(-1)}P_L( m_{(0)}))=\varepsilon(m_{(-1)})\mu(P'_L(P_L( m_{(0)}))\\
&=&\varepsilon(m_{(-1)})\mu(P_L( m_{(0)}))=P_L(\mu(\varepsilon(m_{(-1)})m_{(0)}))\\
&=&P_L(\mu(\mu^{-1}(m)))=P_L(m),\end{eqnarray*}
which shows the uniqueness of $P_L.$
\end{proof}
\begin{proposition}\label{left-cov-structure}Let $(N,\nu)\in\widetilde{\mathcal{H}}(\mathcal{M}_k)$ be a right $(H,\alpha)$-Hom-module by the $H$-action $N\otimes H\to N,\: n\otimes h\mapsto n\lhd h$. The following morphisms
\begin{equation}\label{left-Hom-action}H\otimes(H\otimes N)\to H\otimes N,\: h\otimes(g\otimes n)\mapsto \alpha^{-1}(h)g\otimes \nu(n),  \end{equation}
\begin{equation}\label{right-Hom-action}(H\otimes N)\otimes H\to H\otimes N,\: (h\otimes n)\otimes g\mapsto hg_1\otimes n\lhd g_2,  \end{equation}
\begin{equation}\label{left-Hom-coaction}\rho:H\otimes N\to H\otimes(H\otimes N),\: h\otimes n\mapsto \alpha(h_1)\otimes(h_2\otimes\nu^{-1}(n)). \end{equation}
in $\widetilde{\mathcal{H}}(\mathcal{M}_k)$, define a left-covariant $(H,\alpha)$-Hom-bimodule structure on $(H\otimes N , \alpha\otimes\nu)$.
\end{proposition}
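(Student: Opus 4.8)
The plan is to recognize the three operations (\ref{left-Hom-action})--(\ref{left-Hom-coaction}) as the explicit ($\mathcal{C}=\mathcal{M}_k$) form of the canonical structures appearing in Theorem \ref{bijection-left-cov-bimod}. Indeed, evaluating $(\tilde m_H\otimes id_N)\circ\tilde a^{-1}_{H,H,N}$ on $h\otimes(g\otimes n)$ via $\tilde a^{-1}_{H,H,N}(h\otimes(g\otimes n))=(\alpha^{-1}(h)\otimes g)\otimes\nu(n)$ returns exactly (\ref{left-Hom-action}), and $\tilde a_{H,H,N}\circ(\tilde\Delta\otimes id_N)$ on $h\otimes n$ returns $\alpha(h_1)\otimes(h_2\otimes\nu^{-1}(n))$, which is (\ref{left-Hom-coaction}). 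Consequently the left $(H,\alpha)$-Hom-module axioms for (\ref{left-Hom-action}) reduce to Hom-associativity and weak unitality (\ref{Hom-associativity_Weak_unitality}) of $(H,\alpha)$ together with the invertibility of $\nu$, while the left $(H,\alpha)$-Hom-comodule axioms for (\ref{left-Hom-coaction}) reduce to Hom-coassociativity and weak counitality (\ref{Hom_coassociativity_Weak_counitality}); that $\rho$ is a morphism in $\widetilde{\mathcal H}(\mathcal M_k)$ follows from $\Delta\circ\alpha=(\alpha\otimes\alpha)\circ\Delta$. What remains, and where the actual work is, is to check that (\ref{right-Hom-action}) is a right Hom-module structure compatible with (\ref{left-Hom-action}), and that the left-covariance condition $\rho((hm)\alpha(g))=\Delta(\alpha(h))(\rho(m)\Delta(g))$ holds.

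First I would treat the right action $(h\otimes n)\lhd g=hg_1\otimes n\lhd g_2$. Inserting $\Delta(gg')=\Delta(g)\Delta(g')$ and $\Delta(\alpha(g'))=\alpha(g_1')\otimes\alpha(g_2')$, the right-module associativity $(\alpha\otimes\nu)(h\otimes n)\lhd(gg')=((h\otimes n)\lhd g)\lhd\alpha(g')$ splits into the Hom-associativity identity $\alpha(h)(g_1g_1')=(hg_1)\alpha(g_1')$ on the $H$-slot and the right $N$-module axiom $\nu(n)\lhd(g_2g_2')=(n\lhd g_2)\lhd\alpha(g_2')$ on the $N$-slot; weak unitality follows from $\Delta(1_H)=1_H\otimes1_H$, $h1_H=\alpha(h)$, and $n\lhd1_H=\nu(n)$. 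In the same manner the morphism property of (\ref{right-Hom-action}) and the bimodule compatibility $(am)\alpha(b)=\alpha(a)(mb)$ between (\ref{left-Hom-action}) and (\ref{right-Hom-action}) reduce to the algebra axioms of $(H,\alpha)$ on the $H$-factor and to the morphism property $\nu(n\lhd b)=\nu(n)\lhd\alpha(b)$ of $\lhd$ on the $N$-factor.

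The main obstacle is the left-covariance condition, which I would verify on a general $m=p\otimes n$. On the left-hand side one computes $(hm)\alpha(g)=(\alpha^{-1}(h)p)\alpha(g_1)\otimes\nu(n)\lhd\alpha(g_2)$, applies (\ref{left-Hom-coaction}), expands $\Delta\big((\alpha^{-1}(h)p)\alpha(g_1)\big)$ by multiplicativity of $\Delta$, and simplifies $\nu^{-1}(\nu(n)\lhd\alpha(g_2))=n\lhd g_2$, obtaining an explicit element of $H\otimes(H\otimes N)$. The right-hand side forces one to make fully explicit the tensor-product (diagonal and codiagonal) $(H,H)$-bimodule structure on $H\otimes(H\otimes N)$ through which the factors of $\Delta(\alpha(h))$ and $\Delta(g)$ act; tracking the powers of $\alpha$ produced by the associativity and unit constraints across the three tensor factors is the delicate bookkeeping here. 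Matching the two sides then rests only on Hom-coassociativity (\ref{Hom_coassociativity_Weak_counitality}), multiplicativity of $\Delta$, the compatibility $\Delta\circ\alpha=(\alpha\otimes\alpha)\circ\Delta$, and the single right $N$-module axiom, after which the two expressions agree term by term.

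Finally, I note that this last, most laborious step can be bypassed altogether by appealing to Theorem \ref{bijection-left-cov-bimod}: since (\ref{left-Hom-action}) and (\ref{left-Hom-coaction}) are the canonical left $\tilde H$-module and $\tilde H$-comodule structures, and (\ref{right-Hom-action}) is precisely the right action that the bijection of that theorem assigns to the given right $N$-action $\lhd$, the left-covariant bimodule axioms are automatic; on this route the only thing to check is the identification of the formula (\ref{right-Hom-action}) with the image of $\lhd$ under the bijection, which is a short calculation with the maps $f$ and $F_X$ of that proof.
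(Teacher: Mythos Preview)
Your direct-verification approach---checking the right Hom-module axioms for (\ref{right-Hom-action}), the bimodule compatibility with (\ref{left-Hom-action}), and the left-covariance identity $\rho((g(h\otimes n))\alpha(k))=\Delta(\alpha(g))(\rho(h\otimes n)\Delta(k))$ by expanding both sides and matching terms---is exactly what the paper does; in fact the paper even writes out the left module and left comodule axioms explicitly rather than invoking the canonical identification you use in your first paragraph, so your treatment there is slightly more economical but otherwise identical in substance.

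Your final paragraph, however, offers a genuinely different route. The paper does \emph{not} invoke Theorem~\ref{bijection-left-cov-bimod} in proving this proposition; it gives a self-contained computational proof and only records the connection afterwards in Remark~\ref{one-to-one-LeftCov-RightMod}. You instead observe that once (\ref{left-Hom-action}) and (\ref{left-Hom-coaction}) are recognized as the canonical $\tilde H$-module and $\tilde H$-comodule structures, Theorem~\ref{bijection-left-cov-bimod} guarantees the full left-covariant bimodule package as soon as one identifies (\ref{right-Hom-action}) with the image of $\lhd$ under that bijection. Both approaches are valid: the paper's explicit calculation keeps the proposition independent of the abstract correspondence and displays the $\alpha$-bookkeeping concretely, while your shortcut leverages the machinery of Section~3 to bypass the longest computation entirely.
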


\begin{proof} We verify the Hom-associativity and Hom-unity conditions for the left and the right Hom-multiplications of $(H,\alpha)$ on $(H\otimes N,\alpha\otimes\nu)$ , respectively: For all $h,k,g \in H$ and $n\in N$, we get

\begin{eqnarray*}\alpha(k)(h(g\otimes n))&=&\alpha(k)(\alpha^{-1}(h)g\otimes \nu(n))=k((\alpha^{-1}(h)g)\otimes \nu^{2}(n) \\
&=&\alpha^{-1}(kh)\alpha(g)\otimes \nu^{2}(n)=(kh)((\alpha\otimes\nu)(g\otimes n)),
\end{eqnarray*}
$$1_H(g\otimes n)=\alpha^{-1}(1_H)g\otimes \nu(n)=\alpha(g)\otimes \nu(n)=(\alpha\otimes\nu)(g\otimes n),$$
\begin{eqnarray*}((\alpha\otimes\nu)(h\otimes n))(gk)&=&\alpha(h)(g_1k_1)\otimes \nu(n)\lhd(g_2k_2)=(hg_1)\alpha(k_1)\otimes (n\lhd g_2)\lhd \alpha(k_2)\\
&=&(hg_1\otimes n\lhd g_2)\alpha(k)=((h\otimes n)g)\alpha(k),
\end{eqnarray*}
$$(h\otimes n)1_H=h1_H\otimes n\lhd 1_H=(\alpha\otimes\nu)(h\otimes n).$$

We now show that the compatibility condition is satisfied:
\begin{eqnarray*}(g(h\otimes n))\alpha(k)&=&(\alpha^{-1}(g)h\otimes \nu(n))\alpha(k)=(\alpha^{-1}(g)h)\alpha(k_1)\otimes \nu(n)\lhd\alpha(k_2)\\
&=&g(hk_1)\otimes \nu(n)\lhd\alpha(k_2)=\alpha^{-1}(\alpha(g))(hk_1)\otimes \nu(n\lhd k_2)\\
&=&\alpha(g)(hk_1\otimes n\lhd k_2)=\alpha(g)((h\otimes n )k).
\end{eqnarray*}

$\rho$ satisfies the Hom-coassociativity and Hom-counity condition: Indeed, on one hand we have
\begin{eqnarray*}\Delta((h\otimes n)_{(-1)})\otimes (\alpha^{-1}\otimes\nu^{-1})((h\otimes n)_{(0)})&=&\Delta(\alpha(h_1))\otimes (\alpha^{-1}\otimes\nu^{-1})(h_2\otimes \nu^{-1}(n))\\
&=&(\alpha(h_{11})\otimes\alpha(h_{12}))\otimes(\alpha^{-1}(h_2)\otimes\nu^{-2}(n))\\
&=&(h_1\otimes\alpha(h_{21}))\otimes(h_{22}\otimes\nu^{-2}(n))\\
&=&(\alpha^{-1}((h\otimes n)_{(-1)})\otimes(h\otimes n)_{(0)(-1)} )\otimes(h\otimes n)_{(0)(0)},
\end{eqnarray*}
in the third equality we have used the relation
 \begin{equation}\label{relation-left-coass-tens-pro}\alpha(h_{11})\otimes\alpha(h_{12})\otimes h_2\otimes \nu^{-1}(n)=h_1\otimes \alpha(h_{21})\otimes \alpha(h_{22})\otimes \nu^{-1}(n), \end{equation}
which follows from
$$(\Delta\otimes id)\circ\rho=\tilde{a}^{-1}_{H,H,H\otimes N}\circ (id\otimes\tilde{a}_{H,H,N})\circ(id\otimes(\Delta\otimes id))\circ \rho.$$
On the other hand,
\begin{eqnarray*}\varepsilon((h\otimes n)_{(-1)})(h\otimes n)_{(0)}&=&\varepsilon(\alpha(h_1))(h_2\otimes \nu^{-1}(n))\\
&=&\varepsilon(h_1)h_2\otimes \nu^{-1}(n)=(\alpha^{-1}\otimes \nu^{-1})(h\otimes n).
\end{eqnarray*}

To finish the proof of the fact that the above Hom-actions and Hom-coaction of $(H,\alpha)$ on $H\otimes N$ define a left-covariant $(H,\alpha)$-Hom-bimodule structure on $(H\otimes N, \alpha\otimes\nu)$ we show that the following relation holds:

\begin{eqnarray*} \Delta(\alpha(g))(\rho(h\otimes n)\Delta(k))&=&(\alpha(g_1)\otimes\alpha(g_2))((\alpha(h_1)\otimes(h_2\otimes\nu^{-1}(n)))(k_1\otimes k_2))\\
&=&(\alpha(g_1)\otimes\alpha(g_2))(\alpha(h_1)k_1\otimes(h_2\otimes\nu^{-1}(n))k_2)\\
&=&\alpha(g_1)(\alpha(h_1)k_1)\otimes\alpha(g_2)((h_2\otimes\nu^{-1}(n))k_2)\\
&=&\alpha(g_1)(\alpha(h_1)k_1)\otimes(\alpha^{-1}(\alpha(g_2))(h_2k_{21})\otimes\nu(\nu^{-1}(n)\lhd k_{22}))\\
&=&\alpha(g_1)(\alpha(h_1)k_1)\otimes(g_2(h_2k_{21})\otimes n\lhd \alpha( k_{22}))\\
&=&\alpha(g_1)(\alpha(h_1)\alpha(k_{11}))\otimes(g_2(h_2k_{12})\otimes n\lhd k_2)\\
&=&(g_1\alpha(h_1))\alpha^{2}(k_{11})\otimes((\alpha^{-1}(g_2)h_2)\alpha(k_{12})\otimes\nu^{-1}(\nu(n)\lhd \alpha(k_2)))\\
&=&\alpha(((\alpha^{-1}(g)h)\alpha(k_1))_1)\otimes(((\alpha^{-1}(g)h)\alpha(k_1))_2\otimes\nu^{-1}(\nu(n)\lhd \alpha(k_2)))\\
&=&\rho((\alpha^{-1}(g)h)\alpha(k_1)\otimes\nu(n)\lhd \alpha(k_2))\\
&=&\rho((\alpha^{-1}(g)h\otimes\nu(n))\alpha(k))\\
&=&\rho((g(h\otimes n))\alpha(k)).
\end{eqnarray*}
\end{proof}

\begin{proposition}\label{fundamental-thm-left-cov-bimod}If $(M,\mu) \in\widetilde{\mathcal{H}}(\mathcal{M}_k) $ is a left-covariant $(H,\alpha)$-Hom-bimodule, the $k$-linear map
\begin{equation}\label{left-cov-iso}\theta:H\otimes \:^{coH}M \longrightarrow M, \: h\otimes m\mapsto hm, \end{equation}
in $\widetilde{\mathcal{H}}(\mathcal{M}_k)$ is an isomorphism of left-covariant $(H,\alpha)$-Hom-bimodules, where the right $(H,\alpha)$-Hom-module structure on
$(^{coH}M,\mu|_{^{coH}M})$ is defined by \begin{equation}\label{right_action_on_coinvariants}m\lhd h := P_L(mh)=\widetilde{ad}_R(h)m,\end{equation} for $h\in H$ and $m$ $\in$ $^{coH}M$.
\end{proposition}

\begin{proof} Define $\vartheta: M\to H\otimes\: ^{coH}M$ as follows: For any $m\in M$
 $$\vartheta(m)=m_{(-1)}\otimes S(m_{(0)(-1)})m_{(0)(0)},$$
which is shown that $\vartheta$ is the inverse of $\theta$: Indeed,
\begin{eqnarray*}\theta(\vartheta(m))&=&\theta(m_{(-1)}\otimes S(m_{(0)(-1)})m_{(0)(0)})\\
&=&m_{(-1)}(S(m_{(0)(-1)})m_{(0)(0)})=\alpha(m_{(-1)1})(S(m_{(-1)2})\mu^{-1}(m_{(0)}))\\
&=&(m_{(-1)1}S(m_{(-1)2}))m_{(0)}=\mu(\varepsilon(m_{-1})m_{0})=\mu(\mu^{-1}(m))=m,
\end{eqnarray*}
where in the second equation we have used the Hom-coassociativity condition for $(M,\mu)$ to be a left Hom-comodule. On the other hand, for $m\in \:^{coH}M$ and $h\in H$ we obtain
\begin{eqnarray*}\vartheta(\theta(h\otimes m))&=&\vartheta(hm)\\
&=&h_11_H\otimes S((h_2\mu^{-1}(m))_{(-1)})(h_2\mu^{-1}(m))_{(0)}\\
&=&\alpha(h_1)\otimes S(h_{21}\mu^{-1}(m)_{(-1)})(h_{22}\mu^{-1}(m)_{(0)})\\
&=&\alpha(h_1)\otimes S(h_{21}1_H)(h_{22}\mu^{-2}(m))\\
&=&\alpha(h_1)\otimes \alpha(S(h_{21}))(h_{22}\mu^{-2}(m))\\
&=&\alpha(h_1)\otimes (S(h_{21})h_{22})\mu^{-1}(m)\\
&=&\alpha(h_1)\otimes \varepsilon(h_2)1_H\mu^{-1}(m)\\
&=&\alpha(\alpha^{-1}(h))\otimes m=h\otimes m,
\end{eqnarray*}
where in the fourth equality the fact that the Hom-coaction of $(H,\alpha)$ on $(M,\mu)$ is a morphism in $\widetilde{\mathcal{H}}(\mathcal{M}_k) $ has been used.
Now we show that $\theta$ is both $(H,\alpha)$-bilinear and left $(H,\alpha)$-colinear:
$$\theta(g(h\otimes m))=\theta(\alpha^{-1}(g)h\otimes\mu(m))=(\alpha^{-1}(g)h)\mu(m))=g(hm)=g\theta(h\otimes m),$$
\begin{eqnarray*}\theta((h\otimes m)k)&=&\theta(hk_1\otimes m\lhd k_2)=(hk_1)(\widetilde{ad}_{R}(k_2)m)\\
&=&(hk_1)((S(k_{21})\mu^{-1}(m))\alpha(k_{22}))\\
&=&(hk_1)(\alpha(S(k_{21}))(\mu^{-1}(m)k_{22}))\\
&=&((\alpha^{-1}(h)\alpha^{-1}(k_1))\alpha(S(k_{21})))(m\alpha(k_{22}))\\
&=&(h(\alpha^{-1}(k_1)S(k_{21})))(m\alpha(k_{22}))\\
&=&(h(k_{11}S(k_{12})))(mk_2)\\
&=&\alpha(h)(m\alpha^{-1}(k))=\theta(h\otimes m)k,
\end{eqnarray*}
where the penultimate line follows from the first relation of (\ref{Hom_coassociativity_Weak_counitality}). Lastly, put $^{M}\rho: M\to H\otimes M$ and $^{H\otimes \:^{coH}M}\rho:H\otimes \:^{coH}M \to H\otimes (H\otimes \:^{coH}M)$ for the left Hom-coaction of $(H,\alpha)$ on $(M,\mu)$ and $(H\otimes \:^{coH}M,\alpha\otimes\mu|_{^{coH}M})$, resp., thus

\begin{eqnarray*}^{M}\rho(\theta(h\otimes m))&=&^{M}\rho(hm)\\
&=&h_11_H\otimes h_2\mu^{-1}(m)\\
&=&\alpha(h_1)\otimes h_2\mu^{-1}(m)\\
&=&(id\otimes\theta)(\alpha(h_1)\otimes (h_2\otimes\mu^{-1}(m)))\\
&=&(id\otimes\theta)(^{H\otimes \:^{coH}M}\rho(h\otimes m)).
\end{eqnarray*}
\end{proof}

By Propositions (\ref{left-cov-structure}) and (\ref{fundamental-thm-left-cov-bimod}), we have the following

\begin{theorem}\label{one-to-one-LeftCov-RightMod} There is a bijection, given by (\ref{left-Hom-action})-(\ref{left-Hom-coaction}) and (\ref{right_action_on_coinvariants}), between left-covariant $(H,\alpha)$-Hom-bimodules $(M,\mu)$ and the right $(H,\alpha)$-Hom-module structures on $(^{coH}M,\mu|_{^{coH}M})$.
\end{theorem}

If the antipode $S$ of the monoidal Hom-Hopf algebra $(H,\alpha)$ is invertible, we have, for $m \in \: ^{coH}M $ and $h\in H$
\begin{equation}\label{left-cov-bimod-given-by-right-action}hm=(\mu^{-1}(m)\lhd S^{-1}(h_2))\alpha(h_1).\end{equation}

Indeed;
\begin{eqnarray*}(m\lhd S^{-1}(h_2))\alpha(h_1)&=&(1_H\mu^{-1}(m\lhd S^{-1}(h_2)))\alpha(h_1)\\
&=&(1_H(\mu^{-1}(m)\lhd\alpha^{-1}( S^{-1}(h_2))))\alpha(h_1)\\
&=&(1_H\alpha(h_{11}))((\mu^{-1}(m)\lhd\alpha^{-1}( S^{-1}(h_2)))\lhd\alpha(h_{12})\\
&=&\alpha^{2}(h_{11})(\mu(\mu^{-1}(m))\lhd(\alpha^{-1}( S^{-1}(h_2))\alpha^{-1}(\alpha(h_{12}))))\\
&=&\alpha(h_1)(m\lhd(\alpha^{-1}( S^{-1}(\alpha(h_{22})))h_{21})\\
&=&\alpha(h_1)(m\lhd( S^{-1}(h_{22})h_{21})\\
&=&\alpha(h_1)(m\lhd\varepsilon(h_2)1_H)=h\mu(m),
\end{eqnarray*}
which implies that $M=\: ^{coH}M\cdot H$.

We indicate by $^{H}_{H}\widetilde{\mathcal{H}}(\mathcal{M}_k)_H$ the category of left-covariant $(H,\alpha)$-Hom-bimodules;
the objects are the left-covariant Hom-bimodules and the morphisms are the ones in $\widetilde{\mathcal{H}}(\mathcal{M}_k)$ that are $(H,\alpha)$-linear on both sides and left $(H,\alpha)$-colinear.

We next show that the category $^{H}_{H}\widetilde{\mathcal{H}}(\mathcal{M}_k)_H$ of left-covariant $(H,\alpha)$-Hom-bimodules forms a monoidal category.
\begin{definition}
Let $(M,\mu)$ be a right $(A,\alpha)$-Hom-module and $(N,\nu)$ be a left $(A,\alpha)$-Hom-module. The tensor product $(M\otimes_A N, \mu\otimes\nu)$ of $(M,\mu)$ and $(N,\nu)$ over $(A,\alpha)$ is the coequalizer of
 $\rho\otimes id_N,\: (id_M\otimes \bar{\rho})\circ \tilde{a}_{M,A,N}: (M\otimes A)\otimes N\to M\otimes N$, where $\rho:M\otimes A\to M,\:m\otimes a\mapsto ma$ and $\bar{\rho}:A\otimes N\to N,\:a\otimes n\mapsto an$, for $a\in A$, $m\in M$ and $n\in N$, are the right and left Hom-actions of $(A,\alpha)$ on $(M,\mu)$ and $(N,\nu)$ respectively. That is,
\begin{equation}\label{tensor-prod-over-Hom-alg} m\otimes_A n=\{m\otimes n \in M\otimes N| \: ma\otimes n=\mu(m)\otimes a\nu^{-1}(n),\forall a \in A\}.\end{equation}
\end{definition}

 \begin{proposition}\label{tensor_prod_left-cov_Hom_bimod} Let $(H,\alpha)$ be a monoidal Hom-Hopf algebra and $(M,\mu)$, $(N,\nu)$ be
  two left-covariant $(H,\alpha)$-Hom-bimodules. Define the $k$-linear maps
 \begin{equation}\label{left_action_left-cov_bimod}H\otimes(M\otimes_H N)\to M\otimes_H N,\: h\otimes(m\otimes_H n)=\alpha^{-1}(h)m\otimes_H \nu(n), \end{equation}
 \begin{equation}\label{right_action_left-cov_bimod}(M\otimes_H N)\otimes H\to M\otimes_H N,\: (m\otimes_H n)\otimes h=\mu(m)\otimes_H n\alpha^{-1}(h), \end{equation}
 \begin{equation}\label{left-coaction-left-cov-bimod}\rho: M\otimes_H N\to H\otimes(M\otimes_H N),\: m\otimes_H n=m_{(-1)}n_{(-1)}\otimes(m_{(0)}\otimes_H n_{(0)}), \end{equation}
 Then $(M\otimes_H N, \mu\otimes_H \nu)$ becomes a left-covariant Hom-bimodule over $(H,\alpha)$ with these structures.
 \end{proposition}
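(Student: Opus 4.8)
The plan is to treat well-definedness over the coequalizer first, and only afterwards to check the bimodule, comodule and covariance axioms, each of which will descend from the corresponding axioms of $(M,\mu)$ and $(N,\nu)$. Throughout I would use the defining balancing relation $mh\otimes_H n=\mu(m)\otimes_H h\nu^{-1}(n)$. For the left action (\ref{left_action_left-cov_bimod}) I would substitute the two representatives $mh\otimes_H n$ and $\mu(m)\otimes_H h\nu^{-1}(n)$ and compare: the left--right compatibility of $M$ yields $\alpha^{-1}(g)(mh)=(\alpha^{-2}(g)m)\alpha(h)$, after which a single use of the balancing relation turns $\alpha^{-1}(g)(mh)\otimes_H\nu(n)$ into $\alpha^{-1}(g)\mu(m)\otimes_H\alpha(h)n$, which is exactly the value on the other representative. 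The right action (\ref{right_action_left-cov_bimod}) is symmetric, now invoking the left--right compatibility of $N$ in the form $(k\nu^{-1}(n))\alpha^{-1}(h)=\alpha(k)(\nu^{-1}(n)\alpha^{-2}(h))$.

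The coaction (\ref{left-coaction-left-cov-bimod}) is the delicate case. Here I would first record two consequences of the single covariance axiom $\rho((hm)\alpha(g))=\Delta(\alpha(h))(\rho(m)\Delta(g))$, obtained by setting $g=1_H$ and $h=1_H$ respectively, namely the Hopf-type identities
$$\rho(hm)=h_1m_{(-1)}\otimes h_2m_{(0)},\qquad \rho(mg)=m_{(-1)}g_1\otimes m_{(0)}g_2,$$
which hold in every left-covariant Hom-bimodule, in particular in both $M$ and $N$. Evaluating $\rho$ on the two representatives and using these identities together with $\rho(\mu(m))=\alpha(m_{(-1)})\otimes\mu(m_{(0)})$ and $\rho(\nu^{-1}(n))=\alpha^{-1}(n_{(-1)})\otimes\nu^{-1}(n_{(0)})$, the first tensor leg matches by Hom-associativity, since $(m_{(-1)}h_1)n_{(-1)}=\alpha(m_{(-1)})(h_1\alpha^{-1}(n_{(-1)}))$, while the second leg $m_{(0)}h_2\otimes_H n_{(0)}=\mu(m_{(0)})\otimes_H h_2\nu^{-1}(n_{(0)})$ matches by one further application of the balancing relation. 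A quick check that each of the three maps commutes with $\mu\otimes_H\nu$ then shows that they are morphisms in $\widetilde{\mathcal{H}}(\mathcal{M}_k)$.

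With well-definedness secured, the left- and right-module axioms (Hom-associativity and weak unitality), the left--right compatibility $(h(m\otimes_H n))\alpha(g)=\alpha(h)((m\otimes_H n)g)$, and the coassociativity and counit of $\rho$ are verified by direct Sweedler computations running parallel to those of Proposition \ref{left-cov-structure}; indeed the formulas (\ref{left_action_left-cov_bimod})--(\ref{left-coaction-left-cov-bimod}) are the evident analogues of (\ref{left-Hom-action})--(\ref{left-Hom-coaction}) with the free factor replaced by the balanced one, the comodule axioms using in addition the multiplicativity of $\Delta$ and $\varepsilon$ on $H$. The closing covariance identity $\rho((h(m\otimes_H n))\alpha(g))=\Delta(\alpha(h))(\rho(m\otimes_H n)\Delta(g))$ is the longest computation, but it reproduces the final display in the proof of Proposition \ref{left-cov-structure} almost verbatim once the two Hopf-type identities are applied to each factor.

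I expect the well-definedness of the coaction to be the main obstacle: it is the only step where the \emph{single} covariance axiom of each factor must be unpacked into the two Hopf-type identities and then combined with both Hom-associativity and the balancing relation on the two tensor legs simultaneously. I would therefore isolate those identities as a preliminary observation and reuse them in the covariance verification, so that every remaining axiom becomes a mechanical consequence of the component structures of $M$ and $N$.
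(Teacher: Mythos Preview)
Your plan is correct and follows essentially the same route as the paper: both verify the Hom-bimodule axioms, Hom-comodule axioms, and the covariance identity by direct Sweedler computations, using precisely the Hopf-type identities $\rho(hm)=h_1m_{(-1)}\otimes h_2m_{(0)}$ and $\rho(mg)=m_{(-1)}g_1\otimes m_{(0)}g_2$ that you isolate as a preliminary observation (the paper invokes them implicitly in its covariance calculation). The one genuine addition in your proposal is the well-definedness check over the coequalizer $\otimes_H$, which the paper simply passes over in silence; your argument for it is correct and is, strictly speaking, required.
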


\begin{proof} We first prove that the map (\ref{left_action_left-cov_bimod}) gives $M\otimes_H N$ a left $(H,\alpha)$-Hom-module structure:
\begin{eqnarray*}\alpha(g)(h(m\otimes_H n))&=&\alpha(g)(\alpha^{-1}(h)m\otimes_H \nu(n))=g(\alpha^{-1}(h)m)\otimes_H \nu^{2}(n)\\
&=&\alpha^{-1}(gh)\mu(m)\otimes_H \nu(\nu(n))=(gh)(\mu\otimes_H\nu)(m\otimes_H n),
\end{eqnarray*}
\begin{equation*}1_H(m\otimes_H n)=\alpha^{-1}(1_H)m\otimes_H\nu(n)=\mu(m)\otimes_H\nu(n).\end{equation*}
One can  prove in the same way that the map (\ref{right_action_left-cov_bimod}) makes $M\otimes_H N$ a right Hom-module. We show that the compatibility condition
is satisfied:
\begin{eqnarray*}(g(m\otimes_H n))\alpha(h)&=&(\alpha^{-1}(g)m\otimes_H \nu(n))\alpha(h)=\mu(\alpha^{-1}(g)m)\otimes_H \nu(n)h\\
&=&g\mu(m)\otimes_H \nu(n)h=\alpha^{-1}(\alpha(g))\mu(m)\otimes_H \nu(n\alpha^{-1}(h))\\
&=&\alpha(g)(\mu(m)\otimes_H n\alpha^{-1}(h))=\alpha(g)((m\otimes_H n)h).
\end{eqnarray*}
We now demonstrate that $M\otimes_H N$ possesses a left $(H,\alpha)$-Hom-comodule structure with $\rho$ which is given by
$\rho(m\otimes_H n)=m_{(-1)}n_{(-1)}\otimes(m_{(0)}\otimes_H n_{(0)})$.

\begin{eqnarray*}\lefteqn{\Delta((m\otimes_H n)_{(-1)})\otimes(\mu^{-1}\otimes_H\nu^{-1})((m\otimes_H n)_{(0)})}\hspace{3em}\\
&=&\Delta(m_{(-1)})\Delta(n_{(-1)})\otimes(\mu^{-1}(m_{(0)})\otimes_H\nu^{-1}(n_{(0)}))\\
&=&(\alpha^{-1}(m_{(-1)})\alpha^{-1}(n_{(-1)})\otimes m_{(0)(-1)}n_{(0)(-1)})\otimes(m_{(0)(0)}\otimes_H n_{(0)(0)})\\
&=&(\alpha^{-1}((m\otimes_H n)_{(-1)})\otimes(m\otimes_H n)_{(0)(-1)})\otimes(m\otimes_H n)_{(0)(0)},
\end{eqnarray*}

\begin{eqnarray*}\varepsilon((m\otimes_H n)_{(-1)})(m\otimes_H n)_{(0)}&=&\varepsilon(m_{(-1)}n_{(-1)})m_{(0)}\otimes_H n_{(0)}\\
&=&\varepsilon(m_{(-1)})m_{(0)}\otimes_H \varepsilon(n_{(-1)})n_{(0)}\\
&=&\mu^{-1}(m)\otimes_H\nu^{-1}(n),
\end{eqnarray*}
which prove the Hom-coassociativity and Hom-counity of $\rho$, respectively.
Lastly, we verify the left covariance as follows:

\begin{eqnarray*}\Delta(\alpha(g))(\rho(m\otimes_Hn)\Delta(h))&=&(\alpha(g_1)\otimes\alpha(g_2))((m_{(-1)}n_{(-1)}\otimes(m_{(0)}\otimes_H n_{(0)}))(h_1\otimes h_2))\\
&=&\alpha(g_1)((m_{(-1)}n_{(-1)})h_1)\otimes \alpha(g_2)((m_{(0)}\otimes_H n_{(0)})h_2)\\
&=&\alpha(g_1)(\alpha(m_{(-1)})(n_{(-1)}\alpha^{-1}(h_1)))\otimes\alpha(g_2)(\mu(m_{(0)})\otimes_H n_{(0)}\alpha^{-1}(h_2))\\
&=&(g_1\alpha(m_{(-1)}))(\alpha(n_{(-1)})h_1)\otimes(g_2\mu(m_{(0)})\otimes_H\nu(n_{(0)}\alpha^{-1}(h_2)))\\
&=&(g_1\mu(m)_{(-1)})(\nu(n)_{(-1)}h_1)\otimes (g_2\mu(m)_{(0)}\otimes_H\nu(n)_{(0)}h_2)\\
&=&(g\mu(m))_{(-1)}(\nu(n)h)_{(-1)}\otimes ((g\mu(m))_{(0)}\otimes_H(\nu(n)h)_{(0)})\\
&=&\rho(g\mu(m)\otimes_H\nu(n)h)\\
&=&\rho((\alpha^{-1}(g)m\otimes_H\nu(n))\alpha(h))\\
&=&\rho((g(m\otimes_Hn))\alpha(h)).
\end{eqnarray*}

\end{proof}

\begin{proposition}\label{associator_left-cov_Hom_bimod}Let $(H,\alpha)$ be a monoidal Hom-Hopf algebra and $(M,\mu)$, $(N,\nu)$,  $(P,\pi)$ be left-covariant
$(H,\alpha)$-Hom-bimodules. Then the linear map
\begin{equation}\label{associator-formula}\tilde{a}_{M,N,P}:(M\otimes_H N)\otimes_H P\to M\otimes_H(N\otimes_H P),\: \tilde{a}_{M,N,P}((m\otimes_H n)\otimes_H p)=\mu(m)\otimes_H(n\otimes_H \pi^{-1}(p)),\end{equation}
is an isomorphism of $(H,\alpha)$-Hom-bimodules and left $(H,\alpha)$-Hom-comodules.
\end{proposition}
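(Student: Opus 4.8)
The plan is to establish that $\tilde{a}_{M,N,P}$ is an associativity constraint by verifying, in order, that it is well defined on the nested coequalizers, that it is bijective with an explicit inverse, that it is a morphism of left-covariant $(H,\alpha)$-Hom-bimodules, and finally that it is natural and obeys the Pentagon Axiom. Since $(M\otimes_H N)\otimes_H P$ is a quotient of $(M\otimes N)\otimes P$, the first task is to show that the assignment $(m\otimes_H n)\otimes_H p\mapsto\mu(m)\otimes_H(n\otimes_H \pi^{-1}(p))$ is constant on equivalence classes; once this is settled the remaining properties reduce to elementwise manipulations of the kind already performed in Propositions \ref{left-cov-structure} and \ref{tensor_prod_left-cov_Hom_bimod}.

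For well-definedness I would check compatibility with the two families of relations presenting the domain: the inner relation $mh\otimes_H n=\mu(m)\otimes_H h\nu^{-1}(n)$ inherited from $M\otimes_H N$, and the outer relation $(m\otimes_H n)h\otimes_H p=((\mu\otimes_H\nu)(m\otimes_H n))\otimes_H h\pi^{-1}(p)$ of the relative tensor product, where the right action on $m\otimes_H n$ is given by (\ref{right_action_left-cov_bimod}). For the outer relation, the left representative $(\mu(m)\otimes_H n\alpha^{-1}(h))\otimes_H p$ is sent to $\mu^{2}(m)\otimes_H(n\alpha^{-1}(h)\otimes_H \pi^{-1}(p))$, which the defining relation of $N\otimes_H P$ turns into $\mu^{2}(m)\otimes_H(\nu(n)\otimes_H \alpha^{-1}(h)\pi^{-2}(p))$; the right representative maps to the same element once one uses that $\pi$ is left $H$-linear, i.e. $\pi^{-1}(h\pi^{-1}(p))=\alpha^{-1}(h)\pi^{-2}(p)$. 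The inner relation is handled the same way, the inputs now being $\mu(mh)=\mu(m)\alpha(h)$ together with the defining relation of $M\otimes_H(N\otimes_H P)$. Tracking the three automorphisms $\mu,\nu,\pi$ against Hom-associativity throughout this double quotient is where the care is needed, and I expect it to be the main obstacle.

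Next I would exhibit the inverse $\tilde{a}^{-1}_{M,N,P}:M\otimes_H(N\otimes_H P)\to(M\otimes_H N)\otimes_H P$, $m\otimes_H(n\otimes_H p)\mapsto(\mu^{-1}(m)\otimes_H n)\otimes_H \pi(p)$, whose well-definedness follows by the same method, after which both composites collapse to the identity via $\mu(\mu^{-1}(m))=m$ and $\pi^{-1}(\pi(p))=p$. To see that $\tilde{a}_{M,N,P}$ lives in $^{H}_{H}\widetilde{\mathcal{H}}(\mathcal{M}_k)_H$, I would check left and right $H$-linearity and left $H$-colinearity against the structure maps (\ref{left_action_left-cov_bimod})--(\ref{left-coaction-left-cov-bimod}). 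For instance, left $H$-linearity holds because $g\cdot((m\otimes_H n)\otimes_H p)=(\alpha^{-2}(g)m\otimes_H \nu(n))\otimes_H \pi(p)$ is sent by $\tilde{a}_{M,N,P}$ to $\alpha^{-1}(g)\mu(m)\otimes_H(\nu(n)\otimes_H p)$, which equals $g\cdot(\mu(m)\otimes_H(n\otimes_H \pi^{-1}(p)))$ after invoking $\mu(am)=\alpha(a)\mu(m)$; the right-linear and colinear verifications have the identical shape.

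Finally, naturality in each variable is immediate: for morphisms $f,g,h$ of left-covariant Hom-bimodules the relevant square commutes on generators because $f,g,h$ commute with $\mu,\nu,\pi$ (being morphisms in $\widetilde{\mathcal{H}}(\mathcal{M}_k)$). The Pentagon Axiom would then be verified by evaluating both composite paths on a generator $((m\otimes_H n)\otimes_H p)\otimes_H q$ and matching the resulting powers of the four automorphisms, exactly mirroring the Pentagon identity for the constraint $\tilde{a}$ of $\widetilde{\mathcal{H}}(\mathcal{C})$ recalled in Section 2. This last step is routine bookkeeping rather than a genuine difficulty.
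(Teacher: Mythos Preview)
Your proposal is correct and follows essentially the same approach as the paper: both verify elementwise that $\tilde{a}_{M,N,P}$ is left and right $H$-linear and left $H$-colinear against the structure maps (\ref{left_action_left-cov_bimod})--(\ref{left-coaction-left-cov-bimod}). The only difference is that the paper dispatches naturality and the Pentagon Axiom by citing Prop.~1.1 of \cite{CaenepeelGoyvaerts} (the associator formula being the same as in $\widetilde{\mathcal{H}}(\mathcal{M}_k)$, so these properties descend to $\otimes_H$ once well-definedness is granted), whereas you carry out the well-definedness on the coequalizer and the Pentagon check by hand.
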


\begin{proof} It is clear that $\tilde{a}_{M,N,P}$ is bijective and fulfills the relation $\tilde{a}_{M,N,P}\circ(\mu\otimes\nu\otimes\pi)=(\mu\otimes\nu\otimes\pi)\circ\tilde{a}_{M,N,P}$. In what follows we prove the left and right $(H,\alpha)$-linearity, and left $(H,\alpha)$-colinearity of $\tilde{a}_{M,N,P}$: The calculation

\begin{eqnarray*}\tilde{a}_{M,N,P}(h((m\otimes_Hn)\otimes_Hp))&=&\tilde{a}_{M,N,P}(\alpha^{-1}(h)(m\otimes_Hn)\otimes_H\pi(p))\\
&=&\tilde{a}_{M,N,P}((\alpha^{-2}(h)m\otimes_H\nu(n))\otimes_H\pi(p))\\
&=&\mu(\alpha^{-2}(h)m)\otimes_H(\nu(n)\otimes_Hp)\\
&=&\alpha^{-1}(h)\mu(m)\otimes_H((\nu\otimes_H\pi)(n\otimes_H\pi^{-1}(p)))\\
&=&h(\mu(m)\otimes_H(n\otimes_H\pi^{-1}(p)))\\
&=&h\tilde{a}_{M,N,P}((m\otimes_Hn)\otimes_Hp)
\end{eqnarray*}
shows that $\tilde{a}_{M,N,P}$ is left $(H,\alpha)$-linear. By performing a similar computation, one can also affirm that
$\tilde{a}_{M,N,P}(((m\otimes_Hn)\otimes_Hp)h)=\tilde{a}_{M,N,P}((m\otimes_Hn)\otimes_Hp)h$, i.e., $\tilde{a}_{M,N,P}$ is right $(H,\alpha)$-linear too.

Now we verify the left $(H,\alpha)$-colinearity of $\tilde{a}_{M,N,P}$:
\begin{eqnarray*}\lefteqn{^{Q}\rho(\tilde{a}_{M,N,P}((m\otimes_Hn)\otimes_Hp))}\hspace{6em}\\
&=&^{Q}\rho(\mu(m)\otimes_H(n\otimes_H \pi^{-1}(p)))\\
&=&\mu(m)_{(-1)}(n\otimes_H \pi^{-1}(p))_{(-1)}\otimes(\mu(m)_{(0)}\otimes_H(n\otimes_H \pi^{-1}(p))_{(0)})\\
&=&\mu(m)_{(-1)}(n_{(-1)}\otimes_H \pi^{-1}(p)_{(-1)})\otimes(\mu(m)_{(0)}\otimes_H(n_{(0)}\otimes_H \pi^{-1}(p)_{(0)})\\
&=&\alpha(m_{(-1)})(n_{(-1)}\alpha^{-1}(p_{(-1)}))\otimes(\mu(m_{(0)})\otimes_H(n_{(0)}\otimes_H \pi^{-1}(p_{(0)}))\\
&=&(m_{(-1)}n_{(-1)})p_{(-1)}\otimes\tilde{a}_{M,N,P}((m_{(0)}\otimes_Hn_{(0)})\otimes_Hp_{(0)})\\
&=&(m\otimes_Hn)_{(-1)}p_{(-1)}\otimes \tilde{a}_{M,N,P}((m_{(0)}\otimes_Hn_{(0)})\otimes_Hp_{(0)})\\
&=&(id \otimes\tilde{a}_{M,N,P})((m\otimes_Hn)_{(-1)}p_{(-1)}\otimes((m\otimes_Hn)_{(0)}\otimes_Hp_{(0)}))\\
&=&(id \otimes\tilde{a}_{M,N,P})( ^{Q'}\rho((m\otimes_Hn)\otimes_Hp)),
\end{eqnarray*}
where $^{Q}\rho$ and $^{Q'}\rho$ are the left codiagonal Hom-coactions of $(H,\alpha)$ on the objects $Q=M\otimes_H(N\otimes_HP)$ and $Q'=(M\otimes_HN)\otimes_HP$ resp.
\end{proof}

\begin{proposition}\label{units_left-cov_Hom_bimod}Let $(H,\alpha)$ be a monoidal Hom-Hopf algebra and $(M,\mu)$ be a left-covariant $(H,\alpha)$-Hom-bimodule. Then the following linear maps
\begin{equation}\label{left-unit-formula}\tilde{l}_M:H\otimes_H M\to M,\: h\otimes_H m\mapsto hm, \end{equation}
\begin{equation}\label{right-unit-formula}\tilde{r}_M:M\otimes_H H\to M,\: m\otimes_H h\mapsto mh. \end{equation}
are isomorphisms of $(H,\alpha)$-Hom-bimodules and left $(H,\alpha)$-Hom-comodules.
\end{proposition}

\begin{proof} With the left and right $(H,\alpha)$-Hom-module structures given by Hom-multiplication $H\otimes H \to H,\: h\otimes g\mapsto m_H(h\otimes g)=hg$
and the left $(H,\alpha)$-Hom-comodule structure given by Hom-multiplication $H\to H\otimes H,\: h\mapsto h_1\otimes h_2 $, $(H,\alpha)$ is a left-covariant $(H,\alpha)$-Hom-bimodule. We show only that $\tilde{l}_M$ is $(H,\alpha)$-linear on both sides and left $(H,\alpha)$-colinear.
For $\tilde{r}_M$ the argument is analogous. Obviously, $\tilde{l}_M$ is a $k$-isomorphism with the inverse
$\tilde{l}_M^{-1}:M \to H\otimes_H M, \: m\mapsto 1\otimes \mu^{-1}(m)$ and the relation $\mu\circ\tilde{l}_M=\tilde{l}_M\circ(id_H\otimes\mu)$ is satisfied. We show now left and right $(H,\alpha)$-linearity, and $(H,\alpha)$-colinearity of $\tilde{l}_M$, respectively: For any $h,g\in H$ and $m\in M$,

\begin{eqnarray*}\tilde{l}_{M}(h(g\otimes_Hm))&=&\tilde{l}_{M}(\alpha^{-1}(h)g\otimes_H\mu(m))=(\alpha^{-1}(h)g)\mu(m)\\
&=&h(gm)=h\tilde{l}_{M}(g\otimes_H m),
\end{eqnarray*}

\begin{eqnarray*}\tilde{l}_{M}((g\otimes_H m)h)&=&\tilde{l}_{M}(\alpha(g)\otimes_H m\alpha^{-1}(h))\\
&=&\alpha(g)(m\alpha^{-1}(h))=(gm)h=\tilde{l}_{M}(g\otimes_Hm)h,
\end{eqnarray*}

\begin{eqnarray*}(id_H\otimes \tilde{l}_{M})(^{H\otimes_HM}\rho(h\otimes_Hm))&=&(id_H\otimes \tilde{l}_{M})(h_{(-1)}m_{(-1)}\otimes(h_{(0)}\otimes_Hm_{(0)}))\\
&=&(id_H\otimes \tilde{l}_{M})(h_1m_{(-1)}\otimes(h_2\otimes_Hm_{(0)}))\\
&=&h_1m_{(-1)}\otimes h_2m_{(0)}=h\:^{M}\rho(m)\\
&=&^{M}\rho(hm)=\:^{M}\rho(\tilde{l}_{M}(h\otimes_H m)),
\end{eqnarray*}
where $^{H\otimes_HM}\rho$ and $^{M}\rho$ are the left Hom-coactions of $(H,\alpha)$ on the objects $H\otimes_HM$ and $M$, respectively.
\end{proof}

\begin{theorem} Let $(H,\alpha)$ be a monoidal Hom-Hopf algebra. Then the category $^{H}_{H}\widetilde{\mathcal{H}}(\mathcal{M}_k)_H$ of left-covariant $(H,\alpha)$-Hom-bimodules forms a monoidal category, with tensor product $\otimes_H$, associativity constraints $\tilde{a}$, and left and right unity constraints $\tilde{l}$ and $\tilde{r}$, defined in Propositions \ref{tensor_prod_left-cov_Hom_bimod}, \ref{associator_left-cov_Hom_bimod} and \ref{units_left-cov_Hom_bimod}, respectively.
\end{theorem}

\begin{proof}
The naturality of $\tilde{a}$ and the fact that $\tilde{a}$ satisfies the Pentagon Axiom follow from Proposition $1.1$ in \cite{CaenepeelGoyvaerts}. Let $f:M\to M'$ be a morphism in $^{H}_{H}\widetilde{\mathcal{H}}(\mathcal{M}_k)_H$ and let $(M,\mu)$ be a left-covariant $(H,\alpha)$-Hom-bimodule. Then, for $m\in M$ and $h\in H$, we have
$$(f\circ\tilde{l}_M)(h\otimes_H m)=f(hm)=hf(m)=\tilde{l}_{M'}(h\otimes_H f(m)),$$
showing that $\tilde{l}$ is natural. The naturality of $\tilde{r}$ can be proven similarly. We finally verify that the Triangle Axiom is satisfied: For $h\in H$, $m\in M$ and $n\in N$,

\begin{eqnarray*}((id_M\otimes_H\tilde{l}_N)\circ\tilde{a}_{M,H,N})((m\otimes_H h)\otimes_H n)
&=&(id_M\otimes_H\tilde{l}_N)(\mu(m)\otimes_H(h\otimes_H\nu^{-1}(n)))\\
&=&\mu(m)\otimes_H h\nu^{-1}(n)=mh\otimes_H n\\
&=&\tilde{r}_M(m\otimes_H h)\otimes_H n=(\tilde{r}_M\otimes id_N)((m\otimes_H h)\otimes_H n).
\end{eqnarray*}
\end{proof}
In the rest of the section, we study the structure theory of left-covariant Hom-bimodules in coordinate setting.

Let $(H,\alpha)$ be a monoidal Hom-coalgebra with Hom-comultiplication $\Delta:H\to H\otimes H,\: h\mapsto h_1\otimes h_2$ and Hom-counit $\varepsilon:H\to k$. Then the dual $(H'=Hom(H,k),\bar{\alpha})$ is a monoidal Hom-algebra with the convolution product $(ff')(h)=f(h_1)f'(h_2)$ for functionals $f,f'\in H'$ and $h\in H$, as Hom-multiplication, and $\varepsilon$ as Hom-unit, where $\bar{\alpha}(f)=f\circ \alpha^{-1}$ for any $f\in H'$: For $f,g,k\in H'$ and $h\in H$,
\begin{eqnarray*}(\bar{\alpha}(f)(gk))(h)&=&\bar{\alpha}(f)(h_1)(gk)(h_2)=f(\alpha^{-1}(h_1))g(h_{21})k(h_{22})\\
&=&f(h_{11}))g(h_{12})k(\alpha^{-1}(h_{2}))=(fg)(h_1)\bar{\alpha}(k)(h_2)\\
&=&((fg)\bar{\alpha}(k))(h),
\end{eqnarray*}
which is the Hom-associativity, and
$$(\varepsilon f)(h)=\varepsilon(h_1) f(h_2)=f(\alpha^{-1}(h))=\bar{\alpha}(f)(h)=(f \varepsilon)(h),$$
which is the Hom-unity. Then we have the following

\begin{lemma}\label{Hom-actions-of-the-dual}
\begin{enumerate}
\item The linear map $H'\otimes H\to H,\: f\otimes h\mapsto f\bullet h:=\alpha^{2}(h_1)f(\alpha(h_2))$ defines a left Hom-action of $(H',\bar{\alpha})$ on $(H,\alpha)$.
\item  The linear map $H\otimes H'\to H,\: h\otimes f\mapsto h\bullet f:=f(\alpha(h_1))\alpha^{2}(h_2)$ defines a right Hom-action of $(H',\bar{\alpha})$ on $(H,\alpha)$.
\end{enumerate}
\end{lemma}
\begin{proof} We prove only the item (1). Let $f,f'\in H' $ and $h\in H$. Then,
\begin{eqnarray*}\bar{\alpha}(f)\bullet(f'\bullet h)&=&(f\circ\alpha^{-1})\bullet (\alpha^{2}(h_1)f'(\alpha(h_2)))\\
&=&\alpha^{2}(\alpha^{2}(h_1)_1)(f\circ\alpha^{-1})(\alpha(\alpha^{2}(h_1)_2))f'(\alpha(h_2))\\
&=&\alpha^{4}(h_{11})f(\alpha^{2}(h_{12}))f'(\alpha(h_2))=\alpha^{4}(\alpha^{-1}(h_1))f(\alpha^{2}(h_{21}))f'(\alpha(\alpha(h_{22})))\\
&=&\alpha^{3}(h_1)f(\alpha^{2}(h_{21}))f'(\alpha^{2}(h_{22}))=\alpha^{3}(h_1)f(\alpha^{2}(h_{2})_1)f'(\alpha^{2}(h_{2})_2)\\
&=&\alpha^{3}(h_1)(ff')(\alpha^{2}(h_{2}))=\alpha^{2}(\alpha(h)_1)(ff')(\alpha(\alpha(h)_2))\\
&=&(ff')\bullet \alpha(h),
\end{eqnarray*}
$$\varepsilon\bullet h=\alpha^{2}(h_1)\varepsilon(\alpha(h_2))=\alpha^{2}(h_1)\varepsilon(h_2)=\alpha^{2}(\alpha^{-1}(h))=\alpha(h),$$
which are the Hom-associativity and Hom-unity properties, respectively. We also have
$\bar{\alpha}(f)\bullet \alpha(h)=(f\circ \alpha^{-1})\bullet \alpha(h)=\alpha^{3}(h_1)f(\alpha(h_2))=\alpha(f\bullet h),$
which finishes the proof that $(H,\alpha)$ is a left $(H',\bar{\alpha})$-Hom-module with the given map.
\end{proof}

For the discussion below we assume $k$ as a field. Suppose that $I$ is an index set. The matrix $(v^{i}_j)_{i,j\in I}$ with entries $v^{i}_j\in H$ is said to be {\it pointwise finite} if for any $i \in I$, only a finite number of terms $v^{i}_j$ do not vanish. The matrix $(f^{i}_j)_{i,j\in I}$ of functionals $f^{i}_j\in H'$ is called pointwise finite if for arbitrary $i\in I$ and $h\in H$, all but finitely many terms $f^{i}_j(h)$ vanish. Let $(M,\mu)$ be a left-covariant $(H,\alpha)$-Hom-bimodule and $\{m_{i}\}_{i\in I}$ be a linear basis of $\:^{coH}M$. Then there exist uniquely determined coefficients $\mu^{i}_j,\:\bar{\mu}^{i}_j \in k$, which are the entries of pointwise finite matrices $(\mu^{i}_j)_{i,j\in I}$ and $(\bar{\mu}^{i}_j)_{i,j\in I}$, such that $\mu|_{^{coH}M}(m_i)=\mu^{i}_jm_j$, $(\mu|_{^{coH}M})^{-1}(m_i)=\bar{\mu}^{i}_jm_j$ (Einstein summation convention is used, i.e., there is a summation over repeating indices) satisfying $\mu^{i}_j\bar{\mu}^{j}_k=\delta_{ik}=\bar{\mu}^{i}_j\mu^j_k$. Thus, by using the above lemma, we express some of the results obtained about left-covariant Hom-bimodules in coordinate form as follows

\begin{theorem}\label{fund-thm-of-left-cov-in-coord-form}Let $(M,\mu)$ be a left-covariant $(H,\alpha)$-Hom-bimodule and $\{m_{i}\}_{i\in I}$ be a linear basis of $\:^{coH}M$. Then $\{m_{i}\}_{i\in I}$ is a free left $(H,\alpha)$-Hom-module basis of $M$ and there exists a pointwise finite matrix $(f^{i}_j)_{i,j\in I}$ of linear functionals $f^{i}_j\in H'$ satisfying, for any $h,g\in H$ and $i,j\in I$,
\begin{equation}\label{Hom-assoc-by-functionals}\mu^{i}_jf^j_k(hg)=f^{i}_j(h)f^j_k(\alpha(g)), \: f^{i}_j(1)=\mu^{i}_j,\end{equation}
\begin{equation}\label{left-cov-bimod-given-by-left-action-on-basis}m_{i}h=(\bar{\mu}^{i}_jf^j_k\bullet\alpha^{-1}(h))m_k.\end{equation}
Moreover, $\{m_{i}\}_{i\in I}$ is a free right $(H,\alpha)$-Hom-module basis of $M$ and we have
\begin{equation}\label{left-cov-bimod-given-by-right-action-on-basis}hm_{i}=m_j((\bar{\mu}^{i}_kf^k_j\circ S^{-1})\bullet\alpha^{-1}(h)).\end{equation}
\end{theorem}

\begin{proof} By the equation (\ref{structure-eqn-left-cov-Hom-bimod2}) and the fact that $P_L(m)\in \: ^{coH}M$ for any $m\in M$, we write any element $m\in M$ in the form $m=\sum_ih_im_i$, where $h_i\in H,\: i\in I$. Then, applying the left Hom-coaction to the both sides of $m=\sum_ih_im_i$, we get $\rho(m)=\sum_i\Delta(h_i)(1\otimes \mu^{-1}(m_i))$, and hence by the equations (\ref{structure-eqn-left-cov-Hom-bimod1}) and $P_L(m_i)=m_i,\: i\in I$, we have
\begin{eqnarray*}(id\otimes P_L)(\rho(m))&=&\sum_ih_{i,1}1\otimes P_L(h_{i,2}\mu^{-1}(m_i))=\sum_i\alpha(h_{i,1})\otimes \varepsilon(h_{i,2}) \mu(P_L(\mu^{-1}(m_i)))\\
&=&\sum_i\alpha(h_{i,1}\varepsilon(h_{i,2}))\otimes P_L(m_i)=\sum_ih_i\otimes m_i,
\end{eqnarray*}
where we put $\Delta(h_i)=h_{i,1}\otimes h_{i,2}$. By the linear independence of $\{m_{i}\}_{i\in I}$, we conclude that $h_i\in H$ are uniquely determined.

Since, for any $h\in H$, $m_i\lhd h=\widetilde{ad}_R(h)(m_i)\in\: ^{coH}M$, there exist $f^{i}_j(h) \in k,\:i,j\in I$ such that
\begin{equation}m_i\lhd h=f^{i}_j(h)m_j,\end{equation}
where only a finite number of $f^{i}_j(h)$ do not vanish. For any $h,g\in H$, we have
\begin{eqnarray*}\mu^{i}_jf^{j}_k(hg)m_k&=&\mu(m_i)\lhd(hg)=(m_i\lhd h)\lhd \alpha(g)\\
&=&(f^{i}_j(h)m_j)\lhd \alpha(g)=f^{i}_j(h)f^{j}_k(\alpha(g))m_k,
\end{eqnarray*}
which implies $\mu^{i}_jf^j_k(hg)=f^{i}_j(h)f^j_k(\alpha(g))$, and
$$f^{i}_j(1)m_j=m_{i}\lhd 1=\mu(m_i)=\mu^{i}_jm_j$$
concludes that $f^{i}_j(1)=\mu^{i}_j$. By using the identification of $hm_i$ with $h\otimes m_i$ and the right Hom-action of $H$ on $H\otimes\:^{coH}M$ we obtain

\begin{eqnarray*}m_ih&=&(1\otimes \mu^{-1}(m_i))h=1h_1\otimes \mu^{-1}(m_i)\lhd h_2\\
&=&\alpha(h_1)\otimes (\bar{\mu}^{i}_jm_j)\lhd h_2=\alpha(h_1)\otimes \bar{\mu}^{i}_jf^j_k(h_2)m_k\\
&=&\alpha(h_1)(\bar{\mu}^{i}_jf^j_k)(h_2)m_k=\alpha^{2}(\alpha^{-1}(h)_1)(\bar{\mu}^{i}_jf^j_k)(\alpha(\alpha^{-1}(h)_2))m_k\\
&=&(\bar{\mu}^{i}_jf^j_k\bullet \alpha^{-1}(h))m_k.
\end{eqnarray*}
The equation (\ref{left-cov-bimod-given-by-right-action}) yields

\begin{eqnarray*}hm_i&=&(\mu^{-1}(m_i)\lhd S^{-1}(h_2))\alpha(h_1)=((\bar{\mu}^{i}_jm_j)\lhd S^{-1}(h_2))\alpha(h_1)\\
&=&(\bar{\mu}^{i}_jf^j_k)(S^{-1}(h_2))m_k\alpha(h_1)=m_k\alpha(h_1)(\bar{\mu}^{i}_jf^j_k)(S^{-1}(h_2))\\
&=&m_k\alpha(h_1)(\bar{\mu}^{i}_jf^j_k\circ S^{-1})(h_2)=m_k((\bar{\mu}^{i}_jf^j_k\circ S^{-1})\bullet \alpha^{-1}(h)).
\end{eqnarray*}
Since, for any $p,s\in I$, $f^p_s\circ\alpha=\bar{\mu}^p_qf^q_r\mu^r_s$ and $(\bar{\mu}^l_sf^p_l)(hg)=(\bar{\mu}^p_rf^r_l)(h)(\bar{\mu}^l_qf^q_s)(g)$ for $h,g\in H$, we have

\begin{eqnarray*}((\bar{\mu}^j_lf^l_i)(\bar{\mu}^k_pf^p_j\circ S^{-1}))(S(h))&=&(\bar{\mu}^j_lf^l_i)(S(h)_1)(\bar{\mu}^k_pf^p_j\circ S^{-1})(S(h)_2)\\
&=&(\bar{\mu}^j_lf^l_i)(S(h_2))(\bar{\mu}^k_pf^p_j)(h_1)=(\bar{\mu}^k_p\bar{\mu}^j_lf^p_j)(h_1)f^l_i(S(h_2))\\
&=&(\bar{\mu}^k_p\bar{\mu}^p_rf^r_l\circ\alpha^{-1})(h_1)(\mu^q_i\bar{\mu}^l_sf^s_q\circ\alpha^{-1})(S(h_2))\\
&=&\bar{\mu}^k_p\mu^q_i(\bar{\mu}^p_rf^r_l)(\alpha^{-1}(h)_1)(\bar{\mu}^l_sf^s_q)(S(\alpha^{-1}(h)_2))\\
&=&\bar{\mu}^k_p\mu^q_i(\bar{\mu}^l_qf^p_l)(\alpha^{-1}(h)_1S(\alpha^{-1}(h)_2))\\
&=&\bar{\mu}^k_p\mu^q_i\bar{\mu}^l_qf^p_l(1)\varepsilon(\alpha^{-1}(h))=\bar{\mu}^k_p\mu^q_i\bar{\mu}^l_q\mu^p_l(1)\varepsilon(h)\\
&=&\delta_{lk}\delta_{li}\varepsilon(S(h))=\delta_{ki}\varepsilon(S(h)),
\end{eqnarray*}
that is, we have shown that
\begin{equation}(\bar{\mu}^j_lf^l_i)(\bar{\mu}^k_pf^p_j\circ S^{-1})=\delta_{ik}\varepsilon.\end{equation}
In a similar way, one can also prove that
\begin{equation}\label{convolution-for-coordinate-forms}(\bar{\mu}^j_lf^l_k\circ S^{-1})(\bar{\mu}^{i}_pf^p_j)=\delta_{ki}\varepsilon.\end{equation}
Since $\{m_{i}\}_{i\in I}$ is a free left $(H,\alpha)$-Hom-module basis of $M$ and the equation (\ref{left-cov-bimod-given-by-right-action-on-basis}) holds, any element $m\in M$ is also of the form $m=\sum_im_ih_i$ for some $h_i\in H$. Let us assume that $\sum_im_ih_i=0$ (all but finitely many $h_i$ vanishes, $i\in I$). So, by the equation (\ref{left-cov-bimod-given-by-left-action-on-basis}), we get $\sum_i(\bar{\mu}^{i}_jf^j_k\bullet\alpha^{-1}(h_i))m_k=0$ which implies that

$$\sum_i(\bar{\mu}^{i}_jf^j_k\bullet\alpha^{-1}(h_i))=0, \: \forall k\in I.$$

If we apply $\bar{\alpha}(\bar{\mu}^k_lf^l_p\circ S^{-1})$ from left to the both sides and use the equation (\ref{convolution-for-coordinate-forms}), we obtain
\begin{eqnarray*}0&=&\sum_i\bar{\alpha}(\bar{\mu}^k_lf^l_p\circ S^{-1})\bullet(\bar{\mu}^{i}_jf^j_k\bullet\alpha^{-1}(h_i))\\
&=&\sum_i((\bar{\mu}^k_lf^l_p\circ S^{-1})(\bar{\mu}^{i}_jf^j_k))\bullet\alpha(\alpha^{-1}(h_i))\\
&=&\sum_i\delta_{pi}\varepsilon\bullet h_i=\sum_i\delta_{pi}\alpha(h_i)=\alpha(b_p),
\end{eqnarray*}
for all $p\in I$, that is $b_p=0, \forall p\in I$. This finishes the proof that $\{m_{i}\}_{i\in I}$ is a free right $(H,\alpha)$-Hom-module basis of $M$.
\end{proof}

\section{Right-Covariant Hom-Bimodules}

\begin{definition} A {\it right-covariant} $(H,\alpha)$-{\it Hom-bimodule} is an $(H,\alpha)$-Hom-bimodule $(M,\mu) \in \widetilde{\mathcal{H}}(\mathcal{M}_k)$ which is a right $(H,\alpha)$-Hom-comodule, with Hom-coaction $\sigma: M\to M\otimes H$, $m\mapsto m_{[0]}\otimes m_{[1]}$, in $\widetilde{\mathcal{H}}(\mathcal{M}_k)$ such that
\begin{equation}\sigma((hm)\alpha(g))=\Delta(\alpha(h))(\sigma(m)\Delta(g)).\end{equation}
The set $M^{coH}=\{m\in M|\rho(m)=\mu^{-1}(m)\otimes 1_H\}$ of $M$ is called {\it right coinvariant} of $(H,\alpha)$ on $(M,\mu)$.
\end{definition}
  Without performing details, we can develop a similar theory for the right-covariant $(H,\alpha)$-Hom-bimodules as in the previous section by making the necessary changes. We define the projection by
\begin{equation}P_R:M\to \: M^{coH},\: m\mapsto m_{[0]}S(m_{[1]}), \end{equation}
which is unique with the property
\begin{equation}P_R(mh)=\varepsilon(h)\mu(P_R(m)), \: for\:all\: h\in H, m\in M.\end{equation}
Since the relation
\begin{equation}(id\otimes\Delta)\circ(\sigma\otimes id)\circ\sigma=\tilde{a}_{M\otimes H,H,H}\circ (\tilde{a}^{-1}_{M,H,H}\otimes id)\circ((id\otimes\Delta)\otimes id)\circ(\sigma\otimes id)\circ \sigma\end{equation}
holds, that is, for any $m\in M$, the following equality
\begin{equation}m_{[0][0]}\otimes m_{[0][1]}\otimes m_{[1]1}\otimes m_{[1]2}=m_{[0][0]}\otimes\alpha(m_{[0][1]1})\otimes \alpha(m_{[0][1]2})\otimes\alpha^{-1}(m_{[1]})\end{equation}
is fulfilled, one can prove that $$\sigma(P_R(m))=\mu^{-1}(P_R(m))\otimes 1_H.$$
One can also show that
\begin{equation}m=P_R(m_{[0]})m_{[1]}\end{equation}
is acquired by using the Hom-coassociativity property for the right Hom-comodules, which specifies that $M=M^{coH}\:\cdot H$.
$P_R$ also satisfies
\begin{equation} P_R(hm)=\alpha(h_1)(\mu^{-1}(P_R(m))S(h_2))\equiv\widetilde{ad}_L(h)P_R(m).\end{equation}
Since $(M,\mu)$ is an $(H,\alpha)$-Hom-bimodule, $M^{coH}$ has a left $(H,\alpha)$-Hom-module structure by the formula

\begin{equation}\label{Hom-mod-structure-on-right-coinvariants}h\rhd m:=P_R(hm)=\widetilde{ad}_L(h)m.  \end{equation}

$\widetilde{ad}_L$ is in fact a left Hom-action of $(H,\alpha)$ on $M^{coH}$:
$$\widetilde{ad}_L(1_H)m=1_H\rhd m=\alpha(1_H)(\mu^{-1}(m)S(1_H))=1_Hm=\mu(m),$$
\begin{eqnarray*}(gh)\rhd \mu(m)&=&\alpha(g_1h_1)(\mu^{-1}(\mu(m))S(g_2h_2))\\
&=&(\alpha(g_1)\alpha(h_1))(m (S(h_2)S(g_2)))\\
&=&(\alpha(g_1)\alpha(h_1))((\mu^{-1}(m)S(h_2))S(\alpha(g_2)))\\
&=&((g_1h_1)(\mu^{-1}(m)S(h_2)))\alpha(S(\alpha(g_2)))\\
&=&(\alpha(g_1)(h_1\mu^{-1}(\mu^{-1}(m)S(h_2))))\alpha(S(\alpha(g_2)))\\
&=&(\alpha(g_1)\mu^{-1}((\alpha(h_1)\mu^{-1}(\mu^{-1}(m)S(h_2)))))\alpha(S(\alpha(g_2)))\\
&=&\alpha(\alpha(g_1))(\mu^{-1}(h\rhd m)S(\alpha(g_2)))\\
&=&\alpha(g)\rhd(h\rhd m),
\end{eqnarray*}
for all $m\in M^{coH}$ and $g,h \in H$.
Once this left Hom-module structure has been given to $M^{coH}$, it can be proven, in a similar way as in the proofs of Propositions (\ref{left-cov-structure}) and (\ref{fundamental-thm-left-cov-bimod}), that the right-covariant $(H,\alpha)$-Hom-bimodule  $(M,\mu)$ is isomorphic, by the morphism in $\widetilde{\mathcal{H}}(\mathcal{M}_k)$
\begin{equation}\theta': M^{coH}\otimes H \to M, \: m\otimes h\mapsto mh ,\end{equation}
to the right-covariant $(H,\alpha)$-Hom-bimodule $M^{coH}\otimes H$ with Hom-(co)module structures defined by the following maps in $\widetilde{\mathcal{H}}(\mathcal{M}_k)$
\begin{equation}\label{right-Hom-action-on-right-bicov}(M^{coH}\otimes H)\otimes H \to M^{coH}\otimes H, \: (m\otimes h)\otimes g \mapsto \mu(m)\otimes h\alpha^{-1}(g) ,\end{equation}
\begin{equation}\label{left-Hom-action-on-right-bicov}H \otimes(M^{coH}\otimes H) \to M^{coH}\otimes H, \: g\otimes(m\otimes h)\mapsto g_1\rhd m\otimes g_2h ,\end{equation}
\begin{equation}\label{right-Hom-coaction-on-right-bicov} M^{coH}\otimes H \to (M^{coH}\otimes H)\otimes H, \: m\otimes h\mapsto (\mu^{-1}(m)\otimes h_1)\otimes \alpha(h_2) .\end{equation}
Thus we have the following

\begin{theorem}\label{one-to-one-RightCov-LeftMod}There is a one-to-one correspondence, given by (\ref{Hom-mod-structure-on-right-coinvariants}) and (\ref{right-Hom-action-on-right-bicov})-(\ref{right-Hom-coaction-on-right-bicov}), between the right-covariant $(H,\alpha)$-Hom-bimodules $(M,\mu)$ and the left $(H,\alpha)$-Hom-module structures on $(M^{coH},\mu|_{M^{coH}})$.
\end{theorem}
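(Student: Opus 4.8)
The plan is to establish the correspondence through two mutually inverse constructions, exactly mirroring the left-covariant situation recorded in Remark~\ref{one-to-one-LeftCov-RightMod}, but with the roles of ``left'' and ``right'' and of ``action'' and ``coaction'' interchanged. In one direction, to a right-covariant $(H,\alpha)$-Hom-bimodule $(M,\mu)$ I assign the left $(H,\alpha)$-Hom-module $(M^{coH},\mu|_{M^{coH}})$ with the action $h\rhd m=P_R(hm)=\widetilde{ad}_L(h)m$ of~(\ref{Hom-mod-structure-on-right-coinvariants}); that this is a genuine left Hom-module action was already checked just above the statement. In the other direction, to a left $(H,\alpha)$-Hom-module $(N,\nu)$ I assign the object $(N\otimes H,\nu\otimes\alpha)$ endowed with the right action, left action and right coaction of~(\ref{right-Hom-action-on-right-bicov})--(\ref{right-Hom-coaction-on-right-bicov}).

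First I would verify that the second assignment really lands in the category of right-covariant Hom-bimodules. This is the right-handed analogue of Proposition~\ref{left-cov-structure}: I would check that~(\ref{right-Hom-action-on-right-bicov}) and~(\ref{left-Hom-action-on-right-bicov}) make $N\otimes H$ an $(H,\alpha)$-Hom-bimodule, that~(\ref{right-Hom-coaction-on-right-bicov}) is a coassociative, counital right $(H,\alpha)$-Hom-coaction, and finally that the right-covariance identity $\sigma((g(n\otimes h))\alpha(k))=\Delta(\alpha(g))(\sigma(n\otimes h)\Delta(k))$ holds. Each verification is a routine Sweedler computation of the same shape as in Proposition~\ref{left-cov-structure}, the only subtlety being the careful tracking of the powers of $\alpha$ and $\nu$ produced by the associativity and unit constraints of $\widetilde{\mathcal{H}}(\mathcal{M}_k)$.

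Next I would show that the two assignments are mutually inverse. For the composite sending $(M,\mu)$ to $M^{coH}$ and then to $M^{coH}\otimes H$, I would invoke the $k$-linear map $\theta'\colon M^{coH}\otimes H\to M$, $m\otimes h\mapsto mh$, and argue as in Theorem~\ref{fundamental-thm-left-cov-bimod} that it is an isomorphism of right-covariant Hom-bimodules with inverse $\vartheta'(m)=P_R(m_{[0]})\otimes m_{[1]}$; here one uses $M=M^{coH}\cdot H$ together with the identity $m=P_R(m_{[0]})m_{[1]}$ and the uniqueness of $P_R$. For the reverse composite I would compute the right Hom-coinvariants of the constructed bimodule $N\otimes H$: from~(\ref{right-Hom-coaction-on-right-bicov}) one sees that $n\otimes 1_H$ is coinvariant and that $(N\otimes H)^{coH}=N\otimes 1_H\cong N$; I would then check that the induced adjoint action $h\rhd(n\otimes 1_H)=P_R(h(n\otimes 1_H))$ reproduces the original action $h\cdot n$ on $N$, so that $N$ is recovered as a left Hom-module.

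The main obstacle I expect is the verification that $\theta'$ is a well-defined isomorphism of \emph{right-covariant} bimodules, i.e.\ that $\vartheta'$ is two-sided $H$-linear and right $H$-colinear and is genuinely inverse to $\theta'$. This rests on the right-handed coassociativity relation $m_{[0][0]}\otimes m_{[0][1]}\otimes m_{[1]1}\otimes m_{[1]2}=m_{[0][0]}\otimes\alpha(m_{[0][1]1})\otimes\alpha(m_{[0][1]2})\otimes\alpha^{-1}(m_{[1]})$ recorded earlier, and the bookkeeping of $\alpha$- and $\mu$-powers runs in the ``mirror'' direction to Theorem~\ref{fundamental-thm-left-cov-bimod}, which is where delicate miscounts tend to occur. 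Once naturality of $\theta'$ and of the coinvariants isomorphism in the morphisms of $\widetilde{\mathcal{H}}(\mathcal{M}_k)$ is noted, the two assignments furnish the asserted bijection.
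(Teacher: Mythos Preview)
Your proposal is correct and follows precisely the approach the paper itself indicates: the paper does not give a separate proof of this theorem but simply asserts that the right-covariant case is obtained from Proposition~\ref{left-cov-structure} and Theorem~\ref{fundamental-thm-left-cov-bimod} ``by making the necessary changes,'' and your outline carries out exactly those mirror-image verifications (including the correct inverse $\vartheta'(m)=P_R(m_{[0]})\otimes m_{[1]}$ and the identification $(N\otimes H)^{coH}\cong N$). The only slight slip is in your phrasing of the ``main obstacle'': it is $\theta'$ (not $\vartheta'$) whose $H$-bilinearity and right $H$-colinearity must be checked directly, with $\vartheta'$ merely serving as the set-theoretic inverse.
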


We denote by $_{H}\widetilde{\mathcal{H}}(\mathcal{M}_k)_H^H$ the category of right-covariant $(H,\alpha)$-Hom-bimodules whose objects are the right-covariant
$(H,\alpha)$-Hom-bimodules with those morphisms that are left and right $(H,\alpha)$-linear and right $(H,\alpha)$-colinear.

\begin{proposition}\label{tensor_prod_right-cov_Hom_bimod} Let $(H,\alpha)$ be a monoidal Hom-Hopf algebra and $(M,\mu)$, $(N,\nu)$ be two
right-covariant $(H,\alpha)$-Hom-bimodules. Along with (\ref{left_action_left-cov_bimod}) and (\ref{right_action_left-cov_bimod}), define the morphism in $\widetilde{\mathcal{H}}(\mathcal{M}_k)$
 \begin{equation}\label{right-coaction-right-cov-bimod}\sigma: M\otimes_H N\to (M\otimes_H N)\otimes H,\: m\otimes_H n\mapsto(m_{[0]}\otimes_H n_{[0]})\otimes m_{[1]}n_{[1]}, \end{equation}
 which is the right codiagonal Hom-coaction of $(H,\alpha)$ on $M\otimes_H N$. Then $(M\otimes_H N, \mu\otimes_H \nu)$ is a right-covariant $(H,\alpha)$-Hom-bimodule .
 \end{proposition}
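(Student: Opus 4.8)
The plan is to exploit the fact that the underlying $(H,\alpha)$-Hom-bimodule structure on $M\otimes_H N$ is given by exactly the same maps (\ref{left_action_left-cov_bimod}) and (\ref{right_action_left-cov_bimod}) that were already shown to define a Hom-bimodule in the proof of Proposition \ref{tensor_prod_left-cov_Hom_bimod}; hence the left-module, right-module and their compatibility axioms require no new verification. The genuinely new content is that the map $\sigma$ of (\ref{right-coaction-right-cov-bimod}) defines a right $(H,\alpha)$-Hom-comodule coaction and that it satisfies the right-covariance compatibility. Since the right coaction $\sigma(m\otimes_H n)=(m_{[0]}\otimes_H n_{[0]})\otimes m_{[1]}n_{[1]}$ is the precise mirror of the left coaction $\rho$ used in Proposition \ref{tensor_prod_left-cov_Hom_bimod}, every verification below runs parallel to the corresponding one there, with the roles of the two tensor slots and of $\mu,\nu,\alpha$ interchanged.

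First I would check that $\sigma$ descends from $M\otimes N$ to the coequalizer $M\otimes_H N$, i.e. that $\sigma(mh\otimes_H n)=\sigma(\mu(m)\otimes_H h\nu^{-1}(n))$ for every $h\in H$. I would expand both sides with (\ref{right-coaction-right-cov-bimod}), rewrite $(mh)_{[0]}\otimes(mh)_{[1]}$ and $(h\nu^{-1}(n))_{[0]}\otimes(h\nu^{-1}(n))_{[1]}$ by reading off from the right-covariance of $M$ and of $N$ how $\sigma$ interacts with the one-sided actions, and then reconcile the two expressions using the balancing relation defining $\otimes_H$ together with the Hom-algebra axiom (\ref{Hom-associativity_Weak_unitality}).

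Next I would establish the two Hom-comodule axioms. The Hom-counit is immediate: applying $\varepsilon$ to the second leg and using $m_{[0]}\varepsilon(m_{[1]})=\mu^{-1}(m)$ and $n_{[0]}\varepsilon(n_{[1]})=\nu^{-1}(n)$ yields $(\mu\otimes_H\nu)^{-1}(m\otimes_H n)$, since the scalars $\varepsilon(m_{[1]}),\varepsilon(n_{[1]})$ factor out of the balanced tensor. For Hom-coassociativity I would compute $(\mu\otimes_H\nu)^{-1}((m\otimes_H n)_{[0]})\otimes \Delta((m\otimes_H n)_{[1]})$ and compare it with $(m\otimes_H n)_{[0][0]}\otimes((m\otimes_H n)_{[0][1]}\otimes\alpha^{-1}((m\otimes_H n)_{[1]}))$, the agreement being forced by the Hom-coassociativity of $\sigma_M$ and $\sigma_N$ together with the multiplicativity of $\Delta$; I would also record that $\sigma$ is a morphism in $\widetilde{\mathcal{H}}(\mathcal{M}_k)$, i.e. $\sigma\circ(\mu\otimes_H\nu)=((\mu\otimes_H\nu)\otimes\alpha)\circ\sigma$, which follows from the corresponding statements for $M$ and $N$.

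The main step, and the one I expect to be the chief obstacle, is the right-covariance identity $\sigma((h(m\otimes_H n))\alpha(g))=\Delta(\alpha(h))(\sigma(m\otimes_H n)\Delta(g))$. I would attack it with the same bookkeeping used for the left-covariance identity in Proposition \ref{tensor_prod_left-cov_Hom_bimod}: on the left, first rewrite $h(m\otimes_H n)$ and the subsequent right multiplication by $\alpha(g)$ through (\ref{left_action_left-cov_bimod})--(\ref{right_action_left-cov_bimod}), then apply $\sigma$ and expand each of $(h\mu(m))_{[0]},(h\mu(m))_{[1]}$ and $(\nu(n)g)_{[0]},(\nu(n)g)_{[1]}$ via the right-covariance of $M$ and $N$; on the right, expand $\sigma(m\otimes_H n)\Delta(g)$ and then the left $(H\otimes H)$-action of $\Delta(\alpha(h))$. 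The difficulty is entirely in the iterated reassociation, as both sides are products of four $H$-factors entangled with a balanced tensor twisted by $\mu$ and $\nu$; matching them requires repeated use of Hom-associativity (\ref{Hom-associativity_Weak_unitality}), the multiplicativity and Hom-coassociativity of $\Delta$, and the balancing relation of $\otimes_H$ to move $H$-factors across the $\otimes_H$. Once the four-fold products are brought to a common normal form the two sides coincide, completing the proof.
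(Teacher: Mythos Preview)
Your proposal is correct and follows essentially the same approach as the paper: both reduce to verifying the Hom-comodule axioms for $\sigma$ and then the right-covariance identity, with the bimodule structure inherited from Proposition~\ref{tensor_prod_left-cov_Hom_bimod}. You additionally flag the well-definedness of $\sigma$ on the coequalizer $M\otimes_H N$, a point the paper passes over in silence; this is a worthwhile observation, though its verification is routine once one unpacks the right-covariance of $M$ and $N$ on one-sided actions.
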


\begin{proof} It is sufficient to prove first that $M\otimes_HN$ becomes a right $(H,\alpha)$-Hom-comodule with $\sigma$ and then to assert that the right covariance is held.

\begin{eqnarray*}\lefteqn{(\mu^{-1}\otimes_H\nu^{-1})((m\otimes_H n)_{[0]})\otimes \Delta((m\otimes_H n)_{[1]})}\hspace{3em}\\
&=&(\mu^{-1}(m_{[0]})\otimes_H\nu^{-1}(n_{[0]}))\otimes \Delta(m_{[1]})\Delta(n_{[1]})\\
&=&(m_{[0][0]}\otimes_H n_{[0][0]})\otimes (m_{[0][1]}n_{[0][1]}\otimes\alpha^{-1}(m_{[1]})\alpha^{-1}(n_{[1]}))\\
&=&(m\otimes_H n)_{[0][0]}\otimes ((m\otimes_H n)_{[0][1]}\otimes (m\otimes_H n)_{[1]}),
\end{eqnarray*}
where in the second equality the Hom-coassociativity condition for right $(H,\alpha)$-Hom-comodules has been used, and we also have
$$(m\otimes_H n)_{[0]}\varepsilon((m\otimes_H n)_{[1]})=m_{[0]}\varepsilon(m_{[1]})\otimes_H n_{[0]}\varepsilon(n_{[1]})=\mu^{-1}(m)\otimes_H\nu^{-1}(n),$$

that is, $\sigma$ satisfies the Hom-coassociativity and Hom-counity, respectively.

And with the next calculation we end the proof:

\begin{eqnarray*}\sigma((g(m\otimes_Hn))\alpha(h))&=&\sigma(g\mu(m)\otimes_H\nu(n)h)\\
&=&((g\mu(m))_{[0]}\otimes_H((\nu(n)h)_{[0]})\otimes(g\mu(m))_{[1]}(\nu(n)h)_{[1]} \\
&=&(g_1\mu(m_{[0]})\otimes_H\nu(n_{[0]})h_1)\otimes(g_2\alpha(m_{[1]}))(\alpha(n_{[1]})h_2)\\
&=&\alpha(g_1)(\mu(m_{[0]})\otimes_H n_{[0]}\alpha^{-1}(h_1))\otimes \alpha(g_2)((m_{[1]}n_{[1]})h_2)\\
&=&\Delta(\alpha(g))((m_{[0]}\otimes_H n_{[0]})h_1\otimes(m_{[1]}n_{[1]})h_2)\\
&=&\Delta(\alpha(g))(\sigma(m\otimes_Hn)\Delta(h)).
\end{eqnarray*}

\end{proof}

\begin{theorem}Let $(H,\alpha)$ be a monoidal Hom-Hopf algebra. Then $_{H}\widetilde{\mathcal{H}}(\mathcal{M}_k)_H^H$ is a tensor category, with tensor product
$\otimes_H$ is defined in Proposition (\ref{tensor_prod_right-cov_Hom_bimod}), and associativity constraint $\tilde{a}$, left unit constraint $\tilde{l}$ and right unit constraint $\tilde{r}$ are given by (\ref{associator-formula}), (\ref{left-unit-formula}) and (\ref{right-unit-formula}), respectively.
\end{theorem}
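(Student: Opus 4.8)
The plan is to assemble the tensor category structure from work already done, treating it in close parallel to the left-covariant case of Section~4. By Proposition (\ref{tensor_prod_right-cov_Hom_bimod}) the operation $\otimes_H$ sends a pair of right-covariant $(H,\alpha)$-Hom-bimodules to a right-covariant $(H,\alpha)$-Hom-bimodule, and the object $(k,id_k)$ --- equipped with the trivial left and right $H$-actions $h\otimes x\mapsto\varepsilon(h)x$, $x\otimes h\mapsto\varepsilon(h)x$ and now the trivial \emph{right} $H$-coaction $x\mapsto x\otimes 1_H$ --- is the unit object, exactly as in the proof of Proposition (\ref{units_left-cov_Hom_bimod}). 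It therefore remains to check that the constraints $\tilde{a}$, $\tilde{l}$, $\tilde{r}$, given on underlying objects by (\ref{associator-formula}), (\ref{left-unit-formula}), (\ref{right-unit-formula}), are morphisms in $_{H}\widetilde{\mathcal{H}}(\mathcal{M}_k)_H^H$ and that the coherence axioms hold.

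The decisive observation is that the left and right $H$-module structures on the tensor products (\ref{left_action_left-cov_bimod})--(\ref{right_action_left-cov_bimod}), together with the associator and the unit isomorphisms (\ref{associator-formula})--(\ref{right-unit-formula}), are defined by \emph{precisely the same formulas} as in Section~4 and do not involve the comodule structure at all. Consequently the naturality of the three constraints, their left- and right-$H$-linearity, their well-definedness on the relevant coequalizers, the Pentagon Axiom for $\tilde{a}$ and the Triangle Axiom relating $\tilde{a}$, $\tilde{l}$, $\tilde{r}$ are \emph{verbatim} the computations already carried out in Propositions (\ref{associator_left-cov_Hom_bimod}) and (\ref{units_left-cov_Hom_bimod}) (the Pentagon and Triangle being the formal facts recorded in Prop.~$1.1$ of \cite{CaenepeelGoyvaerts}), and I would simply invoke them rather than repeat them.

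The only genuinely new verification is right $H$-colinearity of the three constraints with respect to the right coaction $\sigma$ of (\ref{right-coaction-right-cov-bimod}). For the associator, writing $Q=M\otimes_H(N\otimes_H P)$ and $Q'=(M\otimes_H N)\otimes_H P$, I would evaluate both $\sigma^{Q}\circ\tilde{a}_{M,N,P}$ and $(\tilde{a}_{M,N,P}\otimes id_H)\circ\sigma^{Q'}$ on $(m\otimes_H n)\otimes_H p$. Using that $\sigma$ is a morphism in $\widetilde{\mathcal{H}}(\mathcal{M}_k)$, so that $\mu(m)_{[1]}=\alpha(m_{[1]})$ and $\pi^{-1}(p)_{[1]}=\alpha^{-1}(p_{[1]})$, both sides produce $\mu(m_{[0]})\otimes_H(n_{[0]}\otimes_H\pi^{-1}(p_{[0]}))$ in the $Q$-component, while the $H$-components come out as $\alpha(m_{[1]})(n_{[1]}\alpha^{-1}(p_{[1]}))$ and $(m_{[1]}n_{[1]})p_{[1]}$ respectively; these coincide by a single application of the Hom-associativity law (\ref{Hom-associativity_Weak_unitality}). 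For $\tilde{l}$ and $\tilde{r}$ the same type of computation collapses, via the weak unitality $1_H h=\alpha(h)=h1_H$, to an identity. The main --- and indeed the only --- obstacle is this colinearity check for $\tilde{a}$, and it reduces to one invocation of Hom-associativity; once it is established, combining it with the inherited results from Section~4 and with Proposition (\ref{tensor_prod_right-cov_Hom_bimod}) completes the proof.
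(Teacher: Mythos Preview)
Your proposal is correct and follows essentially the same approach as the paper: both reduce the proof to verifying right $H$-colinearity of $\tilde{a}_{M,N,P}$ and of the unit constraints, invoking the left-covariant results of Section~4 for everything else, and both settle the associator colinearity by the single Hom-associativity identity $\alpha(m_{[1]})(n_{[1]}\alpha^{-1}(p_{[1]}))=(m_{[1]}n_{[1]})p_{[1]}$.
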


\begin{proof} What is left to be proven is that the associator $\tilde{a}_{M,N,P}$, left unitor $\tilde{l}_M$ and right unitor $\tilde{r}_M$ are all right $(H,\alpha)$-colinear.

\begin{eqnarray*}(\sigma^{Q'}\circ\tilde{a}_{M,N,P})((m\otimes_Hn)\otimes_Hp)&=&\sigma^{Q'}(\mu(m)\otimes_H(n\otimes_H\pi^{-1}(p)))\\
&=&(\mu(m)_{[0]}\otimes_H(n\otimes_H\pi^{-1}(p))_{[0]})\otimes \mu(m)_{[1]}(n\otimes_H\pi^{-1}(p))_{[0]}\\
&=&(\mu(m_{[0]})\otimes_H(n_{[0]}\otimes_H\pi^{-1}(p_{[0]})))\otimes \alpha(m_{[1]})(n_{[1]}\alpha^{-1}(p_{[1]}))\\
&=&\tilde{a}_{M,N,P}((m_{[0]}\otimes_Hn_{[0]})\otimes_Hp_{[0]})\otimes (m_{[1]}n_{[1]})p_{[1]}\\
&=&(\tilde{a}_{M,N,P}\otimes id)(((m\otimes_Hn)_{[0]}\otimes_Hp_{[0]})\otimes (m\otimes_Hn)_{[1]}p_{[1]})\\
&=&((\tilde{a}_{M,N,P}\otimes id)\circ\sigma^{Q})((m\otimes_Hn)\otimes_Hp)
\end{eqnarray*}
which stands for the right $(H,\alpha)$-colinearity of $\tilde{a}_{M,N,P}$, where $\sigma^{Q'}$ and $\sigma^{Q}$ are the right Hom-coactions of $(H,\alpha)$ on $Q'=M\otimes_H(N\otimes_HP)$ and $Q=(M\otimes_HN)\otimes_HP$.

By considering the fact that $(H,\alpha)$ is a right-covariant $(H,\alpha)$-Hom-bimodule with Hom-actions given by its Hom-multiplication and Hom-coaction by its Hom-comultiplication, we do the computation

\begin{eqnarray*}(\tilde{l}_{M}\otimes id_H)(\sigma^{H\otimes_HM}(h\otimes_H m))&=&(\tilde{l}_{M}\otimes id_H)((h_{[0]}\otimes_Hm_{[0]})\otimes h_{[1]}m_{[1]})\\
&=&(\tilde{l}_{M}\otimes id_H)((h_1\otimes_Hm_{[0]})\otimes h_2m_{[1]})\\
&=&h\sigma^{M}(m)=\sigma^{M}(hm)\\
&=&\sigma^{M}(\tilde{l}_{M}(h\otimes_Hm)),
\end{eqnarray*}
concluding $\tilde{l}_M$ is right $(H,\alpha)$-colinear. By a similar argument, $\tilde{r}_M$ as well is right $(H,\alpha)$-colinear.
\end{proof}

\section{Bicovariant Hom-Bimodules}
\begin{definition} A {\it bicovariant} $(H,\alpha)$-{\it Hom-bimodule} is an $(H,\alpha)$-Hom-bimodule $(M,\mu)$ together with $k$-linear mappings
$$\rho: M \to H \otimes M,\: m\mapsto m_{(-1)}\otimes m_{(0)},$$
$$\sigma: M \to M \otimes H,\: m\mapsto m_{[0]}\otimes m_{[1]},$$
in $\widetilde{\mathcal{H}}(\mathcal{M}_k)$, such that
\begin{enumerate}
\item $(M,\mu)$ is a left-covariant $(H,\alpha)$-Hom-bimodule with left $(H,\alpha)$-Hom-coaction $\rho$,
\item $(M,\mu)$ is a right-covariant $(H,\alpha)$-Hom-bimodule with right $(H,\alpha)$-Hom-coaction $\sigma$,
\item the following relation holds:
\begin{equation}\label{Hom-comm} \tilde{a}_{H,M,H}\circ(\rho\otimes id)\circ\sigma=(id\otimes\sigma)\circ\rho.\end{equation}
\end{enumerate}
\end{definition}
The condition (\ref{Hom-comm}) is called the Hom-commutativity of the Hom-coactions $\rho$ and $\sigma$ on $M$ and can be expressed
by Sweedler's notation as follows
$$m_{(-1)}\otimes(m_{(0)[0]}\otimes m_{(0)[1]})=\alpha(m_{[0](-1)})\otimes(m_{[0](0)}\otimes \alpha^{-1}(m_{[1]})), \: m\in M. $$

\begin{proposition}\label{bicovariant-bimod-structure}Let $(N,\nu)\in\widetilde{\mathcal{H}}(\mathcal{M}_k)$ be a right $(H,\alpha)$-Hom-module by the map $N\otimes H\to N,\: n\otimes h\mapsto n\lhd h$ and a right $(H,\alpha)$-Hom-comodule by $N\to N\otimes H,\: n\mapsto n_{(0)}\otimes n_{(1)}$ such that the compatibility condition, which is called Hom-Yetter-Drinfeld condition,
\begin{equation}\label{Yetter-Drinfeld}n_{(0)}\lhd\alpha^{-1}(h_1)\otimes n_{(1)}\alpha^{-1}(h_2)=(n\lhd h_2)_{(0)}\otimes\alpha^{-1}(h_1(n\lhd h_2)_{(1)})\end{equation}
holds for $h\in H$ and $n\in N$.
 The morphisms (\ref{left-Hom-action})-(\ref{left-Hom-coaction}) and
\begin{equation}\label{right-Hom-coaction}\sigma:H\otimes N\to (H\otimes N)\otimes H,\: h\otimes n\mapsto (h_1\otimes n_{(0)})\otimes h_2n_{(1)}, \end{equation}
in $\widetilde{\mathcal{H}}(\mathcal{M}_k)$, define a bicovariant $(H,\alpha)$-Hom-bimodule structure on $(H\otimes N , \alpha\otimes\nu)$.
\end{proposition}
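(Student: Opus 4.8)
The plan is to verify, one at a time, the three defining properties of a bicovariant $(H,\alpha)$-Hom-bimodule for $(H\otimes N,\alpha\otimes\nu)$, reusing the earlier work wherever possible. Property (1), that $(H\otimes N,\alpha\otimes\nu)$ is a left-covariant $(H,\alpha)$-Hom-bimodule under the structures (\ref{left-Hom-action})--(\ref{left-Hom-coaction}), is exactly the content of Proposition \ref{left-cov-structure}, which makes no use of any comodule structure on $N$; so nothing new is needed and I would simply invoke it. Only property (2) (right covariance of $\sigma$) and property (\ref{Hom-comm}) (Hom-commutativity of $\rho$ and $\sigma$) remain, and these are precisely the points at which the $H$-comodule structure of $N$ and the compatibility (\ref{Yetter-Drinfeld}) intervene.

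For property (\ref{Hom-comm}) I would compute both sides on a generator $h\otimes n$. Expanding $\rho$ and $\sigma$ by (\ref{left-Hom-coaction}) and (\ref{right-Hom-coaction}) and using that the $N$-coaction $\rho_{N}$ is a morphism in $\widetilde{\mathcal{H}}(\mathcal{M}_k)$, i.e.\ $(\nu^{-1}(n))_{(0)}\otimes(\nu^{-1}(n))_{(1)}=\nu^{-1}(n_{(0)})\otimes\alpha^{-1}(n_{(1)})$, both sides acquire identical $N$-contributions $\nu^{-1}(n_{(0)})$ and $\alpha^{-1}(n_{(1)})$, and the whole identity collapses to the $H$-leg statement $\alpha(h_1)\otimes h_{21}\otimes h_{22}=\alpha^{2}(h_{11})\otimes h_{12}\otimes\alpha^{-1}(h_2)$. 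This is just the Hom-coassociativity of $\Delta$ from (\ref{Hom_coassociativity_Weak_counitality}) with $\alpha^{2}$ applied to the first leg, so property (\ref{Hom-comm}) holds with no appeal to (\ref{Yetter-Drinfeld}); it is purely structural.

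For property (2) I would first confirm that $\sigma$ gives $H\otimes N$ a right $(H,\alpha)$-Hom-comodule structure: Hom-coassociativity follows by combining the Hom-coassociativity of $\Delta$ on $H$ with that of $\rho_{N}$ on $N$; Hom-counity follows from $h_1\varepsilon(h_2)=\alpha^{-1}(h)$ and $n_{(0)}\varepsilon(n_{(1)})=\nu^{-1}(n)$; and $\sigma$ is a morphism because both $\Delta$ and $\rho_{N}$ are morphisms in $\widetilde{\mathcal{H}}(\mathcal{M}_k)$. It then remains to establish the right-covariance identity $\sigma((hm)\alpha(g))=\Delta(\alpha(h))(\sigma(m)\Delta(g))$ on $m=x\otimes n$. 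Here the right action (\ref{right-Hom-action}) produces the factor $\nu(n)\lhd\alpha(g_2)$ in the $N$-slot, so that applying $\sigma$ forces the evaluation of $\rho_{N}(\nu(n)\lhd\alpha(g_2))$, i.e.\ the commutation of the $N$-coaction past the $N$-action; this is exactly the Yetter--Drinfel'd compatibility (\ref{Yetter-Drinfeld}). After substituting (\ref{Yetter-Drinfeld}) and reorganising the Sweedler indices of $h$, $x$, $g$ via the Hom-associativity (\ref{Hom-associativity_Weak_unitality}) and Hom-coassociativity (\ref{Hom_coassociativity_Weak_counitality}) of $H$, the left-hand side should match $\Delta(\alpha(h))(\sigma(x\otimes n)\Delta(g))$.

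I expect this last computation to be the main obstacle: it is the only step genuinely mixing the action and coaction of $N$, hence the only one requiring (\ref{Yetter-Drinfeld}), and the real difficulty is bookkeeping, namely tracking the powers of $\alpha$ and the distribution of the comultiplications of $h$, $x$ and $g$ so that, after a single clean application of (\ref{Yetter-Drinfeld}), the two sides agree term by term. Conceptually this proposition is the $\mathcal{C}=\mathcal{M}_k$ incarnation of the correspondence established in the general tensor-category theorem of Section 3, and (\ref{Yetter-Drinfeld}) is just (\ref{Y-D-condition}) with $\alpha^{-1}$ applied to its $H$-leg, so one could alternatively deduce the statement from that theorem; but a direct Sweedler verification in the style of Proposition \ref{left-cov-structure} is cleaner to present here.
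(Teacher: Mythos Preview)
Your proposal is correct and follows essentially the same route as the paper: invoke Proposition \ref{left-cov-structure} for the left-covariant structure, verify that $\sigma$ is a right Hom-comodule structure and that Hom-commutativity reduces to the Hom-coassociativity relation (\ref{relation-left-coass-tens-pro}) without using (\ref{Yetter-Drinfeld}), and then establish right covariance by expanding both sides of $\sigma((g(h\otimes n))\alpha(k))=\Delta(\alpha(g))(\sigma(h\otimes n)\Delta(k))$ and matching them via a single application of the Yetter--Drinfel'd compatibility. The paper carries out exactly this computation, writing the two sides as (\ref{right-covariance-1}) and (\ref{right-covariance-2}) and then specializing (\ref{Yetter-Drinfeld}) at $h=\alpha(k_2)$ to identify the residual $N$-legs; your anticipated bookkeeping difficulty is real but manageable.
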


\begin{proof} As has been proven in Proposition \ref{left-cov-structure}, the left-covariant $(H,\alpha)$-Hom-bimodule structure on $(H\otimes N , \alpha\otimes\nu)$ is deduced from the right $(H,\alpha)$-Hom-action on $(N,\nu)$ by the morphisms (\ref{left-Hom-action})-(\ref{left-Hom-coaction}). The morphism (\ref{right-Hom-coaction}) fulfills the Hom-coassociativity and Hom-counity:
\begin{eqnarray*}(\alpha^{-1}\otimes\nu^{-1})((h\otimes n)_{[0]})\otimes \Delta((h\otimes n)_{[1]})&=&(\alpha^{-1}(h_1)\otimes\nu^{-1}(n_{(0)}))\otimes \Delta(h_2n_{(1)})\\
&=&(\alpha^{-1}(h_1)\otimes\nu^{-1}(n_{(0)}))\otimes (h_{21}n_{(1)1}\otimes h_{22}n_{(1)2})\\
&=&(h_{11}\otimes n_{(0)(0)})\otimes (h_{12}n_{(0)(1)}\otimes \alpha^{-1}(h_2)\alpha^{-1}(n_{(1)}))\\
&=&(h\otimes n)_{[0][0]}\otimes((h\otimes n)_{[0][1]}\otimes\alpha^{-1}((h\otimes n)_{[1]})),
\end{eqnarray*}
where the fact that $(N,\nu)$ is a right $(H,\alpha)$-Hom-comodule and the relation (\ref{relation-left-coass-tens-pro}) have been used in the third equation, and we besides obtain
 $$(h\otimes n)_{[0]}\varepsilon((h\otimes n)_{[1]})=(h_1\otimes n_{(0)})\varepsilon(h_2n_{(1)})=h_1\varepsilon(h_2)\otimes n_{(0)}\varepsilon(n_{(1)})=(\alpha^{-1}\otimes\nu^{-1})(h\otimes n).$$

By again using the relation (\ref{relation-left-coass-tens-pro}) and the fact that the right $(H,\alpha)$-Hom-coaction on $(N,\nu)$ is a morphism in $\widetilde{\mathcal{H}}(\mathcal{M}_k)$, we prove the Hom-commutativity condition:
\begin{eqnarray*}\alpha(m_{[0](-1)})\otimes(m_{[0](0)}\otimes \alpha^{-1}(m_{[1]}))&=&\alpha^{2}(h_{11})\otimes((h_{12}\otimes\nu^{-1}(n_{(0)}))\otimes\alpha^{-1}(h_2)\alpha^{-1}(n_{(1)}))\\
&=&\alpha(h_1)\otimes((h_{21}\otimes\nu^{-1}(n_{(0)}))\otimes h_{22}\alpha^{-1}(n_{(1)}))\\
&=&\alpha(h_1)\otimes((h_{21}\otimes\nu^{-1}(n)_{(0)})\otimes h_{22}\nu^{-1}(n)_{(1)})\\
&=&m_{(-1)}\otimes(m_{(0)[0]}\otimes m_{(0)[1]}).
\end{eqnarray*}
For $g,h,k \in H$ and $n \in N$, we have
\begin{eqnarray}\label{right-covariance-1}\nonumber\lefteqn{\sigma((g(h\otimes n))\alpha(k))}\hspace{2em}\\
\nonumber&=&\sigma((\alpha^{-1}(g)h\otimes \nu(n))\alpha(k))\\
\nonumber&=&\sigma((\alpha^{-1}(g)h)\alpha(k_1)\otimes \nu(n)\lhd\alpha(k_2))\\
\nonumber&=&(((\alpha^{-1}(g)h)\alpha(k_1))_1\otimes (\nu(n)\lhd\alpha(k_2))_{(0)})\otimes((\alpha^{-1}(g)h)\alpha(k_1))_2(\nu(n)\lhd\alpha(k_2))_{(1)}\\
\nonumber&=&(g_1(h_1k_{11})\otimes \nu((n\lhd k_2)_{(0)}))\otimes (g_2(h_2k_{12}))\alpha((n\lhd k_2)_{(1)})\\
\nonumber&=&(\alpha^{-1}(\alpha(g_1))(h_1k_{11})\otimes \nu((n\lhd k_2)_{(0)}))\otimes \alpha(g_2)((h_2k_{12})(n\lhd k_2)_{(1)})\\
\nonumber&=&(\alpha^{-1}(\alpha(g_1))(h_1\alpha^{-1}(k_1))\otimes \nu((n\lhd \alpha(k_{22}))_{(0)}))\otimes \alpha(g_2)(\alpha(h_2)(k_{21}\alpha^{-1}((n\lhd \alpha(k_{22})_{(1)}))\\
&=&\alpha(g_1)(h_1\alpha^{-1}(k_1)\otimes(n\lhd \alpha(k_{22}))_{(0)})\otimes \alpha(g_2)(\alpha(h_2)(k_{21}\alpha^{-1}((n\lhd \alpha(k_{22})_{(1)})),
\end{eqnarray}
and
\begin{eqnarray}\label{right-covariance-2}\nonumber\Delta(\alpha(g))(\sigma(h\otimes n)\Delta(k))&=&(\alpha(g_1)\otimes \alpha(g_2))(((h_1\otimes n_{(0)})\otimes h_2n_{(1)})(k_1\otimes k_2))\\
\nonumber&=&(\alpha(g_1)\otimes \alpha(g_2))((h_1\otimes n_{(0)})k_1\otimes (h_2n_{(1)})k_2)\\
\nonumber&=&(\alpha(g_1)\otimes \alpha(g_2))((h_1k_{11}\otimes n_{(0)}\lhd k_{12})\otimes \alpha(h_2)(n_{(1)}\alpha^{-1}(k_2))\\
&=&\alpha(g_1)(h_1\alpha^{-1}(k_1)\otimes n_{(0)}\lhd k_{21})\otimes \alpha(g_2)(\alpha(h_2)(n_{(1)}k_{22})).
\end{eqnarray}
The right-hand sides of (\ref{right-covariance-1}) and (\ref{right-covariance-2}) are equal by the compatibility condition (\ref{Yetter-Drinfeld}): To see this, it is enough to set $h=\alpha(k_2)$ in (\ref{Yetter-Drinfeld}) to obtain the following
$$n_{(0)}\lhd \alpha^{-1}(\alpha(k_2)_1)\otimes n_{(1)}\alpha^{-1}(\alpha(k_2)_2)=(n\lhd \alpha(k_2)_2)_{(0)}\otimes \alpha^{-1}(\alpha(k_2)_1(n\lhd \alpha(k_2)_2)_{(1)})$$
$$\Rightarrow \: n_{(0)}\lhd \alpha^{-1}(\alpha(k_{21}))\otimes n_{(1)}\alpha^{-1}(\alpha(k_{22}))=(n\lhd \alpha(k_{22}))_{(0)}\otimes \alpha^{-1}(\alpha(k_{21}))\alpha^{-1}((n\lhd \alpha(k_{22}))_{(1)})$$
$$\Rightarrow \: n_{(0)}\lhd k_{21}\otimes n_{(1)}k_{22}=(n\lhd \alpha(k_{22}))_{(0)}\otimes k_{21}\alpha^{-1}((n\lhd \alpha(k_{22}))_{(1)}).$$
Thus we proved that $(H\otimes N , \alpha\otimes\nu)$ is a bicovariant Hom-bimodule over $(H,\alpha)$.
\end{proof}

\begin{proposition}\label{fundamental-thm-bicov-bimod} If $(M,\mu) \in\widetilde{\mathcal{H}}(\mathcal{M}_k) $ is a bicovariant  $(H,\alpha)$-Hom-bimodule, the $k$-linear map (\ref{left-cov-iso})
in $\widetilde{\mathcal{H}}(\mathcal{M}_k)$ is an isomorphism of bicovariant $(H,\alpha)$-Hom-bimodules, where the right $(H,\alpha)$-Hom-module structure on
$(^{coH}M,\mu|_{^{coH}M})$ is defined by $m\lhd h := P_L(mh)=\widetilde{ad}_R(h)(m),$ for $h\in H$ and $m$ $\in$ $^{coH}M$ and the right $(H,\alpha)$-Hom-comodule structure is obtained by the restriction of right $(H,\alpha)$-Hom-coaction on $(M,\mu)$ fulfilling the condition (\ref{Yetter-Drinfeld}).
\end{proposition}

\begin{proof} Let $(M,\mu)$ be a bicovariant  $(H,\alpha)$-Hom-bimodule with left $(H,\alpha)$-Hom-coaction $\rho:M\to h\otimes M,\:m\mapsto m_{(-1)}\otimes m_{(0)}$ and right $(H,\alpha)$-Hom-coaction $\sigma:M\to M\otimes H,\:m\mapsto m_{[0]}\otimes m_{[1]}$. By Hom-commutativity condition (\ref{Hom-comm}) we get,
$$\sigma(^{coH}M)\subseteq \: ^{coH}M\otimes H,$$
which implies that the restriction of $\varphi$ to $^{coH}M$ can be taken as the right Hom-coaction of $(H,\alpha)$ on $(^{coH}M,\mu|_{^{coH}M})$ : In fact, for $m\in \:^{coH}M$,
\begin{eqnarray*}(id\otimes \sigma)(\rho(m))&=&1\otimes(\mu^{-1}(m_{[0]})\otimes\alpha^{-1}(m_{[1]}))\\
&=&\tilde{a}_{H,M,H}((\rho\otimes id)(\sigma(m)))\\
&=&\alpha(m_{[0](-1)})\otimes(m_{[0](0)}\otimes\alpha^{-1}(m_{[1]})),
\end{eqnarray*}
which purports that $\rho(m_{[0]})=1\otimes \mu^{-1}(m_{[0]})$.

Since it has been proven in Proposition (\ref{fundamental-thm-left-cov-bimod}) that the morphism $\theta:H\otimes \:^{coH}M\to M,\; h\otimes m\mapsto hm$ in (\ref{left-cov-iso}) is an isomorphism of left-covariant $(H,\alpha)$-Hom-bimodules, we next show that it is right $(H,\alpha)$-colinear to conclude that it is an isomorphism of bicovariant $(H,\alpha)$-Hom-bimodules:

$$\sigma(\theta(h\otimes m))=h_1m_{(0)}\otimes h_2m_{(1)}=(\theta\otimes id)((h_1\otimes m_{[0]})\otimes h_2m_{[1]})=(\theta\otimes id)(\sigma^{H\otimes \:^{coH}M}(h\otimes m)),$$

where $\sigma^{H\otimes \:^{coH}M}:H\otimes \:^{coH}M\to (H\otimes \:^{coH}M)\otimes H,\: h\otimes m\mapsto (h_1\otimes m_{[0]})\otimes h_2m_{[1]}$, for $h\in H$ and $m\in \:^{coH}M$, by the equation (\ref{right-Hom-coaction}).

Due to the fact that $(M,\mu)$ is a bicovariant  $(H,\alpha)$-Hom-bimodule, the left-hand sides of (\ref{right-covariance-1}) and (\ref{right-covariance-2}) are equal: Thus, by applying $(\varepsilon\otimes id_{N\otimes H})\circ \tilde{a}_{H,N,H}$ to the right-hand sides of (\ref{right-covariance-1}) and (\ref{right-covariance-2}), we acquire the compatibility condition (\ref{Yetter-Drinfeld}).
 \end{proof}
Hence, by Propositions (\ref{bicovariant-bimod-structure}) and (\ref{fundamental-thm-bicov-bimod}), we acquire
\begin{theorem}\label{one-to-one-Bicov-YD}  There is a one-to-one correspondence, given by (\ref{left-Hom-action})-(\ref{left-Hom-coaction}), (\ref{right-Hom-coaction}) and (\ref{right_action_on_coinvariants}), between bicovariant $(H,\alpha)$-Hom-bimodules $(M,\mu)$ and pairs of a right $(H,\alpha)$-Hom-module and a right $(H,\alpha)$-Hom-comodule structures on $(^{coH}M,\mu|_{^{coH}M})$ fulfilling the compatibility condition (\ref{Yetter-Drinfeld}).
\end{theorem}
We indicate by $^{H}_{H}\widetilde{\mathcal{H}}(\mathcal{M}_k)_H^H$ the category of bicovariant $(H,\alpha)$-Hom-bimodules; the objects are the bicovariant Hom-bimodules with those morphisms that are $(H,\alpha)$-linear and $(H,\alpha)$-colinear on both sides.

\begin{proposition}\label{tensor_prod_bicov_Hom_bimod} Let $(H,\alpha)$ be a monoidal Hom-Hopf algebra and $(M,\mu)$, $(N,\nu)$ be two bicovariant $(H,\alpha)$-Hom-bimodules.
 Then, with the Hom-module and Hom-comodule structures given by (\ref{left_action_left-cov_bimod}), (\ref{right_action_left-cov_bimod}), (\ref{left-coaction-left-cov-bimod})
 and (\ref{right-coaction-right-cov-bimod}),  $(M\otimes_H N, \mu\otimes_H \nu)$ becomes a bicovariant Hom-bimodule over $(H,\alpha)$.
 \end{proposition}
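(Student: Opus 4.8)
The plan is to observe that conditions (1) and (2) in the definition of a bicovariant Hom-bimodule are already in hand. By Proposition \ref{tensor_prod_left-cov_Hom_bimod} the structures (\ref{left_action_left-cov_bimod}), (\ref{right_action_left-cov_bimod}), (\ref{left-coaction-left-cov-bimod}) make $(M\otimes_H N,\mu\otimes_H\nu)$ a left-covariant Hom-bimodule, and by Proposition \ref{tensor_prod_right-cov_Hom_bimod} the \emph{same} left and right actions together with (\ref{right-coaction-right-cov-bimod}) make it a right-covariant Hom-bimodule. Since the two propositions rest on identical bimodule structures, the underlying $(H,\alpha)$-Hom-bimodule is unambiguous, and it only remains to verify the Hom-commutativity condition (\ref{Hom-comm}) relating the left coaction $\rho$ of (\ref{left-coaction-left-cov-bimod}) and the right coaction $\sigma$ of (\ref{right-coaction-right-cov-bimod}) on $M\otimes_H N$.

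First I would evaluate both sides of (\ref{Hom-comm}) on a representative $m\otimes_H n$. Applying $(id\otimes\sigma)\circ\rho$ yields
$$m_{(-1)}n_{(-1)}\otimes\big((m_{(0)[0]}\otimes_H n_{(0)[0]})\otimes m_{(0)[1]}n_{(0)[1]}\big),$$
whereas $\tilde{a}_{H,M\otimes_H N,H}\circ(\rho\otimes id)\circ\sigma$ produces
$$\alpha(m_{[0](-1)}n_{[0](-1)})\otimes\big((m_{[0](0)}\otimes_H n_{[0](0)})\otimes\alpha^{-1}(m_{[1]}n_{[1]})\big).$$
No questions of well-definedness arise at this stage, since both coactions are already known to descend to $\otimes_H$ by the cited propositions; the task is purely to identify these two expressions.

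The key step is to feed in the Hom-commutativity conditions for $M$ and for $N$ in parallel. As $(M,\mu)$ and $(N,\nu)$ are bicovariant, we have the identity $m_{(-1)}\otimes m_{(0)[0]}\otimes m_{(0)[1]}=\alpha(m_{[0](-1)})\otimes m_{[0](0)}\otimes\alpha^{-1}(m_{[1]})$ in $H\otimes M\otimes H$ and the analogous one for $n$ in $H\otimes N\otimes H$. Tensoring these two equalities and then applying the $k$-linear map that multiplies the two outer left $H$-factors, places the two middle factors into $M\otimes_H N$, and multiplies the two right $H$-factors sends the left-hand sides to the first display and the right-hand sides to the second display. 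Because the source identities hold, the two images coincide; invoking $\alpha(xy)=\alpha(x)\alpha(y)$ and $\alpha^{-1}(xy)=\alpha^{-1}(x)\alpha^{-1}(y)$ then rewrites $\alpha(m_{[0](-1)})\alpha(n_{[0](-1)})$ as $\alpha(m_{[0](-1)}n_{[0](-1)})$ and $\alpha^{-1}(m_{[1]})\alpha^{-1}(n_{[1]})$ as $\alpha^{-1}(m_{[1]}n_{[1]})$, matching the second display exactly.

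I expect the main obstacle to be bookkeeping rather than anything conceptual: one must keep the four families of Sweedler indices $m_{(\cdot)}$, $m_{[\cdot]}$, $n_{(\cdot)}$, $n_{[\cdot]}$ disentangled and confirm that the multiplication of the $H$-components is compatible with the multiplicativity of $\alpha$, so that every twist lines up. Once this parallel-substitution device is set up correctly, condition (\ref{Hom-comm}) follows, and together with Propositions \ref{tensor_prod_left-cov_Hom_bimod} and \ref{tensor_prod_right-cov_Hom_bimod} this establishes that $(M\otimes_H N,\mu\otimes_H\nu)$ is a bicovariant Hom-bimodule over $(H,\alpha)$.
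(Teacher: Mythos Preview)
Your proposal is correct and follows essentially the same route as the paper's own proof: both reduce the claim to the Hom-commutativity condition (\ref{Hom-comm}) by citing Propositions \ref{tensor_prod_left-cov_Hom_bimod} and \ref{tensor_prod_right-cov_Hom_bimod}, then verify (\ref{Hom-comm}) by expanding each side on $m\otimes_H n$ and invoking the Hom-commutativity of $M$ and $N$ together with the multiplicativity of $\alpha$. The paper presents this as a single chain of equalities from $\tilde{a}\circ(\rho\otimes id)\circ\sigma$ to $(id\otimes\sigma)\circ\rho$, while you compute the two sides separately and match them, but the substance is identical.
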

\begin{proof} The only condition left to be proven to finish the proof of the statement is the Hom-commutativity of $\rho$ and $\sigma$ :
 \begin{eqnarray*}\lefteqn{(\tilde{a}_{H,M\otimes_HN,H}\circ(\rho\otimes id))(\sigma(m\otimes_Hn))}\hspace{5em}\\
 &=&\tilde{a}((\rho\otimes id)((m_{[0]}\otimes_H n_{[0]})\otimes m_{[1]}n_{[1]}))\\
 &=&\tilde{a}((m_{[0](-1)}n_{[0](-1)}\otimes (m_{[0](0)}\otimes_Hn_{[0](0)}))\otimes m_{[1]}n_{[1]})\\
 &=&\alpha(m_{[0](-1)})\alpha(n_{[0](-1)})\otimes((m_{[0](0)}\otimes_Hn_{[0](0)})\otimes\alpha^{-1}(m_{[1]})\alpha^{-1}(m_{[1]}))\\
 &=&m_{(-1)}n_{(-1)}\otimes((m_{(0)[0]}\otimes_Hn_{(0)[0]})\otimes m_{(0)[1]}n_{(0)[1]})\\
 &=&(id \otimes \sigma)(m_{(-1)}n_{(-1)}\otimes(m_{(0)}\otimes_H n_{(0)}))\\
 &=&((id\otimes \sigma)\circ \rho)(m\otimes_Hn),
 \end{eqnarray*}
 where the fourth equality follows from the Hom-commutativity of the Hom-coactions of $(H,\alpha)$ on $(M,\mu)$ and $(N,\nu)$.
\end{proof}

\begin{lemma}Let $(H,\alpha)$ be a monoidal Hom-Hopf algebra.Then the $k$-linear map $c_{M,N}:M\otimes_HN\to N\otimes_HM$ given by, for $m\in M$ and $n\in N$,
\begin{eqnarray}\label{braiding_bicov_Hom_bimod}c_{M,N}(m\otimes_Hn)&=&m_{(-1)}P_{R}(n_{[0]})\otimes_HP_{L}(m_{(0)})n_{[1]}\\
&=&m_{(-1)}(n_{[0][0]}S(n_{[0][1]}))\otimes_H(S(m_{(0)(-1)})m_{(0)(0)})n_{[1]}
\end{eqnarray}
is a morphism in $^{H}_{H}\widetilde{\mathcal{H}}(\mathcal{M}_k)_H^H$.
\end{lemma}

\begin{proof} Let $(M,\mu)$ and $(N,\nu)$ be bicovariant $(H,\alpha)$-Hom-bimodules. Since $M\otimes_H N$ is linearly spanned by each of the sets $\{hu\otimes_Hv\}$, $\{w\otimes_H zh\}$, $\{hu\otimes_H z\}$, where $h\in H$, $u\in\:\: ^{coH}M$, $v\in\:\:^{coH}N$, $w\in M^{coH}$ and $z\in N^{coH}$, we prove the statement of the lemma for such elements: Since $^{M}\rho(hu)=\Delta(h)\:^{M}\rho(u)=h\cdot\:^{M}\rho(u)=h_11_H\otimes h_2\mu^{-1}(u)=\alpha(h_1)\otimes h_2\mu^{-1}(u)$ and thus $$(id\otimes\:^{M}\rho)(^{M}\rho(hu))=\alpha(h_1)\otimes(\alpha(h_{21})\otimes h_{22}\mu^{-2}(u)),$$ we have

\begin{eqnarray}\label{first_form_of_baraiding}\nonumber c_{M,N}(hu\otimes_Hv)&=&\alpha(h_1)(v_{[0][0]}S(v_{[0][1]}))\otimes_H(S(\alpha(h_{21}))(h_{22}\mu^{-2}(u)))v_{[1]}\\
\nonumber&=&\alpha(h_1)(v_{[0][0]}S(v_{[0][1]}))\otimes_H((\alpha^{-1}(S(\alpha(h_{21})))h_{22})\mu^{-1}(u))v_{[1]}\\
\nonumber&=&\alpha(h_1)(v_{[0][0]}S(v_{[0][1]}))\otimes_H((\varepsilon(h_2)1_H)\mu^{-1}(u))v_{[1]}\\
\nonumber&=&\alpha(h_1\varepsilon(h_2))(v_{[0][0]}S(v_{[0][1]}))\otimes_H(1_H\mu^{-1}(u))v_{[1]}\\
\nonumber&=&h(v_{[0][0]}S(v_{[0][1]}))\otimes_Huv_{[1]}\\
\nonumber&=&h(\nu^{-1}(v_{[0]})S(v_{[1]1}))\otimes_Hu\alpha(v_{[1]2})\\
\nonumber&=&(\alpha^{-1}(h)\nu^{-1}(v_{[0]}))\alpha(S(v_{[1]1}))\otimes_Hu\alpha(v_{[1]2})\\
\nonumber&=&\nu(\alpha^{-1}(h)\nu^{-1}(v_{[0]}))\otimes_H\alpha(S(v_{[1]1}))\mu^{-1}(u\alpha(v_{[1]2}))\\
\nonumber&=&hv_{[0]}\otimes_H\alpha(S(v_{[1]1}))(\mu^{-1}(u)v_{[1]2})\\
\nonumber&=&hv_{[0]}\otimes_H(S(v_{[1]1})\mu^{-1}(u))\alpha(v_{[1]2})\\
\nonumber&=&hv_{[0]}\otimes_H\widetilde{ad}_{R}(v_{[1]})u\\
&=&hv_{[0]}\otimes_Hu\lhd v_{[1]}.
\end{eqnarray}
Similarly, we obtain the following equations
\begin{equation}\label{second_form_braiding_bicov_Hom_bimod}c_{M,N}(w\otimes_H zh)=w_{(-1)}\rhd z\otimes_H w_{(0)}h,\end{equation}
\begin{equation}\label{third_form_braiding_bicov_Hom_bimod}c_{M,N}(hu\otimes_H z)=h\nu^{-1}(z)\otimes_H\mu(u).\end{equation}
By using the formula (\ref{second_form_braiding_bicov_Hom_bimod}), we now prove the right $(H,\alpha)$-linearity and then in the sequel the right $(H,\alpha)$-colinearity of $c_{M,N}$:

\begin{eqnarray*}c_{M,N}((w\otimes_Hzh)g)&=&c_{M,N}(\mu(w)\otimes_H(zh)\alpha^{-1}(g))\\
&=&c_{M,N}(\mu(w)\otimes_H\nu(z)(h\alpha^{-2}(g)))\\
&=&\mu(w)_{(-1)}\rhd \nu(z)\otimes_H\mu(w)_{(0)}(h\alpha^{-2}(g))\\
&=&\alpha(w_{(-1)})\rhd \nu(z)\otimes_H\mu(w_{(0)})(h\alpha^{-2}(g))\\
&=&\nu(w_{(-1)}\rhd z)\otimes_H(w_{(0)}h)\alpha^{-1}(g))\\
&=&c_{M,N}(w\otimes_Hzh)g,
\end{eqnarray*}

\begin{eqnarray*}(c_{M,N}\otimes id_H)(\sigma^{M\otimes_HN}(w\otimes_Hzh))&=&(c_{M,N}\otimes id_H)((w_{[0]}\otimes_H(zh)_{[0]})\otimes w_{[1]}(zh)_{[1]})\\
&=&c_{M,N}(w_{[0]}\otimes_H z_{[0]}h_1)\otimes w_{[1]}(z_{[1]}h_2)\\
&=&c_{M,N}(\mu^{-1}(w)\otimes_H \nu^{-1}(z)h_1)\otimes 1_H(1_Hh_2)\\
&=&(\alpha^{-1}(w_{(-1)})\rhd \nu^{-1}(z)\otimes_H\mu^{-1}(w_{(0)})h_1)\otimes 1_H(1_Hh_2)\\
&=&(\nu^{-1}(w_{(-1)}\rhd z)\otimes_H\mu^{-1}(w_{(0)})h_1)\otimes 1_H(1_Hh_2)\\
&=&(\nu^{-1}(w_{(-1)}\rhd z)\otimes_H w_{(0)[0]}h_1)\otimes 1_H(w_{(0)[1]}h_2)\\
&=&((w_{(-1)}\rhd z)_{[0]}\otimes_H(w_{(0)}h)_{[0]})\otimes(w_{(-1)}\rhd z)_{[1]}(w_{(0)}h)_{[1]}\\
&=&\sigma^{N\otimes_HM}(w_{(-1)}\rhd z\otimes_H w_{(0)}h)\\
&=&\sigma^{N\otimes_HM}(c_{M,N}(w\otimes_Hzh)),
\end{eqnarray*}
where the sixth equality follows from the fact that $^{M}\rho(w)\in \: H\otimes M^{coH}$ and the seventh one results from $w_{(-1)}\rhd z \in N^{coH}$. Analogously, one can also show that $c_{M,N}$ is both left $(H,\alpha)$-linear and left $(H,\alpha)$-colinear, which finishes the proof.
\end{proof}

\begin{proposition}\label{quasi_braiding_bicov_Hom_bimod}Let $(H,\alpha)$ be a monoidal Hom-Hopf algebra with a bijective antipode. Then the $k$-linear map $c_{M,N}:M\otimes_HN\to N\otimes_HM$ given by (\ref{braiding_bicov_Hom_bimod}) in the above lemma is an isomorphism in
$^{H}_{H}\widetilde{\mathcal{H}}(\mathcal{M}_k)_H^H$.
\end{proposition}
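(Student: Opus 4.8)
The plan is to deduce both claims of the proposition from the explicit descriptions of $c_{M,N}$ and $c^{-1}_{M,N}$ on the three families $\{hu\otimes_H v\}$, $\{w\otimes_H zh\}$ and $\{hu\otimes_H z\}$ that linearly span $M\otimes_H N$, where $h\in H$, $u\in{}^{coH}M$, $w\in M^{coH}$, $v\in{}^{coH}N$, $z\in N^{coH}$. These families are available precisely because Theorem \ref{fundamental-thm-bicov-bimod} gives $M=H\cdot{}^{coH}M=M^{coH}\cdot H$ together with the analogous decompositions of $N$. A preliminary observation is that the formulas (\ref{first_form_of_baraiding}), (\ref{second_form_braiding_bicov_Hom_bimod}) and (\ref{third_form_braiding_bicov_Hom_bimod}) are genuine restrictions of the single $k$-linear map (\ref{braiding_bicov_Hom_bimod}), so that a compatibility established on one family is a property of $c_{M,N}$ as a map on all of $M\otimes_H N$ (its well-definedness on the balanced tensor product being the first point to confirm). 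Being a morphism in ${}^{H}_{H}\widetilde{\mathcal{H}}(\mathcal{M}_k)_H^H$ then amounts to the four conditions of left/right $H$-linearity and left/right $H$-colinearity, while the isomorphism claim amounts to exhibiting a two-sided inverse.

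For the morphism property, the right $H$-linearity and right $H$-colinearity have already been verified by direct computation on the family $\{w\otimes_H zh\}$, where the form (\ref{second_form_braiding_bicov_Hom_bimod}) makes the right action and the right coaction act only on the trailing $H$-factor. I would complete the argument with the symmetric computations for left $H$-linearity and left $H$-colinearity, this time using the family $\{hu\otimes_H z\}$ and the form (\ref{third_form_braiding_bicov_Hom_bimod}), on which the left action $g(hu\otimes_H z)=\alpha^{-1}(g)(hu)\otimes_H\nu(z)$ and the left coaction are transparent. These run exactly parallel to the right-handed ones, invoking at the appropriate steps the identity ${}^{M}\rho(hu)=\alpha(h_1)\otimes h_2\mu^{-1}(u)$, the Hom-module axiom $\alpha(g)(hu)=(gh)\mu(u)$, and the facts that $w_{(-1)}\rhd z\in N^{coH}$ and $\mu(u)\in{}^{coH}M$.

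For the isomorphism statement I would assume $S$ bijective and take the candidate inverse $c^{-1}_{M,N}$ of (\ref{inverse_braiding_bicov_Hom_bimod}), whose restrictions to the three families are recorded in (\ref{inverse_of_first_form_of_baraiding}), (\ref{inverse_second_form_braiding_bicov_Hom_bimod}) and (\ref{inverse_third_form_braiding_bicov_Hom_bimod}). On each family I would confirm that both composites $c^{-1}_{M,N}\circ c_{M,N}$ and $c_{M,N}\circ c^{-1}_{M,N}$ equal the identity; the verification on $\{hu\otimes_H v\}$ has been displayed in full, and the families $\{w\otimes_H zh\}$ and $\{hu\otimes_H z\}$ are treated identically, the only new input being the $S^{-1}$-antipode identities $S^{-1}(h_2)h_1=\varepsilon(h)1_H=h_2 S^{-1}(h_1)$ used in place of $S(h_1)h_2=\varepsilon(h)1_H$. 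Since $c_{M,N}$ is $k$-linear and bijective on a spanning set of $M\otimes_H N$, it is bijective, and by the first part its inverse is again $H$-linear and $H$-colinear on both sides; hence $c_{M,N}$ is an isomorphism in ${}^{H}_{H}\widetilde{\mathcal{H}}(\mathcal{M}_k)_H^H$.

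The main obstacle is not conceptual but lies in the bookkeeping of the twisting automorphism $\alpha$: each reassociation performed via the Hom-associativity relation $\alpha(a)(bc)=(ab)\alpha(c)$ shifts powers of $\alpha$, and these shifts must cancel precisely against the $\alpha$-twists built into the coactions $\rho$, $\sigma$ and into the adjoint actions $\widetilde{ad}_R$, $\widetilde{ad}_L$ defining the structures on ${}^{coH}M$ and $M^{coH}$. Making the Hom-counit relation $\varepsilon(h_1)h_2=\alpha^{-1}(h)$ and the (inverse) antipode identities land with the correct $\alpha$-power is where all the care is required; once the right spanning family is chosen for each of the four compatibilities, the residual manipulations are routine.
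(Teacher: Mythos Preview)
Your proposal is correct and follows essentially the same approach as the paper: the paper also works with the three spanning families $\{hu\otimes_H v\}$, $\{w\otimes_H zh\}$, $\{hu\otimes_H z\}$, verifies right $H$-linearity and right $H$-colinearity via (\ref{second_form_braiding_bicov_Hom_bimod}), states that left $H$-linearity and left $H$-colinearity are obtained ``in a similar way'', and checks the inverse property on the family $\{hu\otimes_H v\}$ while asserting the other cases follow analogously. Your suggestion to use the family $\{hu\otimes_H z\}$ and the form (\ref{third_form_braiding_bicov_Hom_bimod}) for the left-handed compatibilities makes explicit what the paper leaves implicit, and your remark about well-definedness on the balanced tensor product is a point the paper does not spell out.
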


\begin{proof} In the above lemma, it has already been proven that $c_{M,N}$, where $(M,\mu)$ and $(N,\nu)$ are bicovariant $(H,\alpha)$-Hom-bimodules, is a morphism in $^{H}_{H}\widetilde{\mathcal{H}}(\mathcal{M}_k)_H^H$. Hereby we prove that it is an invertible linear map to finish the proof of the proposition: Define the $k$-linear map $c^{-1}_{N,M}:N\otimes_HM\to M\otimes_HN$ by
\begin{equation}\label{inverse_braiding_bicov_Hom_bimod}
                 c^{-1}_{N,M}(n\otimes_Hm)=n_{[1]}(m_{(0)(0)}S^{-1}(m_{(0)(-1)}))\otimes_H(S^{-1}(n_{[0][1]})n_{[0][0]})m_{(-1)}.\end{equation}

For $h\in H$, $u\in\:\: ^{coH}M$, $v\in\:\:^{coH}N$, we get
\begin{eqnarray}\label{inverse_of_first_form_of_baraiding}
\nonumber c^{-1}_{M,N}(hv\otimes_Hu)&=&(h_2v_{[1]})(\mu^{-2}(u)S^{-1}(1_H))\otimes_H(S^{-1}(h_{12}v_{[0][1]})(h_{11}v_{[0][0]}))1_H\\
\nonumber&=&(h_2v_{[1]})\mu^{-1}(u)\otimes_H(S^{-1}(v_{[0][1]})S^{-1}(h_{12}))(h_{11}v_{[0][0]}))1_H\\
\nonumber&=&\alpha(h_2)(v_{[1]}\mu^{-2}(u))\otimes_H((\alpha^{-1}(S^{-1}(v_{[0][1]})S^{-1}(h_{12}))h_{11})\nu(v_{[0][0]}))1_H\\
\nonumber&=&\alpha(h_2)(v_{[1]}\mu^{-2}(u))\otimes_H((S^{-1}(v_{[0][1]})\alpha^{-1}(S^{-1}(h_{12})h_{11}))\nu(v_{[0][0]}))1_H\\
\nonumber&=&\alpha(\varepsilon(h_1)h_2)(v_{[1]}\mu^{-2}(u))\otimes_H((S^{-1}(v_{[0][1]})1_H)\nu(v_{[0][0]}))1_H\\
\nonumber&=&h(\alpha(v_{[1]2})\mu^{-2}(u))\otimes_H(\alpha(S^{-1}(v_{[1]1}))v_{[0]})1_H\\
\nonumber&=&h(\alpha(v_{[1]2})\mu^{-2}(u))\otimes_H\alpha^{2}(S^{-1}(v_{[1]1}))\nu(v_{[0]})\\
\nonumber&=&(\alpha^{-1}(h)(v_{[1]2}\mu^{-3}(u)))\alpha^{2}(S^{-1}(v_{[1]1}))\otimes_H\nu^{2}(v_{[0]})\\
\nonumber&=&h((S(S^{-1}(v_{[1]})_1)\mu^{-1}(\mu^{-2}(u)))\alpha(S^{-1}(v_{[1]})_2))\otimes_H\nu^{2}(v_{[0]})\\
\nonumber&=&h((v_{[1]2}\mu^{-3}(u))\alpha(S^{-1}(v_{[1]1})))\otimes_H\nu^{2}(v_{[0]})\\
\nonumber&=&h(\widetilde{ad}_{R}(S^{-1}(v_{[1]}))\mu^{-2}(u))\otimes_H\nu^{2}(v_{[0]})\\
&=&h(\mu^{-2}(u) \lhd S^{-1}(v_{[1]}))\otimes_H\nu^{2}(v_{[0]}),
\end{eqnarray}
and we now verify that $c^{-1}_{M,N}$ is the inverse of $c_{M,N}$ in this case;

\begin{eqnarray*}c^{-1}_{M,N}(c_{M,N}(hu\otimes_Hv))&=&c^{-1}_{M,N}(hv_{[0]}\otimes_Hu\lhd v_{[1]})\\
&=&h(\mu^{-2}(u\lhd v_{[1]})\lhd S^{-1}(v_{[0][1]}))\otimes_H\nu^{2}(v_{[0][0]})\\
&=&h((\mu^{-2}(u)\lhd \alpha^{-2}(v_{[1]}))\lhd S^{-1}(v_{[0][1]}))\otimes_H\nu^{2}(v_{[0][0]})\\
&=&h(\mu^{-1}(u)\lhd\alpha^{-1}(\alpha^{-1}(v_{[1]})S^{-1}(v_{[0][1]})))\otimes_H\nu^{2}(v_{[0][0]})\\
&=&h(\mu^{-1}(u)\lhd\alpha^{-1}(v_{[1]2} S^{-1}(v_{[1]1})))\otimes_H\nu(v_{[0]})\\
&=&h(\mu^{-1}(u)\lhd 1_H)\otimes_H\nu(v_{[0]}\varepsilon(v_{[1]}))\\
&=&hu\otimes_Hv,
\end{eqnarray*}
and
\begin{eqnarray*}c_{M,N}(c^{-1}_{M,N}(hv\otimes_Hu))&=&c_{M,N}(h(\mu^{-2}(u) \lhd S^{-1}(v_{[1]}))\otimes_H\nu^{2}(v_{[0]}))\\
&=&h\nu^{2}(v_{[0][0]})\otimes_H(\mu^{-2}(u) \lhd S^{-1}(v_{[1]}))\lhd \alpha^{2}(v_{[0][1]})\\
&=&h\nu^{2}(v_{[0][0]})\otimes_H\mu^{-1}(u) \lhd ( S^{-1}(v_{[1]})\alpha(v_{[0][1]}))\\
&=&h\nu(v_{[0]})\otimes_H\mu^{-1}(u)\lhd \alpha( S^{-1}(v_{[1]2})v_{[1]1})\\
&=&h\nu(v_{[0]}\varepsilon(v_{[1]}))\otimes_H\mu^{-1}(u)\lhd 1_H\\
&=&hv\otimes_Hu.
\end{eqnarray*}
By a similar reasoning we obtain, for $h\in H$, $w\in M^{coH}$, $z\in N^{coH}$ and $u\in\:\: ^{coH}M$, the following formulas:
\begin{equation}\label{inverse_second_form_braiding_bicov_Hom_bimod}c^{-1}_{M,N}(z\otimes_H wh)=\mu^{2}(w_{(0)})\otimes_H (S^{-1}(w_{(-1)})\rhd \nu^{-2}(z))h,\end{equation}
\begin{equation}\label{inverse_third_form_braiding_bicov_Hom_bimod}c^{-1}_{M,N}(hz\otimes_H u)=h\mu^{-1}(u)\otimes_H\nu(z),\end{equation}
and thus for each of the sets $\{w\otimes_H zh\}$, $\{hu\otimes_H z\}$ linearly spanning $M\otimes_HN$, we also get $c^{-1}_{M,N}(c_{M,N}(w\otimes_Hzh))=w\otimes_Hzh$, $c_{M,N}(c^{-1}_{M,N}(z\otimes_Hwh))=z\otimes_Hwh$, $c^{-1}_{M,N}(c_{M,N}(hu\otimes_Hz))=hu\otimes_Hz$ and $c_{M,N}(c^{-1}_{M,N}(hz\otimes_Hu))=hz\otimes_Hu$.
\end{proof}

\begin{theorem}$^{H}_{H}\widetilde{\mathcal{H}}(\mathcal{M}_k)_H^H$ is a prebraided tensor category. It is a braided monoidal category if $(H,\alpha)$ has an invertible antipode.
\end{theorem}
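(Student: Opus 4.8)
The plan is to assemble the statement from the results already in place rather than to build everything anew. By Proposition \ref{tensor_prod_bicov_Hom_bimod}, together with the associativity constraint (\ref{associator-formula}) and the unit constraints (\ref{left-unit-formula})--(\ref{right-unit-formula}) from Proposition \ref{associator_left-cov_Hom_bimod} and Proposition \ref{units_left-cov_Hom_bimod}, the data $(^{H}_{H}\widetilde{\mathcal{H}}(\mathcal{M}_k)_H^H,\otimes_H,(k,id_k),\tilde{a},\tilde{l},\tilde{r})$ already form a tensor category: the constraints are $H$-bilinear and left $H$-colinear by the left-covariant computations and right $H$-colinear by the right-covariant ones, so they are morphisms in the bicovariant category. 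Moreover Proposition \ref{quasi_braiding_bicov_Hom_bimod} supplies a morphism $c_{M,N}\colon M\otimes_H N\to N\otimes_H M$ in this category which is invertible when $S$ is bijective. Hence the only things left to verify are that the family $\{c_{M,N}\}$ is \emph{natural} and satisfies the two \emph{hexagon identities}; the passage from prebraided to braided under the bijective-antipode hypothesis is then immediate from Proposition \ref{quasi_braiding_bicov_Hom_bimod}.

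First I would establish naturality. Every morphism $f$ in $^{H}_{H}\widetilde{\mathcal{H}}(\mathcal{M}_k)_H^H$ is $H$-linear and $H$-colinear on both sides, so it carries $^{coH}M$ into $^{coH}M'$ and $M^{coH}$ into $M'^{coH}$. Evaluating both composites on the spanning family $\{hu\otimes_H v\}$ with $h\in H$, $u\in{}^{coH}M$, $v\in{}^{coH}N$ and using the reduced formula (\ref{first_form_of_baraiding}), the naturality square in the first variable collapses to the identities $f(hu)=hf(u)$ together with $f(u)\in{}^{coH}M'$, and symmetrically in the second variable using colinearity and right Hom-coinvariants; both hold by definition of a morphism in the category.

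The heart of the argument is the two hexagon axioms, and here the strategy is to exploit the three reduced expressions (\ref{first_form_of_baraiding}), (\ref{second_form_braiding_bicov_Hom_bimod}) and (\ref{third_form_braiding_bicov_Hom_bimod}) for $c$, selecting for each hexagon the spanning set that forces the active tensor factor into the image of the coinvariants. Concretely, for the hexagon computing $c_{M,N\otimes_H P}$ I would evaluate both composites on $hu\otimes_H(n\otimes_H p)$ with $u\in{}^{coH}M$: on one side apply $\tilde{a}^{-1}$, $(id_N\otimes_H c_{M,P})$, $\tilde{a}$, $(c_{M,N}\otimes_H id_P)$, $\tilde{a}^{-1}$ in turn, simplifying each braiding step through (\ref{first_form_of_baraiding}); on the other compute $c_{M,N\otimes_H P}$ directly from the tensor-product coactions (\ref{left-coaction-left-cov-bimod}) and (\ref{right-coaction-right-cov-bimod}). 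The second hexagon is handled dually with the form (\ref{second_form_braiding_bicov_Hom_bimod}) on a spanning set of the shape $\{w\otimes_H zh\}$.

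The main obstacle is precisely the bookkeeping of the twisting automorphisms. In contrast to the untwisted setting, each occurrence of the associator (\ref{associator-formula}) inserts powers of $\mu,\nu,\pi$, while each braiding step introduces factors of $\alpha^{\pm1}$ through the adjoint Hom-action $\widetilde{ad}_R$ and through the left and right coactions. The two hexagon identities will therefore close only after repeatedly invoking Hom-associativity (\ref{Hom-associativity_Weak_unitality}), Hom-coassociativity (\ref{Hom_coassociativity_Weak_counitality}) and the Yetter--Drinfel'd compatibility (\ref{Yetter-Drinfeld}) to migrate these automorphisms onto the correct tensor factors, exactly as in the verifications of Proposition \ref{quasi_braiding_bicov_Hom_bimod}. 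Once both hexagons are confirmed, the category is prebraided; combining this with the invertibility clause of Proposition \ref{quasi_braiding_bicov_Hom_bimod} yields that it is braided whenever the antipode $S$ is invertible, which completes the proof.
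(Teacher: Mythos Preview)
Your proposal is correct in outline and would succeed, but it takes a considerably heavier route than the paper's proof. You fix only one factor in the coinvariants (e.g.\ $u\in{}^{coH}M$ with $n,p$ arbitrary) and then plan to push through the hexagon by repeated use of (\ref{first_form_of_baraiding}), (\ref{second_form_braiding_bicov_Hom_bimod}), the Hom-(co)associativity relations and the Yetter--Drinfel'd compatibility (\ref{Yetter-Drinfeld}); as you yourself anticipate, this entails a substantial amount of automorphism bookkeeping. The paper instead observes that $(M\otimes_H N)\otimes_H P$ is generated as a left $H$-Hom-module by elements $(u\otimes_H z)\otimes_H p$ with $u\in{}^{coH}M$, $z\in N^{coH}$, $p\in P^{coH}$, i.e.\ it places \emph{all three} tensorands in appropriately chosen (mixed left and right) coinvariant subspaces. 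On such elements formula (\ref{third_form_braiding_bicov_Hom_bimod}) shows that the braiding degenerates to the bare flip $c_{M,N}(u\otimes_H z)=z\otimes_H u$, so both hexagons collapse to a three-line check with essentially no twisting to track. What your approach buys is a slightly more self-contained argument that does not rely on knowing that right coinvariants of the tensor factors assemble into coinvariants of the tensor product; what the paper's approach buys is the elimination of exactly the obstacle you identify as the ``main obstacle''. Your added verification of naturality is fine (the paper leaves it implicit, as $c_{M,N}$ is built from the structure maps $P_L$, $P_R$ and the coactions, all natural in $M$ and $N$).
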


\begin{proof} We have already verified that $^{H}_{H}\widetilde{\mathcal{H}}(\mathcal{M}_k)_H^H$ is a tensor category, with tensor product
$\otimes_H$ is defined in Proposition (\ref{tensor_prod_bicov_Hom_bimod}), and associativity constraint $\tilde{a}$, left unit constraint $\tilde{l}$ and right
unit constraint $\tilde{r}$ are given by (\ref{associator-formula}), (\ref{left-unit-formula}) and (\ref{right-unit-formula}), respectively. Thereby, together with
the Proposition (\ref{quasi_braiding_bicov_Hom_bimod}), to demonstrate that the Hexagon Axioms for $c_{M,N}$ hold finishes the proof the statement. Since
$(M\otimes_HN)\otimes_HP$ is generated as a left  $(H,\alpha)$-Hom-module by the elements $(u\otimes_Hz)\otimes_Hp$ where
$u\in\:^{coH}M$, $z\in N^{coH}$ and $p\in P^{coH}$, it is sufficient to prove the hexagonal relations for such elements.
One can first note that $c_{M,N}(u\otimes_H z)=z\otimes_H u$ and thus

\begin{eqnarray*}\lefteqn{(\tilde{a}_{N,P,M}\circ c_{M,N\otimes_HP}\circ \tilde{a}_{M,N,P})((u\otimes_Hz)\otimes_Hp)}\hspace{4em}\\
&=&\nu(z)\otimes_H(\pi^{-1}(p)\otimes_Hu)\\
&=&(id_N\otimes c_{M,P})(\nu(z)\otimes_H(u\otimes_H\pi^{-1}(p))\\
&=&(id_N\otimes_H c_{M,P}\circ\tilde{a}_{N,M,P})((z\otimes_Hu)\otimes_Hp)\\
&=&(id_N\otimes_H c_{M,P}\circ\tilde{a}_{N,M,P}\circ (c_{M,N}\otimes_Hid_P))((u\otimes_Hz)\otimes_Hp),
\end{eqnarray*}
which asserts the first hexagon axiom and the second one is obtained by a similar reasoning.
\end{proof}
\begin{remark}The (pre)braiding $c_{M,N}$ defined by (\ref{braiding_bicov_Hom_bimod}) is called Woronowicz' (pre)braiding.
\end{remark}
\begin{lemma}Let $(H,\alpha)$ be a monoidal Hom-Hopf algebra with bijective antipode and $(M,\mu)$ be a bicovariant Hom-bimodule with left Hom-coaction $m\mapsto m_{(-1)}\otimes m_{(0)} $ and right Hom-coaction $m\mapsto m_{[0]}\otimes m_{[1]}$. Then the morphism $\Phi:M\to M$, in $\widetilde{\mathcal{H}}(\mathcal{M}_k)$,  given by
$$\Phi(m)=(S(m_{[0](-1)})m_{[0](0)})S(m_{[1]})=S(m_{(-1)})(m_{(0)[0]}S(m_{(0)[1]}))$$
is bijective. Furthermore, it restricts to an isomorphism of the subobjects $^{coH}M$ and $M^{coH}$.
\end{lemma}
\begin{proof}Let us set $\Psi(m)=(S^{-1}(m_{(0)[1]})m_{(0)[0]})S^{-1}(m_{(-1)})$. Since the following equality holds:

\begin{eqnarray*}\lefteqn{\Phi(m)_{(-1)}\otimes \Phi(m)_{(0)[0]}\otimes \Phi(m)_{(0)[1]}}\hspace{3em}\\
&=&S(\alpha(m_{[1]2}))\otimes (S(\alpha^{-1}(m_{[0](-1)2}))\mu^{-2}(m_{[0](0)}))S(\alpha^{-1}(m_{[1]1}))\otimes S(\alpha(m_{[0](-1)1})),
\end{eqnarray*}
we compute
\begin{eqnarray*} &&\Psi(\Phi(m))\\
&=&(S^{-1}(\Phi(m)_{(0)[1]})\Phi(m)_{(0)[0]})S^{-1}(\Phi(m)_{(-1)})\\
&=&(S^{-1}(S(\alpha(m_{[0](-1)1})))[(S(\alpha^{-1}(m_{[0](-1)2}))\mu^{-2}(m_{[0](0)}))S(\alpha^{-1}(m_{[1]1}))])S^{-1}(S(\alpha(m_{[1]2})))\\
&=&(\alpha(m_{[0](-1)1})[(S(\alpha^{-1}(m_{[0](-1)2}))\mu^{-2}(m_{[0](0)}))S(\alpha^{-1}(m_{[1]1}))])\alpha(m_{[1]2})\\
&=&([m_{[0](-1)1}(S(\alpha^{-1}(m_{[0](-1)2}))\mu^{-2}(m_{[0](0)}))]S(m_{[1]1}))\alpha(m_{[1]2})\\
&=&([(\alpha^{-1}(m_{[0](-1)1})S(\alpha^{-1}(m_{[0](-1)2})))\mu^{-1}(m_{[0](0)})]S(m_{[1]1}))\alpha(m_{[1]2})\\
&=&([\alpha^{-1}(m_{[0](-1)1}S(m_{[0](-1)2}))\mu^{-1}(m_{[0](0)})]S(m_{[1]1}))\alpha(m_{[1]2})\\
&=&(\varepsilon(m_{[0](-1)})m_{[0](0)}S(m_{[1]1}))\alpha(m_{[1]2})\\
&=&(\mu^{-1}(m_{[0]})S(m_{[1]1}))\alpha(m_{[1]2})\\
&=&m_{[0]}(S(m_{[1]1})m_{[1]2})=m.
\end{eqnarray*}
In a similar way, one can easily get $\Phi(\Psi(m))=m$ for any $m\in M$ meaning $\Phi$ is bijective with inverse $\Psi$. It can also be shown that $\mu\circ\Phi =\Phi\circ\mu$ and $\mu\circ\Psi =\Psi\circ\mu$. To prove the second statement in the lemma, we next show that $\Phi:\:^{coH}M\to M^{coH}$ and $\Psi:M^{coH}\:\to \: ^{coH}M$: For any $m\in\: ^{coH}M$, we obtain
\begin{eqnarray*}\Phi(m)&=&S(m_{(-1)})(m_{(0)[0]}S(m_{(0)[1]}))\\
&=&S(1)(\mu^{-1}(m_{[0]})S(\alpha^{-1}(m_{[1]})))=m_{[0]}S(m_{[1]})=P_R(m),
\end{eqnarray*}
that is, $\Phi(m)\in M^{coH}$, and for any $n\in M^{coH}$, we have
\begin{eqnarray*}\Psi(n)&=&(S^{-1}(n_{(0)[1]})n_{(0)[0]})S^{-1}(n_{(-1)})\\
&=&S^{-1}(n_{[1]})(n_{[0](0)}S^{-1}(n_{[0](-1)}))=S^{-1}(1)(\mu^{-1}(n_{(0)})S^{-1}(\alpha^{-1}(n_{(-1)})))\\
&=&n_{(0)}S^{-1}(n_{(-1)})=(n_{(0)(-1)}P_L(n_{(0)(0)}))S^{-1}(n_{(-1)})\\
&=&(n_{(0)(-1)}S^{-1}(n_{(-1)})_1)(P_L(n_{(0)(0)})\lhd S^{-1}(n_{(-1)})_2)\\
&=&(n_{(-1)2}S^{-1}(\alpha(n_{(-1)1}))_1)(P_L(\mu^{-1}(n_{(0)}))\lhd S^{-1}(\alpha(n_{(-1)1}))_2)\\
&=&(n_{(-1)2}S^{-1}(\alpha(n_{(-1)12})))(P_L(\mu^{-1}(n_{(0)}))\lhd S^{-1}(\alpha(n_{(-1)11})))\\
&=&(\alpha(n_{(-1)22})S^{-1}(\alpha(n_{(-1)21})))(P_L(\mu^{-1}(n_{(0)}))\lhd S^{-1}(n_{(-1)1}))\\
&=&\alpha(\varepsilon(n_{(-1)2})1)(P_L(\mu^{-1}(n_{(0)}))\lhd S^{-1}(n_{(-1)1}))\\
&=&1(P_L(\mu^{-1}(n_{(0)}))\lhd S^{-1}(\alpha^{-1}(n_{(-1)})))=P_L(n_{(0)})\lhd S^{-1}(n_{(-1)}),
\end{eqnarray*}
i.e., $\Psi(n)\in\: ^{coH}M$ for all $n\in M^{coH}$.
\end{proof}

We now restate the structure theory of bicovariant Hom-bimodules in coordinate form in the following, here we assume that the scalars belong to a field $k$.

\begin{theorem}Let $(M,\mu)$ be a bicovariant $(H,\alpha)$-Hom-bimodule with right $(H,\alpha)$-Hom-coaction $\varphi:M\to M\otimes H, \: m\mapsto m_{[0]}\otimes m_{[1]}$ and $\{m_{i}\}_{i\in I}$ be a linear basis of $\:^{coH}M$. Then there exist a pointwise finite matrices $(f^{i}_j)_{i,j\in I}$ and $(v^{i}_j)_{i,j\in I}$ of linear functionals $f^{i}_j\in H'$ and elements $v^{i}_j\in H$ such that for any $h,g\in H$ and $i,j,k \in I$ we have
\begin{enumerate}
\item $\mu^{i}_jf^j_k(hg)=f^{i}_j(h)f^j_k(\alpha(g)), \: f^{i}_j(1)=\mu^{i}_j;\:m_{i}h=(\bar{\mu}^{i}_jf^j_k\bullet\alpha^{-1}(h))m_k,$
\item $\varphi(m_i)=m_j\otimes v^j_i,$
where $v^{i}_j\in H,\: i,j\in I$, satisfy the relations
$$\Delta(v^l_i)=\mu^j_lv^j_k\otimes \alpha^{-1}(v^k_i),\: \varepsilon(v^k_i)=\bar{\mu}^{i}_k,$$
\item the equality
\begin{equation}v^k_i(h\bullet (f^k_j\circ\alpha))=((f^{i}_k\circ\alpha^{2})\bullet h)\alpha^{-1}(v^j_k)\end{equation}

holds. Moreover, $\{n_{i}:=m_jS(v^j_i)\}_{i\in I}$ is a linear basis of $M^{coH}$. $\{m_{i}\}_{i\in I}$ and $\{n_{i}\}_{i\in I}$ are both free left $(H,\alpha)$-Hom-module bases and free right $(H,\alpha)$-Hom-module bases of $M$.
\end{enumerate}
\end{theorem}

\begin{proof} (1) had already been proven in Theorem (\ref{fund-thm-of-left-cov-in-coord-form}). Since $\varphi(^{coH}M)\subseteq\:^{coH}M\otimes H$, there exists a pointwise finite matrix $(v^{i}_j)_{i,j\in I}$ of elements $v^{i}_j\in H$ such that $\varphi(m_i)=m_k\otimes v^{k}_i$. Let us write $\varphi(m_i)=m_{i,[0]}\otimes m_{i,[1]}$. Then, by the Hom-coassociativity and Hom-counity of $\varphi$ we have
\begin{eqnarray*}(\bar{\mu}^k_jm_j)\otimes\Delta(v^k_{i})&=&\mu^{-1}(m_{k})\otimes \Delta(v^k_{i})=\mu^{-1}(m_{i,[0]})\otimes \Delta(m_{i,[1]})\\
&=&m_{i,[0][0]}\otimes m_{i,[0][1]}\otimes \alpha^{-1}(m_{i,[1]})\\
&=&m_j\otimes v^j_k\otimes \alpha^{-1}(v^k_i),
\end{eqnarray*}
which implies $\Delta(v^l_{i})=\mu^j_lv^j_k\otimes \alpha^{-1}(v^k_i)$ by the relation $\mu^j_l\bar{\mu}^k_j=\delta_{lk}$, and
$$\bar{\mu}^{i}_km_k=\mu^{-1}(m_i)=m_{i,[0]}\varepsilon(m_{i,[1]})=m_k\varepsilon(v^k_i),$$
which finishes the proof of item (2). To prove (3), let $i\in I$ and $h\in H$. Then
\begin{eqnarray*}m_{i,[0]}\lhd\alpha^{-1}(h_1)\otimes m_{i,[1]}\alpha^{-1}(h_2)&=&m_{k}\lhd\alpha^{-1}(h_1)\otimes v^k_i\alpha^{-1}(h_2)\\
&=&f^k_j(\alpha^{-1}(h_1))m_j\otimes v^k_i\alpha^{-1}(h_2)\\
&=&m_j\otimes v^k_i(f^k_j(\alpha^{-1}(h_1))\alpha^{-1}(h_2))\\
&=&m_j\otimes v^k_i\alpha^{-1}(f^k_j(\alpha^{-1}(h_1))h_2)\\
&=&m_j\otimes v^k_i\alpha^{-1}(\alpha^{-2}(h)\bullet f^k_j),
\end{eqnarray*}

\begin{eqnarray*}(m_i\lhd h_2)_{[0]}\otimes \alpha^{-1}(h_1)\alpha^{-1}((m_i\lhd h_2)_{[1]})&=&m_{k}\otimes \alpha^{-1}(h_1)f^{i}_j(h_2)\alpha^{-1}(v^k_j)\\
&=&m_{k}\otimes \alpha^{-1}(h_1f^{i}_j(h_2))\alpha^{-1}(v^k_j)\\
&=&m_{k}\otimes \alpha^{-1}((f^{i}_j\circ\alpha)\bullet\alpha^{-2}(h))\alpha^{-1}(v^k_j).
\end{eqnarray*}
Thus, by Hom-Yetter-Drinfeld condition (\ref{Yetter-Drinfeld}), we acquire
$$v^k_i\alpha^{-1}(\alpha^{-2}(h)\bullet f^k_j)=\alpha^{-1}((f^{i}_k\circ\alpha)\bullet\alpha^{-2}(h))\alpha^{-1}(v^j_k),$$
that is, $v^k_i(\alpha^{-3}(h)\bullet (f^k_j\circ\alpha))=((f^{i}_k\circ\alpha^{2})\bullet\alpha^{-3}(h))\alpha^{-1}(v^j_k)$ holds. If we replace $\alpha^{-3}(h)$ by $h$, we get the required equality $v^k_i(h\bullet (f^k_j\circ\alpha))=((f^{i}_k\circ\alpha^{2})\bullet h)\alpha^{-1}(v^j_k)$. By the above Lemma, we obtain $n_i=\Phi(m_i)=m_{i,[0]}S(m_{i,[1]})=m_kS(v^k_i)$ for all $i\in I$. In Theorem (\ref{fund-thm-of-left-cov-in-coord-form}), we have shown that $\{m_{i}\}_{i\in I}$ is both free left $(H,\alpha)$-Hom-module basis and free right $(H,\alpha)$-Hom-module basis of $M$. Similarly, one can prove that this statement also holds for  $\{n_{i}\}_{i\in I}$.
\end{proof}

\section{Yetter-Drinfel'd Modules over Monoidal Hom-Hopf Algebras}
 In this section, we present and study the category of Yetter-Drinfeld modules over a monoidal Hom-bialgebra $(H,\alpha)$, and then demonstrate that if $(H,\alpha)$ is a monoidal Hom-Hopf algebra with an invertible antipode it is a braided monoidal category and tensor equivalent to $^{H}_{H}\widetilde{\mathcal{H}}(\mathcal{M}_k)_H^H$.

 \begin{definition} Let $(H,\alpha)$ be a monoidal Hom-bialgebra, $(N,\nu)$ be a right $(H,\alpha)$-Hom-module with Hom-action $N\otimes H\to N,\; n\otimes h\mapsto n\lhd h$ and a right $(H,\alpha)$-Hom-comodule with Hom-coaction $N\to N\otimes H,\: n\mapsto n_{(0)}\otimes n_{(1)}$. Then $(N,\nu)$ is called a {\it right-right} $(H,\alpha)$-{\it Hom-Yetter-Drinfeld module} if the condition (\ref{Yetter-Drinfeld} ) holds, that is,
 \begin{equation*}n_{(0)}\lhd\alpha^{-1}(h_1)\otimes n_{(1)}\alpha^{-1}(h_2)=(n\lhd h_2)_{(0)}\otimes\alpha^{-1}(h_1(n\lhd h_2)_{(1)}),\end{equation*}
 for all $h\in H$ and $n\in N$.
 \end{definition}

 We denote by  $\widetilde{\mathcal{H}}(\mathcal{YD})_H^H$ the category of $(H,\alpha)$-Hom-Yetter-Drinfeld modules whose objects are Yetter-Drinfeld modules over the monoidal Hom-bialgebra $(H,\alpha)$ and morphisms are the ones that are right $(H,\alpha)$-linear and right $(H,\alpha)$-colinear.

\begin{proposition}\label{tensor-prod-Y-D-category} Let $(H,\alpha)$ be a monoidal Hom-bialgebra and $(M,\mu)$,$(N,\nu)$ be two $(H,\alpha)$-Hom-Yetter-Drinfeld modules. Then $(M\otimes N, \mu\otimes\nu)$ becomes a $(H,\alpha)$-Hom-Yetter-Drinfeld module with the following structure maps
\begin{equation}(M\otimes N)\otimes H\to M\otimes N,\:\: (m\otimes n)\otimes h\mapsto m\lhd h_1\otimes n\lhd h_2=(m\otimes n)\lhd h,\end{equation}
\begin{equation}M\otimes N\to (M\otimes N)\otimes H ,\:\: m\otimes n\mapsto (m_{(0)}\otimes n_{(0)})\otimes m_{(1)}n_{(1)}. \end{equation}
\end{proposition}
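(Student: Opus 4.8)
The plan is to verify the three defining requirements in turn: that $(M\otimes N,\mu\otimes\nu)$ is a right $(H,\alpha)$-Hom-module under the diagonal action, that it is a right $(H,\alpha)$-Hom-comodule under the codiagonal coaction, and finally that the twisted Yetter-Drinfel'd condition (\ref{Yetter-Drinfeld}) holds. The first two are routine and I would dispatch them first. For the module axioms, writing $(m\otimes n)\lhd h=m\lhd h_1\otimes n\lhd h_2$, the Hom-associativity $(\mu\otimes\nu)(m\otimes n)\lhd(hh')=((m\otimes n)\lhd h)\lhd\alpha(h')$ follows by applying multiplicativity $\Delta(hh')=\Delta(h)\Delta(h')$ to split the coproduct and then invoking the Hom-module axioms of $M$ and $N$ on each tensor factor; the weak unitality $(m\otimes n)\lhd 1_H=(\mu\otimes\nu)(m\otimes n)$ uses $\Delta(1_H)=1_H\otimes 1_H$, and that $\mu\otimes\nu$ is a morphism uses that $\mu,\nu$ are. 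Dually, for the comodule axioms I would verify Hom-coassociativity of $\rho(m\otimes n)=(m_{(0)}\otimes n_{(0)})\otimes m_{(1)}n_{(1)}$ by applying the Hom-coassociativity of $M$ and $N$ together with $\Delta(m_{(1)}n_{(1)})=\Delta(m_{(1)})\Delta(n_{(1)})$, and the Hom-counit property from $m_{(0)}\varepsilon(m_{(1)})=\mu^{-1}(m)$ on each factor.

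The core of the proof is the twisted Yetter-Drinfel'd condition, and this is where I would concentrate the effort. Expanding both sides of (\ref{Yetter-Drinfeld}) for $x=m\otimes n$, and using $\Delta\circ\alpha^{-1}=(\alpha^{-1}\otimes\alpha^{-1})\circ\Delta$ to resolve $\alpha^{-1}(h_1)$, the left-hand side becomes
$$m_{(0)}\lhd\alpha^{-1}(h_{11})\otimes n_{(0)}\lhd\alpha^{-1}(h_{12})\otimes (m_{(1)}n_{(1)})\alpha^{-1}(h_2),$$
while the right-hand side becomes
$$(m\lhd h_{21})_{(0)}\otimes (n\lhd h_{22})_{(0)}\otimes\alpha^{-1}\bigl(h_1\bigl((m\lhd h_{21})_{(1)}(n\lhd h_{22})_{(1)}\bigr)\bigr).$$
The strategy is to start from the right-hand side and apply the single-factor Yetter-Drinfel'd conditions of $N$ and then of $M$ to peel off the actions on $(n\lhd h_{22})$ and $(m\lhd h_{21})$ one at a time, each application trading a coaction-after-action for an action-after-coaction at the cost of producing a multiplication in the $H$-leg. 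Between these applications I would repeatedly use the Hom-coassociativity relation (\ref{Hom_coassociativity_Weak_counitality}) and the Hom-associativity (\ref{Hom-associativity_Weak_unitality}) in $H$ to re-bracket the products and relocate the $\alpha^{\pm1}$ twists so that the two displayed expressions can be compared leg by leg.

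The main obstacle will be precisely this bookkeeping of the twist automorphism $\alpha$: every time the coproduct is split or the Yetter-Drinfel'd condition of a factor is invoked, extra $\alpha^{\pm1}$ factors are introduced, and one must apply the Hom-(co)associativity identities in exactly the right order so that the distribution of the subindices of $h$ (i.e.\ $h_1,h_{11},h_{12},h_{21},h_{22},\dots$) and the placement of $\alpha$ on both sides agree. It is worth noting that this computation is the elementwise counterpart of the commutation-relation arguments already carried out in Section 3; in particular, the substitution $h\mapsto\alpha(k_2)$ device used in the proof of Proposition \ref{bicovariant-bimod-structure} to derive (\ref{Yetter-Drinfeld}) from the bicovariance identities indicates the correct pattern for reassociating the $H$-legs. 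Once the twists are matched, equality of the two expressions follows, establishing that $(M\otimes N,\mu\otimes\nu)$ is a Yetter-Drinfel'd $(H,\alpha)$-Hom-module.
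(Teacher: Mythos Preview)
Your proposal is correct and follows essentially the same route as the paper: the module and comodule axioms are dispatched by reference (the paper cites \cite{CaenepeelGoyvaerts}), and the heart of the proof is a direct verification of the twisted Yetter-Drinfel'd condition obtained by expanding the right-hand side, applying the single-factor conditions for $M$ and $N$ in turn, and rebracketing via Hom-(co)associativity to reach the left-hand side. The only cosmetic difference is the order in which the two factor conditions are invoked (the paper does $M$ first, then $N$), which is immaterial.
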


\begin{proof} $(M\otimes N, \mu\otimes\nu)$ is both right $(H,\alpha)$-Hom-module and a right $(H,\alpha)$-Hom-comodule; to verify this one can see Propositions 2.6 and 2.8 in \cite{CaenepeelGoyvaerts} for the left case. We only prove that the Hom-Yetter-Drinfeld condition is fulfilled for $(M\otimes N, \mu\otimes\nu)$: For $h\in H$, $m\in M$ and $n\in N$,

\begin{eqnarray*}\lefteqn{((m\otimes n)\lhd h_2)_{(0)}\otimes \alpha^{-1}(h_1)\alpha^{-1}(((m\otimes n)\lhd h_2)_{(1)})}\hspace{3em}\\
 &=&(m\lhd h_{21}\otimes n\lhd h_{22})_{(0)}\otimes \alpha^{-1}(h_1)\alpha^{-1}((m\lhd h_{21}\otimes n\lhd h_{22})_{(1)})\\
 &=&(m\lhd h_{12}\otimes n\lhd \alpha^{-1}(h_2))_{(0)}\otimes h_{11}\alpha^{-1}((m\lhd h_{12}\otimes n\lhd \alpha^{-1}(h_2))_{(1)})\\
 &=&(m\lhd h_{12})_{(0)}\otimes (n\lhd \alpha^{-1}(h_2))_{(0)}\otimes h_{11}\alpha^{-1}((m\lhd h_{12})_{(1)}(n\lhd \alpha^{-1}(h_2))_{(1)})\\
 &=&(m\lhd h_{12})_{(0)}\otimes (n\lhd \alpha^{-1}(h_2))_{(0)}\otimes h_{11}\alpha^{-1}((m\lhd h_{12})_{(1)})\alpha^{-1}((n\lhd \alpha^{-1}(h_2))_{(1)})\\
 &=&(m\lhd h_{12})_{(0)}\otimes (n\lhd \alpha^{-1}(h_2))_{(0)}\otimes(\alpha^{-1}( h_{11})\alpha^{-1}((m\lhd h_{12})_{(1)}))(n\lhd \alpha^{-1}(h_2))_{(1)}\\
 &=&m_{(0)}\lhd \alpha^{-1}( h_{11})\otimes(n\lhd \alpha^{-1}(h_2))_{(0)}\otimes( m_{(1)}\lhd \alpha^{-1}( h_{12}))(n\lhd \alpha^{-1}(h_2))_{(1)}\\
 &=&m_{(0)}\lhd \alpha^{-2}( h_1)\otimes (n\lhd h_{22})_{(0)}\otimes \alpha(m_{(1)})(\alpha^{-1}( h_{21})\alpha^{-1}((n\lhd h_{22})_{(1)}))\\
 &=&m_{(0)}\lhd \alpha^{-2}( h_1)\otimes n_{(0)}\lhd\alpha^{-1}( h_{21})\otimes\alpha(m_{(1)}(n_{(1)}\alpha^{-1}(h_{22}))\\
 &=&m_{(0)}\lhd \alpha^{-1}( h_{11})\otimes n_{(0)}\lhd\alpha^{-1}( h_{12})\otimes (m_{(1)}n_{(1)})\alpha^{-1}(h_2)\\
 &=&(m_{(0)}\otimes n_{(0)})\lhd \alpha^{-1}(h_1)\otimes (m_{(1)}n_{(1)})\alpha^{-1}(h_2)\\
 &=&(m\otimes n)_{(0)}\lhd\alpha^{-1}(h_1)\otimes(m\otimes n)_{(1)}\lhd\alpha^{-1}(h_2).
 \end{eqnarray*}
\end{proof}

\begin{proposition}\label{associator-Y-D-category}Let $(H,\alpha)$ be a monoidal Hom-bialgebra and $(M,\mu)$, $(N,\nu)$, $(P,\pi)$ be $(H,\alpha)$-Hom-Yetter-Drinfeld modules. Then the $k$-linear map $\tilde{a}_{M,N,P}:(M\otimes N)\otimes P\to M\otimes(N\otimes P), \tilde{a}_{M,N,P}((m\otimes n)\otimes p)=(\mu(m)\otimes(n\otimes \pi^{-1}(p))) $ is a right $(H,\alpha)$-linear and right $(H,\alpha)$-colinear isomorphism.
\end{proposition}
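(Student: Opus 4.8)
The plan is to exploit that the genuinely structural facts about $\tilde{a}_{M,N,P}$ are already available: it is nothing but the associativity constraint of the ambient monoidal category $\widetilde{\mathcal{H}}(\mathcal{M}_k)$, so its naturality, the Pentagon Axiom, and its bijectivity (with inverse $m\otimes(n\otimes p)\mapsto(\mu^{-1}(m)\otimes n)\otimes\pi(p)$) are inherited from Prop.~1.1 of \cite{CaenepeelGoyvaerts}. Consequently the only thing left to check is that $\tilde{a}_{M,N,P}$ is a morphism in $\widetilde{\mathcal{H}}(\mathcal{YD})_H^H$, i.e.\ that it is right $H$-linear and right $H$-colinear. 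This mirrors exactly the equivariance computation carried out for the left-covariant case in Proposition~\ref{associator_left-cov_Hom_bimod}, and once both equivariances are in place the map is automatically an isomorphism of Yetter-Drinfel'd Hom-modules.

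For right $H$-linearity I would expand both sides of $\tilde{a}_{M,N,P}(((m\otimes n)\otimes p)\lhd h)=\tilde{a}_{M,N,P}((m\otimes n)\otimes p)\lhd h$ using the diagonal action of Proposition~\ref{tensor-prod-Y-D-category}. The left side reduces to $\mu(m\lhd h_{11})\otimes\big((n\lhd h_{12})\otimes\pi^{-1}(p\lhd h_2)\big)$ and the right side to $\big(\mu(m)\lhd h_1\big)\otimes\big((n\lhd h_{21})\otimes(\pi^{-1}(p)\lhd h_{22})\big)$. I would then use that the action is a morphism in $\widetilde{\mathcal{H}}(\mathcal{M}_k)$, in the forms $\mu(m\lhd h_{11})=\mu(m)\lhd\alpha(h_{11})$ and $\pi^{-1}(p\lhd h_2)=\pi^{-1}(p)\lhd\alpha^{-1}(h_2)$, and finally reconcile the two expressions via the Hom-coassociativity identity $\alpha^{-1}(h_1)\otimes h_{21}\otimes h_{22}=h_{11}\otimes h_{12}\otimes\alpha^{-1}(h_2)$; applying $\alpha$ to its first leg gives $h_1\otimes h_{21}\otimes h_{22}=\alpha(h_{11})\otimes h_{12}\otimes\alpha^{-1}(h_2)$, which matches the three tensor legs exactly.

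For right $H$-colinearity the identity to establish is $\sigma^{Q'}\circ\tilde{a}_{M,N,P}=(\tilde{a}_{M,N,P}\otimes id_H)\circ\sigma^{Q}$, where $Q=(M\otimes N)\otimes P$, $Q'=M\otimes(N\otimes P)$, and $\sigma^{Q},\sigma^{Q'}$ are the codiagonal coactions of Proposition~\ref{tensor-prod-Y-D-category}. Evaluating $(\tilde{a}_{M,N,P}\otimes id_H)\circ\sigma^{Q}$ on $(m\otimes n)\otimes p$ yields $\big(\mu(m_{(0)})\otimes(n_{(0)}\otimes\pi^{-1}(p_{(0)}))\big)\otimes(m_{(1)}n_{(1)})p_{(1)}$, whereas $\sigma^{Q'}(\tilde{a}_{M,N,P}((m\otimes n)\otimes p))$, after applying the comodule-morphism relations $\rho(\mu(m))=\mu(m_{(0)})\otimes\alpha(m_{(1)})$ and $\rho(\pi^{-1}(p))=\pi^{-1}(p_{(0)})\otimes\alpha^{-1}(p_{(1)})$, gives the same first tensor factor tensored with $\alpha(m_{(1)})\big(n_{(1)}\alpha^{-1}(p_{(1)})\big)$. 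The two $H$-legs then agree by a single application of Hom-associativity $\alpha(a)(bc)=(ab)\alpha(c)$ with $a=m_{(1)}$, $b=n_{(1)}$, $c=\alpha^{-1}(p_{(1)})$. I expect the main obstacle to be purely bookkeeping: correctly tracking the $\alpha$-twists introduced by $\mu$ and $\pi^{-1}$ and by passing the action and coaction through the morphism conditions, and invoking Hom-coassociativity in precisely the reindexed form needed so that all three legs align. No input beyond Hom-(co)associativity and the (co)module-morphism axioms is required.
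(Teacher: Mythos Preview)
Your proposal is correct and follows essentially the same approach as the paper's own proof: both verify bijectivity via the explicit inverse, establish right $H$-linearity by expanding the diagonal action, applying the module-morphism identities $\mu(m\lhd h_{11})=\mu(m)\lhd\alpha(h_{11})$ and $\pi^{-1}(p\lhd h_2)=\pi^{-1}(p)\lhd\alpha^{-1}(h_2)$, and then reindexing via Hom-coassociativity, and establish right $H$-colinearity by computing both sides of $\sigma^{Q'}\circ\tilde{a}_{M,N,P}=(\tilde{a}_{M,N,P}\otimes id_H)\circ\sigma^{Q}$ down to $(\mu(m_{(0)})\otimes(n_{(0)}\otimes\pi^{-1}(p_{(0)})))\otimes(m_{(1)}n_{(1)})p_{(1)}$ using the comodule-morphism relations and Hom-associativity. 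The only cosmetic difference is that you invoke Prop.~1.1 of \cite{CaenepeelGoyvaerts} for bijectivity whereas the paper simply writes down the inverse.
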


\begin{proof} The bijectivity of $\tilde{a}_{M,N,P}$ is obvious with the inverse $\tilde{a}^{-1}_{M,N,P}(m\otimes (n\otimes p))=((\mu^{-1}(m)\otimes n)\otimes \pi(p))$.
\begin{eqnarray*}\tilde{a}_{M,N,P}(((m\otimes n)\otimes p)\lhd h)&=&\tilde{a}_{M,N,P}((m\lhd h_{11}\otimes n\lhd h_{12})\otimes p\lhd h_2)\\
&=&\mu(m\lhd h_{11})\otimes( n\lhd h_{12}\otimes \pi^{-1}(p\lhd h_2))\\
&=&\mu(m)\lhd \alpha(h_{11})\otimes( n\lhd h_{12}\otimes \pi^{-1}(p)\lhd \alpha^{-1}(h_2))\\
&=&\mu(m)\lhd h_1\otimes( n\lhd h_{21}\otimes \pi^{-1}(p)\lhd h_{22})\\
&=&\mu(m)\lhd h_1\otimes(( n\otimes \pi^{-1}(p))\lhd h_2)\\
&=&(\mu(m)\otimes(n\otimes \pi^{-1}(p)))\lhd h=\tilde{a}_{M,N,P}((m\otimes n)\otimes p)\lhd h,
\end{eqnarray*}
which proves the $(H,\alpha)$-linearity.
Below we show the $(H,\alpha)$-colinearity:
\begin{eqnarray*}\rho^{M\otimes(N\otimes P)}(\tilde{a}_{M,N,P}((m\otimes n)\otimes p))&=&\rho^{M\otimes(N\otimes P)}(\mu(m)\otimes(n\otimes \pi^{-1}(p)))\\
&=&(\mu(m)_{(0)}\otimes(n\otimes \pi^{-1}(p))_{(0)})\otimes\mu(m)_{(1)}(n\otimes \pi^{-1}(p))_{(1)}\\
&=&(\mu(m_{(0)})\otimes(n_{(0)}\otimes\pi^{-1}(p_{(0)})))\otimes\alpha(m_{(1)})(n_{(1)}\alpha^{-1}(p_{(1)}))\\
&=&(\mu(m_{(0)})\otimes(n_{(0)}\otimes\pi^{-1}(p_{(0)})))\otimes (m_{(1)}n_{(1)})p_{(1)},
\end{eqnarray*}

\begin{eqnarray*}\lefteqn{(\tilde{a}_{M,N,P}\otimes id_H)(\rho^{(M\otimes N)\otimes P}((m\otimes n)\otimes p))}\hspace{6em}\\
&=&(\tilde{a}_{M,N,P}\otimes id_H)(((m\otimes n)_{(0)}\otimes p_{(0)})\otimes(m\otimes n)_{(1)}p_{(1)})\\
&=&(\tilde{a}_{M,N,P}\otimes id_H)(((m_{(0)}\otimes n_{(0)})\otimes p_{(0)})\otimes(m_{(1)} n_{(1)})p_{(1)})\\
&=&(\mu(m_{(0)})\otimes (n_{(0)}\otimes \pi^{-1}(p_{(0)})))\otimes(m_{(1)} n_{(1)})p_{(1)},
\end{eqnarray*}
where $\rho^{Q}$ denotes the right $(H,\alpha)$-Hom-comodule structure of a  Hom-Yetter-Drinfeld module $Q$.
\end{proof}

\begin{proposition}\label{unitors-Y-D-category}Let $(H,\alpha)$ be a monoidal Hom-bialgebra and $(M,\mu)\in \widetilde{\mathcal{H}}(\mathcal{YD})_H^H$. Then the k-linear maps given by
\begin{equation}\tilde{l}_M:k\otimes M\to M,\: x\otimes m\mapsto x\mu(m), \end{equation}
\begin{equation}\tilde{r}_M:M\otimes k\to M,\: m\otimes x\mapsto x\mu(m) \end{equation}
are isomorphisms of right $(H,\alpha)$-Hom-modules and right $(H,\alpha)$-Hom-comodules.
\end{proposition}
\begin{proof} In the category $\mathcal{M}_k$ of $k$-modules, $k$ itself is the unit object; so one can easily show that $(k,id_k)$ is the unit object in $\widetilde{\mathcal{H}}(\mathcal{YD})_H^H$ with the trivial right Hom-action $k\otimes H \to k,\: x\otimes h\mapsto \varepsilon(h)x$ and the right Hom-coaction $k\to k\otimes H,\: x\mapsto x\otimes 1_H$ for any $x$ in $k$ and $h$ in $H$.
It is obvious that $\tilde{l}_M$ is a $k$-isomorphism with the inverse $\tilde{l}_M^{-1}:M \to k\otimes M, \: m\mapsto 1\otimes \mu^{-1}(m)$. It can easily be shown that the relation $\mu\circ\tilde{l}_M=\tilde{l}_M\circ(id_k\otimes\mu)$ holds. Now we prove the right $(H,\alpha)$-linearity and right $(H,\alpha)$-colinearity of $\tilde{l}_M$: For all $x\in k$, $h\in H$ and $m\in M$,

\begin{eqnarray*}\tilde{l}_M((x\otimes m)\lhd h)&=&\tilde{l}_M(\varepsilon(h_1)x\otimes m\lhd h_2)=\varepsilon(h_1)x \mu(m\lhd h_2)\\
&=&x\mu(m)\lhd \alpha(\varepsilon(h_1)h_2)=x\mu(m)\lhd \alpha(\alpha^{-1}(h))\\
&=&\tilde{l}_M(x\otimes m)\lhd h,
\end{eqnarray*}

\begin{eqnarray*}((\tilde{l}_M\otimes id_H)\circ\rho^{k\otimes M})(x\otimes m)&=&(\tilde{l}_M\otimes id_H)((x\otimes m_{(0)})\otimes 1_Hm_{(1)})\\
&=&x\mu(m_{(0)})\otimes \alpha(m_{(1)})\\
&=&x((\mu\otimes\alpha)\circ \rho^{M})(m)\\
&=&\rho^{M}(x\mu(m))\\
&=&(\rho^{M}\circ\tilde{l}_M)(x\otimes m).
\end{eqnarray*}
The same argument holds for $\tilde{r}$.
\end{proof}

\begin{proposition}\label{braiding-Y-D-category}Let $(H,\alpha)$ be a monoidal Hom-bialgebra and  $(M,\mu)$, $(N,\nu)$ be $(H,\alpha)$-Hom-Yetter-Drinfeld modules. Then the $k$-linear map
\begin{equation}c_{M,N}:M\otimes N\to N\otimes M,\: m\otimes n\mapsto \nu(n_{(0)})\otimes \mu^{-1}(m)\lhd n_{(1)}\end{equation}
is a right $(H,\alpha)$-linear and right $(H,\alpha)$-colinear morphism. In case $(H,\alpha)$ is a monoidal Hom-Hopf-algebra with an invertible antipode it is also a bijection.
\end{proposition}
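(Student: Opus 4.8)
The plan is to verify the three assertions in turn: right $H$-linearity, right $H$-colinearity, and—when the antipode is bijective—invertibility of $c_{M,N}$. Throughout, the decisive ingredient is the twisted Yetter-Drinfel'd condition (\ref{Yetter-Drinfeld}), supplemented by the right Hom-module and right Hom-comodule axioms, the Hom-(co)associativity and Hom-(co)unit relations, and the explicit diagonal action $(m\otimes n)\lhd h=m\lhd h_1\otimes n\lhd h_2$ together with the codiagonal coaction $m\otimes n\mapsto(m_{(0)}\otimes n_{(0)})\otimes m_{(1)}n_{(1)}$ furnished by Proposition (\ref{tensor-prod-Y-D-category}).

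For right $H$-linearity I would expand both sides of $c_{M,N}((m\otimes n)\lhd h)=c_{M,N}(m\otimes n)\lhd h$. On the left, the diagonal action gives $\nu((n\lhd h_2)_{(0)})\otimes\mu^{-1}(m\lhd h_1)\lhd(n\lhd h_2)_{(1)}$; on the right, the diagonal action on $N\otimes M$ yields $\nu(n_{(0)})\lhd h_1\otimes(\mu^{-1}(m)\lhd n_{(1)})\lhd h_2$. I would then rewrite the term $(n\lhd h_2)_{(0)}\otimes(n\lhd h_2)_{(1)}$ by applying (\ref{Yetter-Drinfeld}) with a suitably twisted argument—exactly the manoeuvre used in Proposition (\ref{bicovariant-bimod-structure}), where one substitutes $h=\alpha(k_2)$—and reconcile the two expressions using Hom-associativity of the $\lhd$-action and Hom-coassociativity of $\Delta$. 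The careful bookkeeping of the powers of $\alpha$, $\mu$, $\nu$ produced by the constraints $\tilde a,\tilde l,\tilde r$ is where the labour lies.

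For right $H$-colinearity I would compare $\rho^{N\otimes M}(c_{M,N}(m\otimes n))$ with $(c_{M,N}\otimes id_H)(\rho^{M\otimes N}(m\otimes n))$. Expanding the codiagonal coaction on $N\otimes M$ and using that $\rho$ is a morphism in $\widetilde{\mathcal{H}}(\mathcal{M}_k)$ (so that $\rho(\nu(n_{(0)}))=\nu(n_{(0)(0)})\otimes\alpha(n_{(0)(1)})$) reduces the left side to an expression involving $(\mu^{-1}(m)\lhd n_{(1)})_{(0)}$ and its $\cdot_{(1)}$-component; the right side, after expanding $\rho^{M\otimes N}(m\otimes n)=(m_{(0)}\otimes n_{(0)})\otimes m_{(1)}n_{(1)}$, involves $m_{(0)}\lhd n_{(0)(1)}$ and $m_{(1)}n_{(1)}$. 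Applying Hom-coassociativity to the comultiplication of $n_{(1)}$ and then invoking (\ref{Yetter-Drinfeld}) once more converts one side into the other, the Hom-counit relation closing the computation.

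Finally, assuming $S$ is invertible, I would exhibit an explicit two-sided inverse $c^{-1}_{N,M}:N\otimes M\to M\otimes N$ built from $S^{-1}$, of the shape $n\otimes m\mapsto\mu(m_{(0)})\lhd S^{-1}(n_{(1)})\otimes\nu^{-1}(n_{(0)})$ up to the appropriate powers of $\mu,\nu,\alpha$ dictated by the twists, and verify $c_{M,N}\circ c^{-1}_{N,M}=id$ and $c^{-1}_{N,M}\circ c_{M,N}=id$ using the antipode identities $S^{-1}(h_2)h_1=\varepsilon(h)1_H=h_2S^{-1}(h_1)$ together with Hom-coassociativity. I expect the principal obstacle to be pinning down the exact $\alpha$-, $\mu$-, $\nu$-normalisation in the inverse so that the antipode cancellations occur on the correctly twisted arguments; once this normalisation is found, the verification is a direct, if lengthy, computation entirely analogous to the braiding-inverse checks already carried out for bicovariant Hom-bimodules in Section 6.
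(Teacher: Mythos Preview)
Your approach is essentially the same as the paper's: right $H$-linearity and right $H$-colinearity are each verified by a direct computation in which the twisted Yetter--Drinfel'd condition (\ref{Yetter-Drinfeld}) is the key step, and bijectivity is established by writing down an explicit inverse built from $S^{-1}$ and checking both composites reduce to identities via the antipode relations and Hom-coassociativity.

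Two small points. First, the paper also records the preliminary check that $c_{M,N}$ is a morphism in $\widetilde{\mathcal{H}}(\mathcal{M}_k)$, i.e.\ $(\nu\otimes\mu)\circ c_{M,N}=c_{M,N}\circ(\mu\otimes\nu)$; you do not mention this, though it is immediate. Second, your proposed shape for the inverse, $n\otimes m\mapsto\mu(m_{(0)})\lhd S^{-1}(n_{(1)})\otimes\nu^{-1}(n_{(0)})$, is not quite right: there is no coaction on $m$ in the inverse (the subscript on $m$ is spurious), and the correct normalisation is
\[
c^{-1}_{M,N}(n\otimes m)=\mu^{-1}(m)\lhd S^{-1}(n_{(1)})\otimes\nu(n_{(0)}).
\]
You correctly anticipated that fixing the powers of $\mu,\nu$ is the delicate point; once this formula is in hand, the verification that $c_{M,N}\circ c^{-1}_{M,N}$ and $c^{-1}_{M,N}\circ c_{M,N}$ are identities proceeds exactly as you describe, using $h_2S^{-1}(h_1)=\varepsilon(h)1_H=S^{-1}(h_2)h_1$ together with Hom-coassociativity of the right coaction on $N$.
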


\begin{proof} We have the relation $(\nu\otimes\mu)\circ c_{M,N}=c_{M,N}\circ (\mu\otimes\nu)$ by the computation

\begin{eqnarray*}(\nu\otimes\mu)(c_{M,N}(m\otimes n))&=&(\nu\otimes\mu)(\nu(n_{(0)})\otimes \mu^{-1}(m)\lhd n_{(1)})\\
&=&\nu(\nu(n)_{(0)})\otimes m \lhd \alpha(n_{(1)})\\
&=&\nu(\nu(n)_{(0)})\otimes \mu^{-1}(\mu(m))\lhd \nu(n)_{(1)}\\
&=&c_{M,N}(\mu(m)\otimes \nu(n)).
\end{eqnarray*}
The $(H,\alpha)$-linearity holds as follows

\begin{eqnarray*}c_{M,N}((m\otimes n)\lhd h)&=&c_{M,N}(m\lhd h_1\otimes n\lhd h_2)\\
&=&\nu((n\lhd h_2)_{(0)})\otimes\mu^{-1}(m\lhd h_1)\lhd(n\lhd h_2)_{(1)}\\
&=&\nu((n\lhd h_2)_{(0)})\otimes(\mu^{-1}(m)\lhd\alpha^{-1}(h_1))\lhd(n\lhd h_2)_{(1)}\\
&=&\nu((n\lhd h_2)_{(0)})\otimes m\lhd(\alpha^{-1}(h_1)\alpha^{-1}((n\lhd h_2)_{(1)}))\\
&=&\nu(n_{(0)}\lhd \alpha^{-1}(h_1))\otimes m\lhd (n_{(1)}\alpha^{-1}(h_2))\\
&=&\nu(n_{(0)}\lhd h_1\otimes(\mu^{-1}(m)\lhd n_{(1)})\lhd h_2\\
&=&(\nu(n_{(0)})\otimes \mu^{-1}(m)\lhd n_{(1)})\lhd h\\
&=&c_{M,N}(m\otimes n)\lhd h,
\end{eqnarray*}
where in the fifth equality the twisted Yetter-Drinfeld condition has been used. We now show that $c_{M,N}$ is $(H,\alpha)$-colinear: In fact,

\begin{eqnarray*}(\rho^{N\otimes M}\circ c_{M,N})(m\otimes n)&=&\rho^{N\otimes M}(\nu(n_{(0)})\otimes \mu^{-1}(m)\lhd n_{(1)})\\
&=&(\nu(n_{(0)})_{(0)}\otimes(\mu^{-1}(m)\lhd n_{(1)})_{(0)})\otimes \nu(n_{(0)})_{(1)}(\mu^{-1}(m)\lhd n_{(1)})_{(1)}\\
&=&(\nu(n_{(0)(0)})\otimes\mu^{-1}((m\lhd\alpha( n_{(1)}))_{(0)}))\otimes \alpha(n_{(0)(1)})\alpha^{-1}((m\lhd \alpha(n_{(1)}))_{(1)})\\
&=&(n_{(0)}\otimes\mu^{-1}((m\lhd\alpha^{2}( n_{(1)2}))_{(0)}))\otimes \alpha(n_{(1)1})\alpha^{-1}((m\lhd \alpha^{2}(n_{(1)2}))_{(1)})\\
&=&(n_{(0)}\otimes\mu^{-1}(m_{(0)}\lhd \alpha^{-1}(\alpha^{2}(n_{(1)1}))))\otimes m_{(1)}\alpha^{-1}(\alpha^{2}(n_{(1)2}))\\
&=&(\nu(n_{(0)(0)})\otimes\mu^{-1}(m_{(0)}\lhd\alpha( n_{(0)(1)})))\otimes m_{(1)}n_{(1)}\\
&=&(c_{M,N}\otimes id_H)((m_{(0)}\otimes n_{(0)})\otimes m_{(1)}n_{(1)})\\
&=&(c_{M,N}\otimes id_H)(\rho^{M\otimes N}(m\otimes n)).
\end{eqnarray*}
Let us define $$c^{-1}_{M,N}:N\otimes M\to M\otimes N,\: n\otimes m\mapsto \mu^{-1}(m)\lhd S^{-1}(n_{(1)})\otimes \nu(n_{(0)}).$$
We verify that $c^{-1}_{M,N}$ is the inverse of $c_{M,N}$:
\begin{eqnarray*}c^{-1}_{M,N}(c_{M,N}(m\otimes n))&=&c^{-1}_{M,N}(\nu(n_{(0)})\otimes \mu^{-1}(m)\lhd n_{(1)})\\
&=&\mu^{-1}(\mu^{-1}(m)\lhd n_{(1)})\lhd S^{-1}(\nu(n_{(0)})_{(1)})\otimes \nu(\nu(n_{(0)})_{(0)})\\
&=&(\mu^{-2}(m)\lhd \alpha^{-1}(n_{(1)}))\lhd S^{-1}(\alpha(n_{(0)(1)}))\otimes \nu^{2}(n_{(0)(0)})\\
&=&\mu^{-1}(m)\lhd (\alpha^{-1}(n_{(1)})S^{-1}(n_{(0)(1)}))\otimes \nu^{2}(n_{(0)(0)})\\
&=&\mu^{-1}(m)\lhd (n_{(1)2}S^{-1}(n_{(1)1}))\otimes \nu(n_{(0)})\\
&=&\mu^{-1}(m)\lhd 1_H\otimes \nu(n_{(0)}\varepsilon(n_{(1)}))\\
&=&m\otimes n,
\end{eqnarray*}
and on the other hand we have
\begin{eqnarray*}c_{M,N}(c^{-1}_{M,N}(n\otimes m))&=&c_{M,N}(\mu^{-1}(m)\lhd S^{-1}(n_{(1)})\otimes \nu(n_{(0)}))\\
&=&\nu(\nu(n_{(0)})_{(0)})\otimes \mu^{-1}(\mu^{-1}(m)\lhd S^{-1}(n_{(1)}))\lhd \nu(n_{(0)})_{(1)}\\
&=&\nu^{2}(n_{(0)(0)})\otimes (\mu^{-2}(m)\lhd \alpha^{-1}(S^{-1}(n_{(1)})))\lhd \alpha(n_{(0)(1)})\\
&=&\nu(n_{(0)})\otimes \mu^{-1}(m)\lhd (S^{-1}(n_{(1)2})n_{(1)1})\\
&=&n\otimes m.
\end{eqnarray*}
\end{proof}

\begin{theorem}Let $(H,\alpha)$ be a monoidal Hom-bialgebra. Then $\widetilde{\mathcal{H}}(\mathcal{YD})^H_H$ is a prebraided monoidal category. It is a braided monoidal one under the requirement $(H,\alpha)$ be a monoidal Hom-Hopf algebra with a bijective antipode.
\end{theorem}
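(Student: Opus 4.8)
The plan is to build the monoidal structure first and then promote it to a (pre-)braided one by means of the map $c_{M,N}$ from Proposition \ref{braiding-Y-D-category}. Observe that the tensor product on $\widetilde{\mathcal{H}}(\mathcal{YD})^H_H$ is the tensor product over $k$ inherited from $\widetilde{\mathcal{H}}(\mathcal{M}_k)$, and the constraints $\tilde{a},\tilde{l},\tilde{r}$ are literally the same $k$-linear maps that already serve as constraints in $\widetilde{\mathcal{H}}(\mathcal{M}_k)$. Consequently the Pentagon Axiom and the Triangle Axiom are inherited for free: they hold in $\widetilde{\mathcal{H}}(\mathcal{M}_k)$ by the monoidal structure recalled in Section 2, and they are insensitive to passing to a subcategory. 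Thus the only points needed for monoidality are that $\otimes$ restricts to a bifunctor on $\widetilde{\mathcal{H}}(\mathcal{YD})^H_H$ and that the three constraints are themselves morphisms of Yetter--Drinfel'd Hom-modules. The former is a short direct check that $f\otimes g$ is $H$-linear and $H$-colinear whenever $f,g$ are, using the diagonal action and codiagonal coaction of Proposition \ref{tensor-prod-Y-D-category}; the latter is exactly the content of Propositions \ref{tensor-prod-Y-D-category}, \ref{associator-Y-D-category} and of the Proposition on the unit constraints $\tilde{l}_M,\tilde{r}_M$ stated above. Hence $(\widetilde{\mathcal{H}}(\mathcal{YD})^H_H,\otimes,(k,id_k),\tilde{a},\tilde{l},\tilde{r})$ is a monoidal category.

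Next I would verify that $c$ is a prebraiding. By Proposition \ref{braiding-Y-D-category} each $c_{M,N}$ is already a morphism in $\widetilde{\mathcal{H}}(\mathcal{YD})^H_H$, so it remains to establish the naturality of $c$ in both variables and the two Hexagon Axioms. Naturality is immediate from the explicit formula $c_{M,N}(m\otimes n)=\nu(n_{(0)})\otimes\mu^{-1}(m)\lhd n_{(1)}$: for morphisms $f\colon(M,\mu)\to(M',\mu')$ and $g\colon(N,\nu)\to(N',\nu')$, both $H$-linear and $H$-colinear, one pushes $g$ through the coaction $(\cdot)_{(0)}\otimes(\cdot)_{(1)}$ using its $H$-colinearity and pushes $f$ through the action $\lhd$ using its $H$-linearity, so that $(g\otimes f)\circ c_{M,N}=c_{M',N'}\circ(f\otimes g)$.

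The heart of the argument is the pair of Hexagon Axioms, the first of which reads
\[\tilde{a}_{N,P,M}\circ c_{M,N\otimes P}\circ \tilde{a}_{M,N,P}=(id_N\otimes c_{M,P})\circ\tilde{a}_{N,M,P}\circ (c_{M,N}\otimes id_P)\]
with a mirror relation for the second. Because the tensor product is here taken over $k$ rather than over $H$, there is no coequalizer to contend with, so one may test each identity directly on an arbitrary simple tensor $(m\otimes n)\otimes p$, instead of on the special generators that were needed in the bicovariant case of Section 6. I would expand both sides using $c_{M,N}(m\otimes n)=\nu(n_{(0)})\otimes\mu^{-1}(m)\lhd n_{(1)}$, the associator $\tilde{a}_{M,N,P}((m\otimes n)\otimes p)=\mu(m)\otimes(n\otimes\pi^{-1}(p))$, the codiagonal coaction $m\otimes n\mapsto(m_{(0)}\otimes n_{(0)})\otimes m_{(1)}n_{(1)}$ and the diagonal action $(m\otimes n)\lhd h=m\lhd h_1\otimes n\lhd h_2$ of Proposition \ref{tensor-prod-Y-D-category}, and then reconcile the two sides with the Hom-module and Hom-comodule axioms (weak unitality and counitality, the Hom-associativity $\mu(m)\lhd(hg)=(m\lhd h)\lhd\alpha(g)$ and its comodule dual) together with the multiplicativity of $\Delta$.

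The main obstacle will be the bookkeeping of the twisting automorphism $\alpha$ and of the object automorphisms $\mu,\nu,\pi$: every application of $c$, of $\tilde{a}$ and of the (co)actions inserts powers of these maps, and matching the resulting $\alpha^{\pm k}$-, $\mu^{\pm k}$-, $\nu^{\pm k}$- and $\pi^{\pm k}$-decorations on the two sides of each hexagon is precisely where the care is required; the underlying structural identities are routine once the twists are aligned. Finally, under the hypothesis that $(H,\alpha)$ is a monoidal Hom-Hopf algebra with bijective antipode, Proposition \ref{braiding-Y-D-category} supplies the two-sided inverse $c^{-1}_{M,N}(n\otimes m)=\mu^{-1}(m)\lhd S^{-1}(n_{(1)})\otimes\nu(n_{(0)})$, so each $c_{M,N}$ is an isomorphism and the prebraiding becomes a genuine braiding, upgrading the category to a braided monoidal one.
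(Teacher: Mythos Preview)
Your proposal is correct and follows essentially the same route as the paper: after assembling the monoidal structure from Propositions \ref{tensor-prod-Y-D-category}, \ref{associator-Y-D-category} and the unit-constraint proposition, the paper likewise verifies both Hexagon Axioms by a direct element-wise computation on an arbitrary simple tensor $(m\otimes n)\otimes p$, and then invokes the inverse $c^{-1}_{M,N}$ from Proposition \ref{braiding-Y-D-category} for the braided case. Your additional remarks on inheriting the Pentagon and Triangle Axioms from $\widetilde{\mathcal{H}}(\mathcal{M}_k)$ and on the naturality of $c$ are correct refinements that the paper leaves implicit.
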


\begin{proof} The definition of tensor product is given in Proposition (\ref{tensor-prod-Y-D-category}),
the associativity constraint is described in Proposition (\ref{associator-Y-D-category}), left and right unitors are given in Proposition (\ref{unitors-Y-D-category}) and the (pre-)braiding is defined in Proposition (\ref{braiding-Y-D-category}).
The Hexagon Axiom for $c$ are left to be verified to finish the proof.

Let $(M,\mu)$,$(N,\nu)$, $(P,\pi)$ be in $\widetilde{\mathcal{H}}(\mathcal{YD})^H_H$; we show that the first hexagon axiom holds for $c$:

\begin{eqnarray*}\lefteqn{((id_N\otimes c_{M,P})\circ\tilde{a}_{N,M,P}\circ(c_{M,N}\otimes id_P))((m\otimes n)\otimes p)}\hspace{4em}\\
&=&((id_N\otimes c_{M,P})\circ\tilde{a}_{N,M,P})((\nu(n_{(0)})\otimes \mu^{-1}(m)\lhd n_{(1)})\otimes p)\\
&=&(id_N\otimes c_{M,P})(\nu^{2}(n_{(0)})\otimes (\mu^{-1}(m)\lhd n_{(1)}\otimes\pi^{-1}(p)))\\
&=&\nu^{2}(n_{(0)})\otimes(\pi(\pi^{-1}(p)_{(0)})\otimes \mu^{-1}(\mu^{-1}(m)\lhd n_{(1)})\lhd\pi^{-1}(p)_{(1)})\\
&=&\nu^{2}(n_{(0)})\otimes(p_{(0)}\otimes (\mu^{-2}(m)\lhd\alpha^{-1}(n_{(1)}))\lhd \alpha^{-1}(p_{(1)}))\\
&=&\nu^{2}(n_{(0)})\otimes(p_{(0)}\otimes \mu^{-1}(m)\lhd(\alpha^{-1}(n_{(1)})\alpha^{-2}(p_{(1)})))\\
&=&\nu^{2}(n_{(0)})\otimes (p_{(0)}\otimes \mu^{-1}(m \lhd (n_{(1)}\alpha^{-1}(p_{(1)}))))\\
&=&\tilde{a}_{N,P,M}((\nu(n_{(0)})\otimes p_{(0)})\otimes m \lhd (n_{(1)}\alpha^{-1}(p_{(1)})))\\
&=&\tilde{a}_{N,P,M}((\nu\otimes\pi)((n\otimes \pi^{-1}(p))_{(0)})\otimes \mu^{-1}(\mu(m))\lhd (n\otimes \pi^{-1}(p))_{(1)})\\
&=&(\tilde{a}_{N,P,M}\circ c_{M,N\otimes P})(\mu(m)\otimes(n\otimes \pi^{-1}(p)))\\
&=&(\tilde{a}_{N,P,M}\circ c_{M,N\otimes P}\circ\tilde{a}_{M,N,P})((m\otimes n)\otimes p).
\end{eqnarray*}

Lastly, we prove the second hexagon axiom:

\begin{eqnarray*}\lefteqn{\tilde{a}^{-1}_{P,M,N}\circ c_{M\otimes N,P}\circ\tilde{a}^{-1}_{M,N,P}(m\otimes(n\otimes p))}\hspace{4em}\\
&=&(\tilde{a}^{-1}_{P,M,N}\circ c_{M\otimes N,P})((\mu^{-1}(m)\otimes n)\otimes \pi(p))\\
&=&\tilde{a}^{-1}_{P,M,N}(\pi(\pi(p)_{(0)})\otimes ((\mu^{-1}\otimes\nu^{-1})(\mu^{-1}(m)\otimes n)\lhd\pi(p)_{(1)}))\\
&=&\tilde{a}^{-1}_{P,M,N}(\pi^{2}(p_{(0)})\otimes(\mu^{-2}(m)\otimes \nu^{-1}(n))\lhd \alpha(p_{(1)}))\\
&=&\tilde{a}^{-1}_{P,M,N}(\pi^{2}(p_{(0)})\otimes(\mu^{-2}(m)\lhd\alpha(p_{(1)})_1\otimes \nu^{-1}(n)\lhd\alpha(p_{(1)})_2))\\
&=&\tilde{a}^{-1}_{P,M,N}(\pi^{3}(p_{(0)(0)})\otimes (\mu^{-2}(m)\lhd\alpha(p_{(0)(1)})\otimes \nu^{-1}(n)\lhd p_{(1)}))\\
&=&(\pi^{2}(p_{(0)(0)})\otimes\mu^{-2}(m)\lhd\alpha(p_{(0)(1)}))\otimes n\lhd p_{(1)}\\
&=&(\pi(\pi(p_{(0)})_{(0)})\otimes \mu^{-1}(\mu^{-1}(m))\lhd\pi(p_{(0)})_{(1)})\otimes n\lhd \alpha(p_{(1)}))\\
&=&(c_{M,P}\otimes id_N)((\mu^{-1}(m)\otimes \pi(p_{(0)}))\otimes n\lhd \alpha(p_{(1)}))\\
&=&(c_{M,P}\otimes id_N)((\mu^{-1}(m)\otimes \pi(p_{(0)}))\otimes \nu(\nu^{-1}(n)\lhd p_{(1)}))\\
&=&((c_{M,P}\otimes id_N)\circ\tilde{a}^{-1}_{M,P,N})(m\otimes(\pi(p_{(0)})\otimes \nu^{-1}(n)\lhd p_{(1)}))\\
&=&((c_{M,P}\otimes id_N)\circ\tilde{a}^{-1}_{M,P,N}\circ (id_M\otimes c_{N,P}))(m\otimes (n\otimes p)).
\end{eqnarray*}

\end{proof}

Together with Theorem \ref{one-to-one-LeftCov-RightMod} and Theorem \ref{one-to-one-Bicov-YD}, Theorem (\ref{inverse-functors}) provides:

\begin{theorem}Let $(H,\alpha)$ be a monoidal Hom-Hopf algebra. Then the equivalences in (\ref{inverse-functors}):
$$F=(H\otimes -,\alpha\otimes -):\widetilde{\mathcal{H}}(\mathcal{M}_k) \to\: _{H}^{H}\widetilde{\mathcal{H}}(\mathcal{M}_k),$$
$$G=\: ^{coH}(-):\: _{H}^{H}\widetilde{\mathcal{H}}(\mathcal{M}_k) \to \widetilde{\mathcal{H}}(\mathcal{M}_k),$$
 induce tensor equivalences between
\begin{enumerate}
   \item \label{tens-equivalence1}the category of right $(H,\alpha)$-Hom-modules and the category of left-covariant $(H,\alpha)$-Hom-bimodules,
   \item\label{tens-equivalence2} the category of right-right $(H,\alpha)$-Hom-Yetter-Drinfeld modules and the category of bicovariant $(H,\alpha)$-Hom-bimodules.
 \end{enumerate}
\end{theorem}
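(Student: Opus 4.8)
The plan is to show that the inverse equivalences $F$ and $G$ of Theorem \ref{inverse-functors} restrict to the subcategories in question and that one of them carries a strong (braided) monoidal structure; the conclusion for the other then follows, since a strong monoidal functor which is an equivalence admits a canonically monoidal quasi-inverse. First I would record that $F=(H\otimes-,\alpha\otimes-)$ lands in the right place: by Proposition \ref{left-cov-structure} a right $(H,\alpha)$-Hom-module $(V,\nu)$ is sent to the left-covariant Hom-bimodule $(H\otimes V,\alpha\otimes\nu)$, and by Proposition \ref{bicovariant-bimod-structure} a right-right Yetter-Drinfel'd Hom-module is sent to a bicovariant Hom-bimodule, while $G={}^{coH}(-)$ sends these bimodules back with the induced module (resp. module-comodule) structures. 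That the restricted functors are mutually inverse equivalences is exactly the content of Remarks \ref{one-to-one-LeftCov-RightMod} and \ref{one-to-one-Bicov-YD}, i.e. of the fundamental Theorems \ref{fundamental-thm-left-cov-bimod} and \ref{fundamental-thm-bicov-bimod}. Hence only compatibility with the tensor (and braided) structures remains.

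The heart of the argument is a natural isomorphism witnessing that $F$ is monoidal, which I prefer to exhibit on the $G$-side as $\kappa_{M,N}\colon {}^{coH}M\otimes{}^{coH}N\to{}^{coH}(M\otimes_H N),\ u\otimes w\mapsto u\otimes_H w$ (equivalently, a map $\xi_{V,W}\colon F(V)\otimes_H F(W)\to F(V\otimes W)$). A short computation shows that $u\otimes_H w$ is a left Hom-coinvariant whenever $u$ and $w$ are, using the coaction (\ref{left-coaction-left-cov-bimod}), so $\kappa$ is well defined; it is bijective because, by Theorem \ref{fundamental-thm-left-cov-bimod}, $M\otimes_H N$ is the free left Hom-module on its coinvariants, which are spanned precisely by the elements $u\otimes_H w$. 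I would then verify that $\kappa$ is a morphism in the target category: right $H$-linear for the diagonal action $u\lhd h=\widetilde{ad}_R(h)u$ of (\ref{right_action_on_coinvariants}) and left $H$-colinear, and in the bicovariant case additionally right $H$-colinear for the restricted right coaction (\ref{right-coaction-right-cov-bimod}). These are direct calculations with the explicit structure maps (\ref{left-Hom-action})--(\ref{left-Hom-coaction}).

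With $\kappa$ in hand I would check the monoidal coherence, namely that $\kappa$ intertwines the associativity constraint (\ref{associator-formula}) on the bimodule side with the constraint of Proposition \ref{associator-Y-D-category} on the module side, and similarly for the unit constraints; this makes $(G,\kappa)$ a strong monoidal functor. Being simultaneously an equivalence of categories, it is a monoidal equivalence and its quasi-inverse $F$ inherits a monoidal structure. This already settles assertion (\ref{tens-equivalence1}) and the monoidal half of (\ref{tens-equivalence2}).

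The remaining and most delicate point, which I expect to be the main obstacle, is the braiding. I would show that $\kappa$ transports the braiding $c$ of (\ref{braiding_bicov_Hom_bimod}) on $^{H}_{H}\widetilde{\mathcal{H}}(\mathcal{M}_k)_H^H$ to the Yetter-Drinfel'd braiding of Proposition \ref{braiding-Y-D-category}. Since $M\otimes_H N$ is spanned by the elements $hu\otimes_H v$ with $u\in{}^{coH}M$, $v\in{}^{coH}N$, and since both $c_{M,N}$ (by Proposition \ref{quasi_braiding_bicov_Hom_bimod}) and the transported braiding are left $H$-linear, it suffices to compare them on coinvariant tensors. Here the bookkeeping is essentially already done: the simplified form (\ref{first_form_of_baraiding}) gives $c_{M,N}(hu\otimes_H v)=hv_{[0]}\otimes_H u\lhd v_{[1]}$, whose restriction to $h=1_H$ reads $\nu(v_{(0)})\otimes_H \mu^{-1}(u)\lhd v_{(1)}$ once one identifies, via Theorem \ref{fundamental-thm-bicov-bimod}, the right coaction on ${}^{coH}N$ with the Yetter-Drinfel'd comodule structure; this matches $c_{V,W}(u\otimes v)=\nu(v_{(0)})\otimes\mu^{-1}(u)\lhd v_{(1)}$. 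Carefully reconciling the canonical $\alpha,\mu,\nu$-twists and confirming naturality is the one genuinely computational step; once it is carried out, $F$ (hence $G$) is a braided monoidal functor, and by Proposition \ref{quasi_braiding_bicov_Hom_bimod} together with the invertibility of $S$ both categories are braided, yielding the braided monoidal equivalence asserted in (\ref{tens-equivalence2}).
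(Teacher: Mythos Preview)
Your approach is correct but proceeds dually to the paper's. The paper equips $F$ with an explicit monoidal structure
\[
\varphi_2(V,W):(H\otimes V)\otimes_H(H\otimes W)\to H\otimes(V\otimes W),\quad (g\otimes v)\otimes_H(h\otimes w)\mapsto g\alpha(h_1)\otimes(\mu^{-1}(v)\lhd h_2\otimes w),
\]
exhibits its inverse, verifies the hexagonal coherence for $\varphi_2$ directly, and then checks that $\varphi_2$ is a morphism in $^{H}_{H}\widetilde{\mathcal{H}}(\mathcal{M}_k)_H$ (left $H$-linear, left $H$-colinear, right $H$-linear) and additionally right $H$-colinear in the bicovariant case. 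You instead propose to put the monoidal structure on $G$ via $\kappa_{M,N}(u\otimes w)=u\otimes_H w$ and transfer it to $F$.

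Both routes work, and under the identification ${}^{coH}(H\otimes V)\cong V$ your $\kappa$ is precisely the map induced by $\varphi_2$ on coinvariants. What the paper's choice buys is that the two steps you leave as ``direct calculations'' become immediate once $\varphi_2$ is written down: the bijectivity of $\kappa$ (which you justify by saying the coinvariants of $M\otimes_H N$ are spanned by the $u\otimes_H w$) is exactly the statement that $\varphi_2$ is an isomorphism, and the right $H$-linearity of $\kappa$ (i.e.\ that $(u\lhd h_1)\otimes_H(w\lhd h_2)$ equals $\widetilde{ad}_R(h)(u\otimes_H w)$ in ${}^{coH}(M\otimes_H N)$) is a consequence of the explicit right $H$-linearity computation for $\varphi_2$. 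On your side these are genuine computations that still have to be carried out; they are not hard, but your sketch understates them.

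One organizational remark: the compatibility with the braiding that you fold into this argument is, in the paper, separated off into the subsequent Corollary, where it is checked via the identity $\varphi_2(W,V)\circ c_{M,N}\circ\varphi_2(V,W)^{-1}=id_H\otimes c_{V,W}$ using the explicit formula for $c$ from (\ref{braiding_bicov_Hom_bimod}). Your reduction to coinvariant tensors via (\ref{first_form_of_baraiding}) is the same idea and is fine.
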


\begin{proof} The right $(H,\alpha)$-Hom-module structure on $(H\otimes V,\alpha\otimes\mu)$ for a right $(H,\alpha)$-Hom-module $(V,\mu)$ is given in Proposition (\ref{left-cov-structure}) and the right $(H,\alpha)$-Hom-comodule structure on $(H\otimes W,\alpha\otimes\nu)$ for a right $(H,\alpha)$-Hom-comodule $(W,\nu)$ is given in Proposition (\ref{bicovariant-bimod-structure}). It remains only to prove that one of the inverse equivalences, say $F$ together with $\varphi_2(V,W):(H\otimes V)\otimes_H(H\otimes W)\to H\otimes(V\otimes W)$ given by
\begin{equation*}\varphi_2(V,W)((g\otimes v)\otimes_H(h\otimes w))=g\alpha(h_1)\otimes(\mu^{-1}(v)\lhd h_2\otimes w) \end{equation*}
for all $g,h \in H$, $v\in V$, $w\in W$, is a tensor functor in each case. Define
$$\varphi_2(V,W)^{-1}:H\otimes(V\otimes W)\to (H\otimes V)\otimes_H(H\otimes W),\:h\otimes(v\otimes w)\mapsto (\alpha^{-1}(h)\otimes v)\otimes_H(1_H\otimes w),$$ which is an inverse of $\varphi_2(V,W)$: For $h,g\in H$, $v\in V$ and $w\in W$,

\begin{eqnarray*}\varphi_2(V,W)(\varphi_2(V,W)^{-1}(h\otimes(v\otimes w)))&=&\varphi_2(V,W)((\alpha^{-1}(h)\otimes v)\otimes_H(1_H\otimes w))\\
&=&\alpha^{-1}(h)1_H\otimes(\mu^{-1}(m)\lhd 1_H\otimes w)\\
&=&h\otimes(v\otimes w),
\end{eqnarray*}

\begin{eqnarray*}\varphi_2(V,W)^{-1}(\varphi_2(V,W)((g\otimes v)\otimes_H(h\otimes w)))&=&\varphi_2(V,W)^{-1}(g\alpha(h_1)\otimes(\mu^{-1}(v)\lhd h_2\otimes w))\\
&=&(\alpha^{-1}(g\alpha(h_1))\otimes \mu^{-1}(v)\lhd h_2)\otimes_H (1_H\otimes w)\\
&=&(\alpha^{-1}(g)\otimes \mu^{-1}(v))h\otimes_H (1_H\otimes w)\\
&=&(g\otimes v)\otimes_H h(1_H\otimes \nu^{-1}(w))\\
&=&(g\otimes v)\otimes_H (h\otimes w),
\end{eqnarray*}
and one can also show that the relation $(\alpha\otimes(\mu\otimes\nu))\circ\varphi_2(V,W)=\varphi_2(V,W)\circ((\alpha\otimes\mu)\otimes_H(\alpha\otimes\nu))$ holds.
We now verify that the coherence condition on $F$ is fulfilled:

\begin{eqnarray*}\lefteqn{(\varphi_2(U,V\otimes W)\circ(id\otimes\varphi_2(V,W))\circ\tilde{a}_Q)(((g\otimes u)\otimes_H (h\otimes v))\otimes_H(k\otimes w))}\hspace{1em}\\
&=&(\varphi_2(U,V\otimes W)\circ(id\otimes\varphi_2(V,W)))((\alpha(g)\otimes\mu(u))\otimes_H((h\otimes v)\otimes_H(\alpha^{-1}(k)\otimes \pi^{-1}(w))))\\
&=&\varphi_2(U,V\otimes W)((\alpha(g)\otimes\mu(u))\otimes_H(hk_1\otimes(\nu^{-1}(v)\lhd\alpha^{-1}(k_2)\otimes\pi^{-1}(w) )))\\
&=&\alpha(g)\alpha(h_1k_{11})\otimes(u\lhd h_2k_{12}\otimes(\nu^{-1}(v)\lhd\alpha^{-1}(k_2)\otimes\pi^{-1}(w) ))\\
&=&\alpha(g)(\alpha(h_1)k_1)\otimes(u\lhd h_2k_{21}\otimes(\nu^{-1}(v)\lhd k_{22}\otimes\pi^{-1}(w) ))\\
&=&(id\otimes \tilde{a}_{Q'})(\alpha(g)(\alpha(h_1)k_1)\otimes((\mu^{-1}(u)\lhd \alpha^{-1}(h_2k_{21})\otimes\nu^{-1}(v)\lhd k_{22})\otimes w))\\
&=&(id\otimes\tilde{a}_{Q'})(\alpha(g)(\alpha(h_1)k_1)\otimes(((\mu^{-2}(u)\lhd \alpha^{-1}(h_2))\lhd k_{21}\otimes\nu^{-1}(v)\lhd k_{22})\otimes w))\\
&=&(id\otimes\tilde{a}_{Q'})((g\alpha(h_1))\alpha(k_1)\otimes((\mu^{-2}(u)\lhd \alpha^{-1}(h_2)\otimes\nu^{-1}(v))\lhd k_2\otimes w))\\
&=&((id\otimes\tilde{a}_{Q'})\circ\varphi_2(U\otimes V,W))((g\alpha(h_1)\otimes(\mu^{-1}\lhd h_2\otimes v))\otimes_H(k\otimes w))\\
&=&((id\otimes\tilde{a}_{Q'})\circ\varphi_2(U\otimes V,W)\circ(\varphi_2(U,V)\otimes id))(((g\otimes u)\otimes_H (h\otimes v))\otimes_H(k\otimes w)).
\end{eqnarray*}

For (\ref{tens-equivalence1}) we verify that the $k$-isomorphism $\varphi_2(V,W)$ is a morphism of left-covariant $(H,\alpha)$-Hom-bimodules, that is, we prove its left $(H,\alpha)$-linearity, $(H,\alpha)$-colinearity, and right $(H,\alpha)$-linearity, respectively:

\begin{eqnarray*}\varphi_2(V,W)(k((g\otimes v)\otimes_H(h\otimes w)))&=&\varphi_2(V,W)(\alpha^{-1}(k)(g\otimes v)\otimes_H(\alpha(h)\otimes \nu(w))))\\
&=&\varphi_2(V,W)((\alpha^{-2}(k)g\otimes \mu(v))\otimes_H(\alpha(h)\otimes \nu(w)))\\
&=&(\alpha^{-2}(k)g)\alpha^{2}(h_1)\otimes (v\lhd \alpha(h_2)\otimes \nu(w))\\
&=&\alpha^{-1}(k)(g\alpha(h_1))\otimes ((\mu\otimes\nu)(\mu^{-1}(v)\lhd h_2\otimes w))\\
&=&k(g\alpha(h_1)\otimes(\mu^{-1}(v)\lhd h_2\otimes w))\\
&=&k\varphi_2(V,W)((g\otimes v)\otimes_H(h\otimes w)),
\end{eqnarray*}

\begin{eqnarray*}\lefteqn{(id \otimes\varphi_2(V,W))(^{Q}\rho((g\otimes v)\otimes_H(h\otimes w)))}\hspace{6em}\\
&=&(id \otimes\varphi_2(V,W))(\alpha(g_1)\alpha(h_1)\otimes((g_2\otimes\mu^{-1}(v))\otimes_H(h_2\otimes \nu^{-1}(w))))\\
&=&\alpha(g_1)\alpha(h_1)\otimes(g_2\alpha(h_{21})\otimes (\mu^{-2}(v)\lhd h_{22}\otimes \nu^{-1}(w)))\\
&=&\alpha(g_1)\alpha^{2}(h_{11})\otimes(g_2\alpha(h_{12})\otimes (\mu^{-2}(v)\lhd \alpha^{-1}(h_2)\otimes \nu^{-1}(w)))\\
&=&\alpha((g\alpha(h_1))_1)\otimes ((g\alpha(h_1))_2\otimes (\mu^{-1}(\mu^{-1}(v)\lhd h_2)\nu^{-1}(w)))\\
&=&^{Q'}\rho(g\alpha(h_1)\otimes (\mu^{-1}(v)\lhd h_2\otimes w))\\
&=&^{Q'}\rho(\varphi_2(V,W)((g\otimes v)\otimes_H(h\otimes w))),
\end{eqnarray*}

\begin{eqnarray*}\lefteqn{\varphi_2(V,W)(((g\otimes v)\otimes_H(h\otimes w))k)}\hspace{3em}\\
&=&\varphi_2(V,W)((\alpha(g)\otimes\mu(v))\otimes_H(h\otimes w)\alpha^{-1}(k))\\
&=&\varphi_2(V,W)((\alpha(g)\otimes\mu(v))\otimes_H(h\alpha^{-1}(k_1)\otimes w\lhd\alpha^{-1}(k_2)))\\
&=&\alpha(g)(\alpha(h_1)k_{11})\otimes(v\lhd(h_2\alpha^{-1}(k_{12}))\otimes w\lhd\alpha^{-1}(k_2))\\
&=&\alpha(g)(\alpha(h_1)\alpha^{-1}(k_1))\otimes(v\lhd(h_2\alpha^{-1}(k_{21}))\otimes w\lhd k_{22})\\
&=&\alpha(g)(\alpha(h_1)\alpha^{-1}(k_1))\otimes(\mu(\mu^{-1}(v))\lhd(h_2\alpha^{-1}(k_{21}))\otimes w\lhd k_{22})\\
&=&(g\alpha(h_1))k_1\otimes ((\mu^{-1}(v)\lhd h_2)\lhd k_{21}\otimes w\lhd k_{22})\\
&=&(g\alpha(h_1))k_1\otimes (\mu^{-1}(v)\lhd h_2\otimes w)k_2\\
&=&\varphi_2(V,W)((g\otimes v)\otimes_H(h\otimes w))k.
\end{eqnarray*}

For (\ref{tens-equivalence2}) we need only to check that $\varphi_2(V,W)$ is right $(H,\alpha)$-colinear. Let us denote by $\sigma^{Q'}$ and $\sigma^{Q}$ the right $(H,\alpha)$-Hom-comodule structures on $Q'=H\otimes(V\otimes W)$ and $Q=(H\otimes V)\otimes_H(H\otimes W)$. Then

\begin{eqnarray*}\lefteqn{\sigma^{Q'}(\varphi_2(V,W)((g\otimes v)\otimes_H(h\otimes w)))}\hspace{2em}\\
&=&\sigma^{Q'}(g\alpha(h_1)\otimes(\mu^{-1}(v)\lhd h_2\otimes w))\\
&=&((g\alpha(h_1))_1\otimes ((\mu^{-1}(v)\lhd h_2)_{[0]}\otimes w_{[0]}))\otimes (g\alpha(h_1))_2((\mu^{-1}(v)\lhd h_2)_{[1]}w_{[1]})\\
&=&(g_1\alpha(h_{11})\otimes ((\mu^{-1}(v)\lhd h_2)_{[0]}\otimes w_{[0]}))\otimes \alpha(g_2)(\alpha(h_{12})(\alpha^{-1}((\mu^{-1}(v)\lhd h_2)_{[1]})\alpha^{-1}(w_{[1]})))\\
&=&(g_1\alpha(h_{11})\otimes ((\mu^{-1}(v)\lhd h_2)_{[0]}\otimes w_{[0]}))\otimes \alpha(g_2)((h_{12}\alpha^{-1}((\mu^{-1}(v)\lhd h_2)_{[1]}))w_{[1]})\\
&=&(g_1h_1\otimes ((\mu^{-1}(v)\lhd \alpha(h_{22}))_{[0]}\otimes w_{[0]}))\otimes \alpha(g_2)((h_{21}\alpha^{-1}((\mu^{-1}(v)\lhd \alpha(h_{22}))_{[1]}))w_{[1]})\\
&=&(g_1h_1\otimes ((\mu^{-1}(v)\lhd \alpha(h_2)_2)_{[0]}\otimes w_{[0]}))\otimes \alpha(g_2)(\alpha^{-1}(\alpha(h_2)_1(\mu^{-1}(v)\lhd \alpha(h_2)_2)_{[1]})w_{[1]})\\
&=&(g_1h_1\otimes(\mu^{-1}(v)_{[0]}\lhd \alpha^{-1}(\alpha(h_2)_1)\otimes w_{[0]}))\otimes \alpha(g_2)((\mu^{-1}(v)_{[1]}\alpha^{-1}(\alpha(h_2)_2))w_{[1]})\\
&=&(g_1h_1\otimes(\mu^{-1}(v_{[0]})\lhd h_{21}\otimes w_{[0]}))\otimes \alpha(g_2)((\alpha^{-1}(v_{[1]})h_{22})w_{[1]})\\
&=&(g_1\alpha(h_{11})\otimes(\mu^{-1}(v_{[0]})\lhd h_{12}\otimes w_{[0]}))\otimes \alpha(g_2)((\alpha^{-1}(v_{[1]})\alpha^{-1}(h_2))w_{[1]})\\
&=&(g_1\alpha(h_{11})\otimes(\mu^{-1}(v_{[0]})\lhd h_{12}\otimes w_{[0]}))\otimes (g_2v_{[1]})(h_2w_{[1]})\\
&=&(\varphi_2(V,W)\otimes id_H)(((g_1\otimes v_{[0]})\otimes_H(h_1\otimes w_{[0]}))\otimes(g_2v_{[1]})(h_2w_{[1]}))\\
&=&(\varphi_2(V,W)\otimes id_H)(\sigma^{Q}((g\otimes v)\otimes_H(h\otimes w))),
\end{eqnarray*}
where we have used the twisted Yetter-Drinfeld condition in the seventh equality.
\end{proof}

\begin{corollary}Let $(H,\alpha)$ be a monoidal Hom-Hopf algebra. The categories $^{H}_{H}\widetilde{\mathcal{H}}(\mathcal{M}_k)_H^H$ and $\widetilde{\mathcal{H}}(\mathcal{YD})_H^H$ are equivalent as prebraided monoidal categories. The tensor equivalence between them is braided whenever $(H,\alpha)$ has a bijective antipode.
\end{corollary}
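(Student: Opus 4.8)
The plan is to observe that the preceding theorem already supplies a \emph{tensor} equivalence $(F,\varphi_0,\varphi_2)$ between $\widetilde{\mathcal{H}}(\mathcal{YD})_H^H$ and $^{H}_{H}\widetilde{\mathcal{H}}(\mathcal{M}_k)_H^H$, with $F=(H\otimes-)$ and $\varphi_2(M,N)((g\otimes m)\otimes_H(h\otimes n))=g\alpha(h_1)\otimes(\mu^{-1}(m)\lhd h_2\otimes n)$; since a (pre-)braided monoidal equivalence is nothing but a (pre-)braided strong monoidal functor which is an equivalence, the only thing left to check is that $F$ intertwines the two (pre-)braidings. Concretely, I would prove the single coherence square
\[(id_H\otimes c^{\mathcal{YD}}_{M,N})\circ\varphi_2(M,N)=\varphi_2(N,M)\circ c_{H\otimes M,H\otimes N},\]
where $c^{\mathcal{YD}}$ is the prebraiding of Proposition (\ref{braiding-Y-D-category}) and $c$ is the prebraiding (\ref{braiding_bicov_Hom_bimod}) on the bicovariant Hom-bimodules $F(M)=H\otimes M$ and $F(N)=H\otimes N$.

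First I would reduce to spanning elements $(g\otimes m)\otimes_H(h\otimes n)$ of $(H\otimes M)\otimes_H(H\otimes N)$. On the right-hand side this is immediate: $\varphi_2(M,N)$ yields $g\alpha(h_1)\otimes(\mu^{-1}(m)\lhd h_2\otimes n)$, to which $id_H\otimes c^{\mathcal{YD}}_{M,N}$ is applied via $c^{\mathcal{YD}}_{M,N}(m'\otimes n')=\nu(n'_{(0)})\otimes\mu^{-1}(m')\lhd n'_{(1)}$. For the left-hand side I would first rewrite $g\otimes m=g\,(1_H\otimes\nu^{-1}(m))$ and $h\otimes n=h\,(1_H\otimes\nu^{-1}(n))$ using the left Hom-action (\ref{left-Hom-action}) and the weak unit law, so that $1_H\otimes\nu^{-1}(m)$ and $1_H\otimes\nu^{-1}(n)$ are exhibited as the left Hom-coinvariants of $H\otimes M$ and $H\otimes N$; by Remark (\ref{one-to-one-Bicov-YD}) these coinvariants carry exactly the induced right $H$-action and right $H$-coaction of $M$ and $N$. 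This puts the element into the form demanded by the explicit formula (\ref{first_form_of_baraiding}) for $c$, whose companions (\ref{second_form_braiding_bicov_Hom_bimod})--(\ref{inverse_third_form_braiding_bicov_Hom_bimod}) cover the remaining spanning families. Pushing the resulting $\alpha$-, $\mu$-, $\nu$-twists through by Hom-(co)associativity, the weak (co)unit laws and the twisted Yetter-Drinfel'd condition (\ref{Yetter-Drinfeld}), both sides are brought to a common expression in $H\otimes(N\otimes M)$.

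Once this square is in hand, the tensor equivalence becomes an equivalence of prebraided monoidal categories. For the braided assertion I would then invoke Proposition (\ref{quasi_braiding_bicov_Hom_bimod}) together with Proposition (\ref{braiding-Y-D-category}): when $S$ is bijective both $c$ and $c^{\mathcal{YD}}$ are isomorphisms, so the very same square now relates two genuine braidings and the equivalence is braided monoidal.

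I expect the main obstacle to be bookkeeping rather than conceptual. The braiding $c$ on bicovariant Hom-bimodules was only written down piecewise, on the three spanning families $\{hu\otimes_H v\}$, $\{w\otimes_H zh\}$ and $\{hu\otimes_H z\}$; the delicate point is to express each $\varphi_2$-image in the correct family and to keep the coinvariant identifications of Remark (\ref{one-to-one-Bicov-YD}) consistent, so that the separate formulas reassemble precisely into $c^{\mathcal{YD}}$. Managing the $\alpha$/$\mu$/$\nu$ twists uniformly across these cases is where slips are most likely, but no idea beyond the Sweedler-index manipulations already used for the hexagon axioms should be needed.
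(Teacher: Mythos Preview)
Your plan is correct and matches the paper's own proof: both reduce the claim to the single braiding-coherence square $\varphi_2(N,M)\circ c_{F(M),F(N)}=(id_H\otimes c^{\mathcal{YD}}_{M,N})\circ\varphi_2(M,N)$ and verify it on spanning elements, invoking the bijectivity of $S$ only for the final ``braided'' upgrade. The one simplification the paper exploits that you do not is to check the square in the equivalent form $\varphi_2(W,V)\circ c_{H\otimes V,H\otimes W}\circ\varphi_2(V,W)^{-1}=id_H\otimes c^{\mathcal{YD}}_{V,W}$ and start from an element $h\otimes(v\otimes w)$ on the right: then $\varphi_2(V,W)^{-1}$ produces the single element $(\alpha^{-1}(h)\otimes v)\otimes_H(1_H\otimes w)$, already of the form $hu\otimes_H v'$ with $u,v'$ left Hom-coinvariants, so one direct application of the global formula (\ref{braiding_bicov_Hom_bimod}) suffices and the piecewise bookkeeping you anticipate over the three spanning families never arises.
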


\begin{proof} It suffices to regard the case of bicovariant $(H,\alpha)$-Hom-bimodules $(M,\mu')=(H\otimes V,\alpha\otimes\mu)$ and $(N,\nu')=(H\otimes W,\alpha\otimes\nu)$ with $(V,\mu)$ and $(W,\nu)$ $(H,\alpha)$-Hom-Yetter-Drinfeld modules. Thus for $h\in H$, $v\in V$ and $w \in W$ we have

\begin{eqnarray*}&&(\varphi_2(W,V)\circ c_{M,N}\circ\varphi_2(V,W)^{-1})(h\otimes(v\otimes w))\\
&=&\varphi_2(W,V)(c_{M,N}( (\alpha^{-1}(h)\otimes v)\otimes_H(1_H\otimes w)))\\
&=&\varphi_2(W,V)(h_1((1_H\otimes w_{[0][0]})S(\alpha(w_{[0][1]})))\otimes_H (S(h_{21})(\alpha^{-1}(h_{22})\otimes\mu^{-2}(v)))\alpha(w_{[1]}))\\
&=&\varphi_2(W,V)(h_1((1_H\otimes \nu^{-1}(w_{[0]}))S(\alpha(w_{[1]1})))\otimes_H (\alpha^{-1}(S(h_{21})h_{22})\otimes\mu^{-1}(v))\alpha^{2}(w_{[1]2}))\\
&=&\varphi_2(W,V)((h_1\varepsilon(h_2))((1_H\otimes \nu^{-1}(w_{[0]}))S(\alpha(w_{[1]1})))\otimes_H (1_H\otimes\mu^{-1}(v))\alpha^{2}(w_{[1]2}))\\
&=&\varphi_2(W,V)((\alpha^{-2}(h)(1_H\otimes \nu^{-1}(w_{[0]})))S(\alpha^{2}(w_{[1]1}))\otimes_H (1_H\otimes\mu^{-1}(v))\alpha^{2}(w_{[1]2}))\\
&=&\varphi_2(W,V)((\alpha^{-3}(h)1_H\otimes\nu(\nu^{-1}(w_{[0]})))S(\alpha^{2}(w_{[1]1}))\otimes_H (1_H\alpha^{2}(w_{[1]2})_1\otimes\mu^{-1}(v)\lhd\alpha^{2}(w_{[1]2})_2))\\
&=&\varphi_2(W,V)((\alpha^{-2}(h)\otimes w_{[0]})S(\alpha^{2}(w_{[1]1}))\otimes_H (\alpha^{3}(w_{[1]21})\otimes\mu^{-1}(v)\lhd\alpha^{2}(w_{[1]22})))\\
&=&\varphi_2(W,V)((\alpha^{-1}(h)\otimes \nu(w_{[0]}))\otimes_H S(\alpha^{2}(w_{[1]1}))(\alpha^{2}(w_{[1]21})\otimes\mu^{-2}(v)\lhd\alpha(w_{[1]22})))\\
&=&\varphi_2(W,V)((\alpha^{-1}(h)\otimes \nu(w_{[0]}))\otimes_H (S(\alpha(w_{[1]1}))\alpha^{2}(w_{[1]21})\otimes\mu^{-1}(v)\lhd\alpha^{2}(w_{[1]22})))\\
&=&\varphi_2(W,V)((\alpha^{-1}(h)\otimes \nu(w_{[0]}))\otimes_H (S(\alpha^{2}(w_{[1]11}))\alpha^{2}(w_{[1]12})\otimes\mu^{-1}(v)\lhd\alpha(w_{[1]2})))\\
&=&\varphi_2(W,V)((\alpha^{-1}(h)\otimes \nu(w_{[0]}))\otimes_H (\alpha^{2}(S(w_{[1]11})w_{[1]12})\otimes\mu^{-1}(v)\lhd\alpha(w_{[1]2})))\\
&=&\varphi_2(W,V)((\alpha^{-1}(h)\otimes \nu(w_{[0]}))\otimes_H (1_H\otimes\mu^{-1}(v)\lhd\alpha(\varepsilon(w_{[1]1})w_{[1]2})))\\
&=&\varphi_2(W,V)((\alpha^{-1}(h)\otimes \nu(w_{[0]}))\otimes_H (1_H\otimes\mu^{-1}(v)\lhd w_{[1]}))\\
&=&\alpha^{-1}(h)1_H\otimes(\nu^{-1}(\nu(w_{[0]}))\lhd 1_H\otimes \mu^{-1}(v)\lhd w_{[1]})\\
&=&h\otimes(\nu(w_{[0]})\otimes\mu^{-1}(v)\lhd w_{[1]})\\
&=&(id_H\otimes c_{V,W})(h\otimes(v\otimes w)),
\end{eqnarray*}
which demonstrates that $F$ is a (pre-)braided tensor equivalence.
\end{proof}

\section{Acknowledgments}
The author would like to thank Professor Christian Lomp for his helpful suggestions and valuable comments. This research was funded by the European Regional Development Fund through the programme COMPETE and by the Portuguese Government through the FCT- Fundação para a Ciência e a Tecnologia under the project PEst-C/MAT/UI0144/2013. The author was supported by the grant SFRH/BD/51171/2010.

\end{document}